\DeclarePairedDelimiter\ceil{\lceil}{\rceil}
\DeclarePairedDelimiter\floor{\lfloor}{\rfloor}
\definecolor{cadmiumgreen}{rgb}{0.0, 0.42, 0.24}
\newtheorem{theorem}{Theorem}[section]
\newtheorem{lemma}[theorem]{Lemma}
\newtheorem{proposition}[theorem]{Proposition}
\theoremstyle{definition}
\newtheorem{remark}[theorem]{Remark}
\numberwithin{equation}{section}
\title{\bfseries{On the minimum spectral radius of connected graphs of given order and size}}
\author{Sebastian M. Cioab\u{a}}
\affil{University of Delaware}
\author{Vishal Gupta}
\affil{University of Delaware}
\author{Celso Marques}
\affil{CEFET-RJ }
\date{\today}
\begin{document}
\maketitle

\begin{center}
    {\em Dedicated to Professor Nair Maria Abreu}
\end{center}

\noindent
\begin{abstract}
In this paper, we study a question of Hong from 1993 related to the minimum spectral radii of the adjacency matrices of connected graphs of given order and size. Hong asked if it is true that among all connected graphs of given number of vertices $n$ and number of edges $e$, the graphs having minimum spectral radius (the minimizer graphs) must be almost regular, meaning that the difference between their maximum degree and their minimum degree is at most one. In this paper, we answer Hong's question positively for various values of $n$ and $e$ and in several cases, we determined the graphs with minimum spectral radius.
\end{abstract}

\tableofcontents

\section{Introduction}
Our graph notation is standard, see \cite{BH} for undefined terms or notations. The {\em eigenvalues} of a graph $G=(V,E)$ are the eigenvalues of its adjacency matrix $A=A(G)$. The largest eigenvalue of $G$, also called the \emph{spectral radius} or \emph{index} of the graph is related to the maximum degree, the average degree and other combinatorial parameters of the graph
and has been studied by many researchers in both theoretical and applied aspects (see \cite{CveRow2, SteBook, WCWF}). We denote the spectral radius of $G$ by $\rho(G)$ or by $\rho$ when the underlying graph is obvious. The maximum degree of $G$ is denoted by $\Delta(G)$ and its minimum degree by $\delta(G)$. For natural numbers $n$ and $e$, let $\mathcal{G}_{n,e}$ denote the set of all connected, undirected and simple graphs on $n$ vertices with $e$ edges. In 1993, Hong \cite{Hong} asked the following question: 
\begin{center}
{\em If $G\in \mathcal{G}_{n,e}$ has the smallest spectral radius among all graphs in $\mathcal{G}_{n,e}$, then is it true that $\Delta(G)-\delta(G)\leq 1$?}
\end{center}
We denote the smallest spectral radius among all graphs in $\mathcal{G}_{n,e}$ by $\rho_{min}(n,e)$ and we refer to any graph in $\mathcal{G}_{n,e}$ whose spectral radius equals $\rho_{min}(n,e)$ as a minimizer graph or minimizer of $\mathcal{G}_{n,e}$.

In 1957, Collatz and Sinogowitz \cite{CollSin} (cf. \cite{CveRow}) proved that the path $P_n$ is the unique minimizer graph among trees on $n$ vertices and therefore, is the minimizer of $\mathcal{G}_{n,n-1}$ (see also 
Lov\'asz and Pelik\'an \cite{LP}). By the Perron-Frobenius Theorem, this implies that $P_n$ is the unique minimizer graph among all simple connected graphs on $n$ vertices. In 1989, Simi\'{c} \cite{Simic2} proved that $B(k,n+1-2k, k)$ and $P(k, n+1-2k,k)$ (see Figure \ref{bicyclic minimizers}), where $k = \lceil\frac{n}{3}\rceil$, are the minimizers of $\mathcal{G}_{n,n}$. The graphs in $\mathcal{G}_{n,n}$ are sometimes referred to as unicyclic graphs while the graphs in $\mathcal{G}_{n,n+1}$ are called bicyclic graphs. Recently, in \cite{Stanic}, it was proved that for any natural number $s$, the complete bipartite graph $K_{s,s+1}$ is the unique minimizer graph in $\mathcal{G}_{2s+1,s(s+1)}$. We obtained this result independently around the same time (see Proposition \ref{e=s(s+1)}).
 \begin{figure}[h]
 \centering
\begin{tikzpicture}[scale = 0.6]
  \draw (4.8,0)--(6,0);
  \draw (0.8,-0.6)--(1.3,-1)--(1.8,-0.6)--(2,0)--(1.8,0.6)--(1.3,1)--(0.8,0.6);
  \draw (2,0)--(3.2,0);
  \draw[dotted](0.8,0.6) to[bend right =20] (0.8,-0.6);
   \draw (7.2,-0.6)--(6.7,-1)--(6.2,-0.6)--(6,0)--(6.2,0.6)--(6.7,1)--(7.2,0.6);
   \draw[dotted](7.2,0.6)to[bend left=20](7.2,-0.6);
  \draw[dotted](3,0)--(4.8,0);
  \foreach \x in {(1.3,1),(0.8,0.6),(1.8,0.6), (1.3,-1),(1.8,-0.6),(0.8,-0.6),(2,0),(3.2,0), (4.8,0),(6,0),(7.2,-0.6),(7.2,0.6),(6.7,-1),(6.7,1),(6.2,-0.6),(6.2,0.6)} {
  \draw[fill] \x circle[radius = 0.05cm];}
  \node[below] at (4,0) {\tiny q};
   \node[below] at (1.3,-1.1) {\tiny p};
    \node[below] at (6.7,-1.1) {\tiny r};
 
  \end{tikzpicture}
  \hspace{2cm}
  \begin{tikzpicture}[scale =0.6]
    \draw[dotted](-1,0)--(1,0);
    \draw (-1.2,-1)--(-2,0)--(-1.2,1);
    \draw[dotted] (-1.2,1) to[bend left=20](1.2,1);
    \draw (-2,0)--(-1,0);
    \draw[dotted](-1.2,-1) to[bend right=20] (1.2,-1);
    \draw (1.2,-1)--(2,0)--(1.2,1);
    \draw (1,0)--(2,0);
      \foreach \x in {(-2,0),(-1,0),(2,0),(1,0),(-1.2,1),(-1.2,-1),(1.2,-1),(1.2,1)}
      {\draw[fill] \x circle[radius = 0.05cm];}
      \node[below] at (0,0) {\tiny q};
      \node[below] at (0,1.1) {\tiny p};
       \node[below] at (0,-1.2) {\tiny r};
  \end{tikzpicture}
  \caption{$B(p,q,r)$ (left) and $P(p,q,r)$ (right).}
  \label{bicyclic minimizers}
  \end{figure}
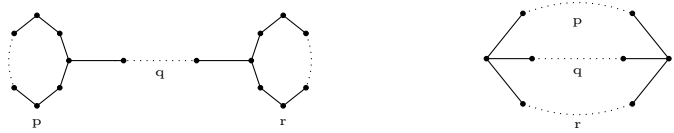
  
In this paper, we answer Hong's question affirmatively for dense graphs when $e\geq {n-1\choose 2}-2$ (see Propositions \ref{n-2 to n-1}, \ref{e=n-1 choose 2}, \ref{p=n+1/2}, \ref{p=n+2/2}, \ref{n-2 to n-3}, \ref{n-2 to n-3 2} and \ref{n-3 regular -1}) and for some sporadic cases when $e = \frac{n^2}{4}-1$ (via Proposition \ref{min for e=n^2/4-1}) or $e=\frac{n^2}{3}-1$ (see Proposition \ref{minimizer for e=n^2/3-1}). In all these cases, we determined the minimizer graphs. We end with a discussion on some observations and future directions. 

The corresponding maximizing problem, namely finding the maximum spectral radius and the structural properties of graphs attaining the maximum spectral radius among the graphs in $\mathcal{G}_{n,e}$, has been studied by several researchers (see \cite{BLS, Amit, Brualdihoffman, BruSol, CveRow} for example).

\section{Preliminaries}

In this section, we introduce the notations and some results from the literature that will be used in the subsequent sections. Let $G=(V,E)$ be a graph. For $u,v\in V$, we denote their adjacency and non adjacency in $G$ by $u\sim v$ and $u\not\sim v$, respectively. For a vertex $v\in V$, we denote its degree by $d(v)$ and the set of all vertices adjacent to $v$ by $N(v)$. Let $d = (d_1,\ldots, d_n)$ be the degree sequence of $G$. For $1\leq p\leq \infty$, the $p$-mean of $d$ is defined as $ d^{(p)}= (\frac{1}{n}\sum_{i=1}^n d_i^p)^{1/p}.$ In \cite{Hof} the author defined \emph{the spectral mean characteristic} of $G$, denoted by $char_{\rho}(G)$, to be the unique constant $1\leq p\leq \infty$ such that $\rho$ equals the $p$-mean of its degree  sequence $d$.  If $G$ is regular, then $char_{\rho}(G) =1$.

\begin{lemma}[\cite{Hof}]\label{increasing p}
The $p$-mean $d^{(p)}$, considered as a function of $p$, is continuous and strictly increasing if and only if there exists $i,j\in [n]$ such that $d_i\not=d_j.$  
\end{lemma}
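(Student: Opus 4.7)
The plan is to prove the biconditional directly. The reverse direction is trivial: if $d_i = d$ for all $i$, then $d^{(p)} = d$ for every $p \in [1,\infty]$, so $p \mapsto d^{(p)}$ is constant and certainly not strictly increasing. For the forward direction, I assume some $d_i \neq d_j$, so in particular $\Delta := \max_i d_i > 0$, and I prove continuity and strict monotonicity separately.

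For continuity on $[1,\infty)$: each term $d_i^{p}$ is a continuous function of $p$ (either $e^{p\log d_i}$ if $d_i > 0$, or the zero function if $d_i = 0$), and $\frac{1}{n}\sum_i d_i^{p} > 0$, so
$$d^{(p)} \;=\; \left(\frac{1}{n}\sum_{i=1}^{n} d_i^{p}\right)^{1/p}$$
is a continuous composition. Continuity at $p = \infty$ reduces to the sandwich $\bigl(\tfrac{k}{n}\bigr)^{1/p}\Delta \leq d^{(p)} \leq \Delta$, where $k$ is the multiplicity of $\Delta$ in the degree sequence; letting $p \to \infty$ gives $d^{(p)} \to \Delta = d^{(\infty)}$.

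For strict monotonicity I would invoke the classical power mean inequality via Jensen. Given $1 \leq p < q < \infty$, the function $\varphi(x) = x^{q/p}$ is strictly convex on $[0,\infty)$ since $q/p > 1$, and the values $d_1^{p}, \ldots, d_n^{p}$ are not all equal (as $x \mapsto x^p$ is injective on $[0,\infty)$ and not all $d_i$ coincide). Jensen's inequality therefore gives
$$\frac{1}{n}\sum_{i=1}^{n} d_i^{q} \;=\; \frac{1}{n}\sum_{i=1}^{n} \varphi(d_i^{p}) \;>\; \varphi\!\left(\frac{1}{n}\sum_{i=1}^{n} d_i^{p}\right) \;=\; \left(\frac{1}{n}\sum_{i=1}^{n} d_i^{p}\right)^{q/p},$$
and taking $q$-th roots yields $d^{(q)} > d^{(p)}$. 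The extension to $q = \infty$ is direct: $d^{(p)} = (\frac{1}{n}\sum_i d_i^p)^{1/p} < (\frac{1}{n}\sum_i \Delta^p)^{1/p} = \Delta = d^{(\infty)}$ since not every $d_i$ equals $\Delta$.

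The result is essentially the classical power mean inequality plus continuity of power means in the exponent, so I do not anticipate any serious obstacle; the only technical care is handling the endpoint $p = \infty$, which is dispatched by the sandwich argument above.
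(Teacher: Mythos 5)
Your proof is correct. Note that the paper does not supply its own argument for this lemma: it is quoted verbatim from Hofmeister's paper \cite{Hof} and used as a black box, so there is nothing internal to compare against. Your route is the standard one — the reverse direction by observing that a constant degree sequence gives a constant $p$-mean, continuity on $[1,\infty)$ from continuity of $p\mapsto d_i^p$ together with positivity of the average, continuity at $p=\infty$ by the sandwich $(k/n)^{1/p}\Delta\leq d^{(p)}\leq\Delta$, and strict monotonicity from strict Jensen applied to the strictly convex map $x\mapsto x^{q/p}$ evaluated at the (not all equal) values $d_i^p$, with the endpoint $q=\infty$ handled by the strict bound $\tfrac{1}{n}\sum_i d_i^p<\Delta^p$. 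All steps check out, including the two places where strictness must be justified (injectivity of $x\mapsto x^p$ so that the $d_i^p$ are not all equal, and the strict inequality in the comparison with $\Delta$), so this is a complete and self-contained replacement for the citation.
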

\begin{theorem}[\cite{Hof}]\label{charG}
For an irregular graph $G$, $char_{\rho}(G)\geq 2$.  
\end{theorem}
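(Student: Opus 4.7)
The plan is to compare $\rho(G)$ with $d^{(2)}(G)$ directly, then invoke Lemma \ref{increasing p}. Concretely, I would first establish the inequality $\rho(G)\ge d^{(2)}(G)$ for every graph, and then deduce $\operatorname{char}_\rho(G)\ge 2$ from the strict monotonicity of $p\mapsto d^{(p)}$ guaranteed by Lemma \ref{increasing p} when $G$ is irregular.

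For the main inequality I would use the Rayleigh quotient applied to $A^2$. Since $A=A(G)$ is symmetric, $A^2$ is positive semidefinite with spectral radius $\rho^2$, so
\[
\rho^2 \;=\; \max_{x\neq 0}\frac{x^T A^2 x}{x^T x} \;\ge\; \frac{\mathbf{1}^T A^2 \mathbf{1}}{\mathbf{1}^T\mathbf{1}} \;=\; \frac{\|A\mathbf{1}\|^2}{n} \;=\; \frac{1}{n}\sum_{i=1}^n d_i^2 \;=\; \bigl(d^{(2)}\bigr)^2,
\]
where I used the elementary identity $(A\mathbf{1})_i=d_i$. Taking square roots gives $\rho\ge d^{(2)}$ for every graph (regular or not).

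To finish, recall that $\operatorname{char}_\rho(G)$ is by definition the unique $p\in[1,\infty]$ with $d^{(p)}=\rho$; the uniqueness requires irregularity, which is exactly our hypothesis. By Lemma \ref{increasing p} the map $p\mapsto d^{(p)}$ is strictly increasing on $[1,\infty]$ in the irregular case, so the inequality $d^{(\operatorname{char}_\rho(G))}=\rho\ge d^{(2)}$ forces $\operatorname{char}_\rho(G)\ge 2$, as claimed.

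There is no serious obstacle here: the single substantive step is the Rayleigh quotient bound for $A^2$, which is a standard Hofmeister-type computation. One slightly subtle point worth noting (but not needed for the proof) is that the inequality $\rho\ge d^{(2)}$ can be an equality for biregular but irregular graphs such as the complete bipartite graphs $K_{m,n}$ with $m\ne n$, where $\rho(K_{m,n})=\sqrt{mn}=d^{(2)}(K_{m,n})$; this shows that the bound $\operatorname{char}_\rho(G)\ge 2$ cannot be improved to a strict inequality in general, so the statement of Theorem \ref{charG} is sharp.
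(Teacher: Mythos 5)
Your argument is correct: the Rayleigh quotient bound $\rho^2\ge \mathbf{1}^TA^2\mathbf{1}/n=(d^{(2)})^2$ gives $\rho\ge d^{(2)}$, and combined with the strict monotonicity of $p\mapsto d^{(p)}$ from Lemma \ref{increasing p} (which applies precisely because $G$ is irregular) this yields $char_{\rho}(G)\ge 2$. The paper states this theorem as a cited result from \cite{Hof} and gives no proof of its own, so there is nothing to compare against; your argument is the standard Hofmeister computation, and your remark that equality $\rho=d^{(2)}$ occurs for semiregular graphs such as $K_{m,n}$ is consistent with Theorem \ref{semiregular}.
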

\begin{theorem}[\cite{Hof}]\label{semiregular}
 If $G$ is semiregular, then $char_{\rho}(G)=2.$ If $G$ is connected and $char_{\rho}(G) =2,$ then $G$ is semiregular.   
\end{theorem}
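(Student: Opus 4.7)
The plan is to prove the two implications separately. For the forward direction, suppose $G$ is semiregular bipartite with parts $X$ and $Y$ of sizes $n_1, n_2$ and vertex degrees $r_1$ on $X$ and $r_2$ on $Y$. Using $n_1 r_1 = n_2 r_2 = |E(G)|$ and the classical fact $\rho(G) = \sqrt{r_1 r_2}$, I would substitute $r_i = |E|/n_i$ into
\[
(d^{(2)})^2 = \frac{n_1 r_1^2 + n_2 r_2^2}{n_1+n_2} = \frac{|E|(r_1+r_2)}{n_1+n_2},
\]
and simplify to $|E|^2/(n_1 n_2) = r_1 r_2 = \rho^2$. Thus $d^{(2)} = \rho$, and assuming $r_1 \neq r_2$ (otherwise $G$ is regular and the claim is vacuous given the paper's convention), Lemma \ref{increasing p} ensures the $p$ with $d^{(p)} = \rho$ is unique, giving $char_{\rho}(G) = 2$.

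For the converse, let $G$ be connected with $char_{\rho}(G) = 2$; since regular graphs are assigned $char_{\rho} = 1$, $G$ is irregular. The key identity
\[
(d^{(2)})^2 = \frac{1}{n}\sum_{v}d(v)^2 = \frac{\mathbf{1}^T A^2 \mathbf{1}}{\mathbf{1}^T \mathbf{1}}
\]
rephrases the hypothesis $\rho = d^{(2)}$ as saying that $\mathbf{1}$ attains the maximum Rayleigh quotient $\rho^2 = \lambda_{\max}(A^2)$ of the positive semidefinite matrix $A^2$. Equality in Rayleigh's principle then forces $A^2\mathbf{1} = \rho^2 \mathbf{1}$.

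Next I would exploit the spectral structure of $A^2$. If $G$ is non-bipartite and connected, then $-\rho$ is not an eigenvalue of $A$, so $\rho^2$ is a simple eigenvalue of $A^2$ whose only positive eigenvector (up to scaling) is the Perron eigenvector $\mathbf{x}$ of $A$; hence $\mathbf{1}$ is proportional to $\mathbf{x}$, which in turn yields $A\mathbf{1} = \rho\mathbf{1}$ and so $G$ is regular, contradicting irregularity. Therefore $G$ must be bipartite, with parts $X, Y$. Then the $\rho^2$-eigenspace of $A^2$ is two-dimensional, spanned by $\mathbf{x}$ together with the sign-flipped vector $\mathbf{x}'$ (equal to $\mathbf{x}$ on $X$ and $-\mathbf{x}$ on $Y$); equivalently, by the zero-extensions of $\mathbf{x}|_X$ and $\mathbf{x}|_Y$. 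Writing $\mathbf{1}$ in this basis forces $\mathbf{x}$ to be constant $\alpha$ on $X$ and constant $\beta$ on $Y$. Substituting into $A\mathbf{x} = \rho \mathbf{x}$ then yields $d(v) = \rho\alpha/\beta$ for every $v \in X$ and $d(u) = \rho\beta/\alpha$ for every $u \in Y$, so $G$ is semiregular.

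The main obstacle is the careful bipartite case analysis: one must verify that the $\rho^2$-eigenspace of $A^2$ decomposes cleanly along the bipartition and then translate the resulting constancy of the Perron eigenvector on each side into equality of vertex degrees on that side via the eigenvalue equation. The non-bipartite case, by contrast, is a direct application of Perron-Frobenius once the Rayleigh step is in place.
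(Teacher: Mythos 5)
Your proposal is correct and complete: the forward direction via $\rho=\sqrt{r_1r_2}$ together with Lemma \ref{increasing p}, and the converse via the Rayleigh quotient of $\mathbf{1}$ for $A^2$ followed by the Perron--Frobenius analysis of the $\rho^2$-eigenspace in the bipartite and non-bipartite cases, are all sound. Note that the paper itself gives no proof of this statement --- it is quoted from Hofmeister \cite{Hof} --- so there is nothing in the text to compare against; your argument is the standard one for this equality characterization and would serve as a valid self-contained proof.
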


If $G$ is a connected graph with adjacency matrix $A$ and spectral radius $\rho(G)$, the Perron-Frobenius implies that the eigenspace corresponding to $\rho(G)$ is $1$-dimensional and is spanned by an eigenvector with positive entries. The principal eigenvector of $G$ is the eigenvector corresponding to $\rho(G)$ that has all entries positive and has length one. 

\begin{theorem}[Theorem 8.1.3, \cite{CRS}]\label{rotation}
Let $G=(V,E)$ be a connected graph with spectral radius $\rho(G)$ and principal eigenvector $x$. Assume that $r,s$, and $t$ are vertices of $G$ such that $r\sim s$, $r\not\sim t$ and $x_t\geq x_s$. If $G'$ is the graph obtained from $G$ by deleting the edge $rs$ and adding an edge between $r$ and $t$, then $\rho(G')>\rho(G)$. 
\end{theorem}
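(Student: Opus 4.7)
The plan is to argue via the Rayleigh quotient using the principal eigenvector $x$ of $G$, showing that the perturbation of the adjacency matrix strictly increases the quadratic form, and then ruling out the edge case of equality by an eigenvector uniqueness argument.

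First, I would normalize so that $\|x\|_2 = 1$, let $A = A(G)$ and $A' = A(G')$, and write $A' - A = E_{rt} - E_{rs}$, where $E_{uv}$ denotes the symmetric 0/1 matrix with ones exactly at positions $(u,v)$ and $(v,u)$. A direct computation gives
\[
x^\top A' x - x^\top A x \;=\; 2x_r\bigl(x_t - x_s\bigr) \;\geq\; 0,
\]
using the hypothesis $x_t \geq x_s$ and the Perron--Frobenius fact that $x_r > 0$ (which holds since $G$ is connected). Combining this with the variational characterization $\rho(G') \geq x^\top A' x$ and the identity $\rho(G) = x^\top A x$ immediately yields $\rho(G') \geq \rho(G)$.

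The main step is to upgrade this to a strict inequality. Suppose for contradiction that $\rho(G') = \rho(G) =: \rho$. Then $x^\top A' x = \rho$, so $x$ achieves the Rayleigh quotient maximum for $A'$ and is therefore an eigenvector of $A'$ with eigenvalue $\rho$. But then $(A' - A)x = \rho x - \rho x = 0$. Looking at the $s$-th coordinate of $(A' - A)x$, only the $(s,r)$ entry changes (the edge $rs$ is deleted), so $((A'-A)x)_s = -x_r$. Since $x_r > 0$, this contradicts $(A'-A)x = 0$.

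The only subtlety I anticipate is justifying that $x$ being a nonnegative eigenvector of $A'$ forces equality with the spectral radius; this is handled cleanly by the Rayleigh quotient reasoning above, avoiding any need to know whether $G'$ is connected. Everything else is routine linear algebra together with the positivity of the Perron eigenvector of the connected graph $G$.
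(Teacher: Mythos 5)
The paper does not prove this statement; it is quoted as Theorem 8.1.3 of \cite{CRS} and used as a black box. Your argument is correct and is essentially the standard proof of that theorem: the Rayleigh quotient computation $x^\top A'x - x^\top Ax = 2x_r(x_t - x_s) \geq 0$ gives $\rho(G') \geq \rho(G)$, and your strictness step is sound --- equality forces $x$ to be a $\rho$-eigenvector of $A'$ as well, and then the $s$-th coordinate of $(A'-A)x$ equals $-x_r < 0$ (note $s \neq t$ since $r \sim s$ but $r \not\sim t$), contradicting $(A'-A)x = 0$; the positivity of $x_r$ is exactly where connectivity of $G$ and Perron--Frobenius enter, and you correctly avoid needing $G'$ to be connected.
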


\begin{theorem}[Theorem 8.1.10, \cite{CRS}]\label{local switching}
Let $G=(V,E)$ be a connected graph with spectral radius $\rho(G)$ and corresponding principal eigenvector $x$. Assume that $r, s, u$, and $v$ are vertices of $G$ such that $s\sim t,u\sim v, s\not\sim v, t\not\sim u$ and $(x_s-x_u)(x_v-x_t)\geq 0$. Let $G'$ be the graph obtained from $G$ by replacing the edges $st$ and $uv$ by $sv$ and $tu$, then $\rho(G')\geq \rho(G)$. Equality happens if and only if $x_s = x_u$ and $x_v = x_t$.   
\end{theorem}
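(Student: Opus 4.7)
The plan is to prove this via the Rayleigh quotient, following the standard variational argument. Recall that for a symmetric matrix $A$ of a connected graph and any unit vector $y$, we have $y^{T}Ay \le \rho(A)$, with equality if and only if $y$ is a (Perron) eigenvector for $\rho(A)$. I will apply this with $A = A(G')$ and $y = x$, the principal eigenvector of $G$.

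First, I would compute the difference in quadratic forms. Since $A(G')$ differs from $A(G)$ only by removing edges $st,uv$ and adding edges $sv,tu$, a direct expansion gives
\begin{equation*}
x^{T}A(G')x - x^{T}A(G)x \;=\; 2\bigl(x_{s}x_{v} + x_{t}x_{u} - x_{s}x_{t} - x_{u}x_{v}\bigr) \;=\; 2(x_{s}-x_{u})(x_{v}-x_{t}).
\end{equation*}
By hypothesis this quantity is nonnegative, so $x^{T}A(G')x \ge x^{T}A(G)x = \rho(G)\|x\|^{2}$. Since $\|x\|=1$, the Rayleigh characterization of $\rho(G')$ yields $\rho(G') \ge x^{T}A(G')x \ge \rho(G)$.

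For the equality case, the forward direction is immediate: if $x_{s}=x_{u}$ and $x_{v}=x_{t}$, then the displayed difference vanishes and, moreover, coordinate-by-coordinate checking shows $A(G')x = A(G)x = \rho(G)x$; since $x$ has positive entries, Perron-Frobenius identifies $\rho(G)$ as $\rho(G')$. The harder direction, which I would treat as the main obstacle, is the converse. Assume $\rho(G') = \rho(G)$. Then $x^{T}A(G')x = \rho(G')\|x\|^{2}$, so $x$ attains the Rayleigh maximum for $A(G')$, forcing $x$ to be a $\rho(G')$-eigenvector of $A(G')$: $A(G')x = \rho(G)x = A(G)x$. Comparing the two sides at the coordinates that changed, the $s$-th coordinate gives $x_{v}-x_{t}=0$ and the $t$-th coordinate gives $x_{u}-x_{s}=0$, which is the required conclusion.

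The one subtlety to be careful about is that the Rayleigh equality argument for $G'$ requires $G'$ to still admit a Perron-style variational characterization, which is fine because $A(G')$ is symmetric even if $G'$ has become disconnected; in that case $x$ is still a nonnegative eigenvector for the largest eigenvalue of $A(G')$ on its component. Apart from this, the argument is entirely algebraic and the crucial identity is the factorization $(x_{s}-x_{u})(x_{v}-x_{t})$ that makes the sign hypothesis line up with the change in the quadratic form.
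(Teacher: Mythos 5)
Your argument is correct and is the standard Rayleigh-quotient proof of this switching lemma; note that the paper itself gives no proof, since the statement is quoted directly from \cite{CRS} (Theorem 8.1.10). The factorization $x_sx_v+x_tx_u-x_sx_t-x_ux_v=(x_s-x_u)(x_v-x_t)$, the resulting inequality $\rho(G')\geq x^TA(G')x\geq x^TA(G)x=\rho(G)$, and the equality analysis via the eigenvector equation at the coordinates $s,t,u,v$ are exactly the expected steps, and your remark that the variational characterization of the largest eigenvalue of the symmetric matrix $A(G')$ does not require $G'$ to be connected correctly handles the only delicate point.
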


Given a graph $G=(V,E)$ and a natural number $k$, a partition $\pi$ of its vertex $V=X_1\cup \ldots X_k$ into $k$ parts is called equitable if there exists non-negative integers $b_{i,j}$ for $1\leq i,j\leq k$, such that for any $i,j\in \{1,\ldots,k\}$ and any vertex $u\in X_i$, the number of neighbors of $u$ that are contained in $X_j$ equals $b_{i,j}$. The $k\times k$ matrix $B=(b_{i,j})_{1\leq i,j\leq k}$ is called the quotient matrix of the equitable partition $\pi$. The following result is well-known (see \cite[Section 2.3]{BH} or \cite[Section 9.3]{GR}).
\begin{theorem}
If $\pi$ is an equitable partition of a graph $G=(V,E)$ with quotient matrix $B$, then any eigenvalue of $B$ is an eigenvalue of $A$. 
\end{theorem}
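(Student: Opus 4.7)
The plan is the standard characteristic-matrix argument. I would introduce the $n\times k$ characteristic matrix $P$ of the partition $\pi=\{X_1,\ldots,X_k\}$, defined by $P_{v,i}=1$ if $v\in X_i$ and $P_{v,i}=0$ otherwise. The key observation, which is really just a restatement of the equitable-partition hypothesis, is the matrix identity
\[
AP=PB.
\]
Indeed, the $(v,j)$-entry of $AP$ counts the neighbours of $v$ that lie in $X_j$, which by definition of an equitable partition equals $b_{i,j}$ whenever $v\in X_i$; on the other hand the $(v,j)$-entry of $PB$ is $b_{i,j}$ for $v\in X_i$. So both matrices agree entry-by-entry. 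I would write out one line to make this explicit.

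Given the identity $AP=PB$, the rest is immediate linear algebra. Let $\lambda$ be any eigenvalue of $B$ with eigenvector $y\in\mathbb{R}^k\setminus\{0\}$. Then
\[
A(Py)=(AP)y=(PB)y=P(By)=\lambda\, Py.
\]
It only remains to check that $Py\neq 0$, so that $Py$ is a genuine eigenvector of $A$ for $\lambda$. This is where I would be a little careful: $P$ has pairwise disjoint supports in its columns (each row has exactly one $1$), so its columns are linearly independent, i.e. $P$ has full column rank $k$. Hence $\ker P=\{0\}$, and $Py\neq 0$ follows from $y\neq 0$.

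The conclusion is then that $\lambda$ is an eigenvalue of $A$, as desired. There is no real obstacle in this proof; the only subtlety is verifying that $P$ has trivial kernel, which I would state cleanly as the observation that the $X_i$ are nonempty and disjoint so the columns of $P$ are nonzero and orthogonal.
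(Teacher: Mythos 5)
Your proof is correct and is precisely the standard characteristic-matrix argument ($AP=PB$, then $Py$ is a $\lambda$-eigenvector of $A$ since $P$ has full column rank); the paper does not prove this statement itself but cites it as well-known from \cite{BH} and \cite{GR}, where exactly this argument appears.
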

We will use the following consequence of the result above (which follows using the Perron-Frobenius theorem, see \cite[p.214, Ex. 8] {GR}).
\begin{theorem}\label{thm:eq_sr}
Let $G$ be a connected graph. If $\pi$ is an equitable partition of $G$ with quotient matrix $B$, then the spectral radius of $G$ equals the spectral radius of $B$.
\end{theorem}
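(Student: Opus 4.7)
The plan is to establish the two inequalities $\rho(B) \leq \rho(G)$ and $\rho(G) \leq \rho(B)$ separately, using the preceding theorem on equitable partitions together with the Perron--Frobenius theorem for nonnegative matrices.

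For the upper bound on $\rho(B)$, first observe that $B$ is entrywise nonnegative, since its entries are the nonnegative integers $b_{i,j}$. By Perron--Frobenius, $\rho(B)$ is itself an eigenvalue of $B$. The preceding theorem then guarantees that $\rho(B)$ is an eigenvalue of $A(G)$, and therefore $\rho(B) \leq \rho(G)$.

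The reverse inequality $\rho(G) \leq \rho(B)$ is the heart of the argument, and this is where the connectedness hypothesis enters. Let $x$ denote the principal eigenvector of $G$, which is strictly positive by Perron--Frobenius applied to the connected graph $G$. Define $y \in \mathbb{R}^{k}$ by $y_i = \sum_{v \in X_i} x_v$, so that $y > 0$ componentwise. Summing the eigenvalue relation $Ax = \rho(G)x$ over each part $X_i$ and swapping the order of summation yields
\[
\rho(G)\, y_i \;=\; \sum_{v \in X_i} (Ax)_v \;=\; \sum_{u \in V} x_u\,|N(u)\cap X_i| \;=\; \sum_{j=1}^{k} b_{j,i}\, y_j \;=\; (B^{T} y)_i,
\]
where the equitable condition (every vertex of $X_j$ has exactly $b_{j,i}$ neighbors in $X_i$) is used in the third equality, grouping the sum over $u$ according to which part $X_j$ contains it. Hence $B^{T} y = \rho(G)\, y$ with $y$ strictly positive.

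The remaining step, and the main subtlety, is to conclude $\rho(B) = \rho(G)$ from this relation. This follows from the standard Perron--Frobenius fact that any nonnegative square matrix admitting a strictly positive eigenvector must have the corresponding eigenvalue equal to its spectral radius; note that this does not require irreducibility of $B^{T}$, which is why the argument works even when the partition is coarse. Applying this to the nonnegative matrix $B^{T}$ with positive eigenvector $y$ gives $\rho(B) = \rho(B^{T}) = \rho(G)$, completing the proof.
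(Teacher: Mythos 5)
Your proof is correct. Note, though, that the paper does not actually prove this statement: it simply records it as a known consequence of the Perron--Frobenius theorem combined with the preceding theorem (that every eigenvalue of $B$ is an eigenvalue of $A$), citing Godsil--Royle. The argument that citation points to is the ``lift'' direction: take the Perron vector $z\ge 0$ of the nonnegative matrix $B$, observe that the characteristic matrix $P$ of the partition satisfies $AP=PB$, so $Pz$ is a nonnegative, nonzero eigenvector of $A$ for the eigenvalue $\rho(B)$; since $G$ is connected, Perron--Frobenius forces any such eigenvalue to be $\rho(G)$. You instead go in the ``push down'' direction: you project the positive principal eigenvector of $G$ onto the parts to produce a strictly positive eigenvector of $B^{T}$ with eigenvalue $\rho(G)$, and then invoke the standard fact that a nonnegative matrix with a strictly positive eigenvector has that eigenvalue as its spectral radius (provable, e.g., by conjugating with $\mathrm{diag}(y)$ to make all row sums equal to $\rho(G)$). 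Both routes use connectedness in an essential and correct way, and your computation verifying $B^{T}y=\rho(G)y$ from the equitable condition is right. Two small remarks: your first paragraph (showing $\rho(B)\le\rho(G)$) is logically redundant, since your final step already yields the equality outright; and it would be cleaner to state explicitly the diagonal-similarity justification for the ``standard fact,'' since that is the one place where a reader might worry about reducibility of $B^{T}$.
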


Another tool that we will use in our paper is eigenvalue interlacing. For a graph $G=(V,E)$, an natural number $k$ and a partition of $V$ into $k$ parts: $V=Y_1\cup \ldots \cup Y_k$, define the quotient matrix $Q$ of this partition to be $k\times k$ matrix whose $(i,j)$-th entry $q_{i,j}$ equals the average number of neighbors contained in $Y_j$ of the vertices in $Y_i$. The following result is also well-known (see \cite[Section 2.5]{BH} or \cite[Lemma 9.5]{GR} for more details).
\begin{theorem}\label{thm:int_sr}
Let $G$ be a graph. If $\pi$ is a partition with quotient matrix $Q$, then the eigenvalues of $Q$ interlace the eigenvalues of $G$ and therefore, $\rho(G)\geq \rho(Q)$. If the interlacing is tight, then the partition $\pi$ is equitable.
\end{theorem}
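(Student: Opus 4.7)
The plan is to realize the quotient matrix $Q$ as a compression of $A$ by a matrix with orthonormal columns, and then invoke Cauchy's interlacing theorem for Hermitian matrices. Concretely, let $S$ be the $n\times k$ characteristic matrix of the partition, where $S_{vi}=1$ if $v\in Y_i$ and $0$ otherwise. Then $S^TS=\mathrm{diag}(|Y_1|,\ldots,|Y_k|)$ is a positive diagonal matrix, and a direct entry-wise check shows that the $(i,j)$ entry of $(S^TS)^{-1}S^TAS$ equals $\frac{1}{|Y_i|}\sum_{u\in Y_i}|N(u)\cap Y_j|$, so $Q=(S^TS)^{-1}S^TAS$. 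Setting $\widetilde S:=S(S^TS)^{-1/2}$, the columns of $\widetilde S$ are orthonormal, and the symmetric matrix $\widetilde S^{\,T}A\widetilde S=(S^TS)^{-1/2}S^TAS(S^TS)^{-1/2}$ is similar to $Q$ via conjugation by $(S^TS)^{1/2}$, hence shares its spectrum with $Q$.

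Next, I would apply the standard Cauchy interlacing theorem: if $M$ is an $n\times n$ real symmetric matrix and $U$ is an $n\times k$ matrix with orthonormal columns, then the eigenvalues of $U^TMU$ interlace those of $M$. Taking $M=A$ and $U=\widetilde S$, this gives the asserted interlacing of the eigenvalues of $Q$ and of $G$; in particular $\rho(Q)\le\rho(G)$. This accounts for the first two assertions of the theorem.

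For the ``tight interlacing'' statement, my plan is to use the equality case of Courant-Fischer. Tight interlacing means every eigenvector of $Q$ extends (via $\widetilde S$) to an eigenvector of $A$ with the matching eigenvalue; equivalently, the column space of $\widetilde S$ (and therefore of $S$) is $A$-invariant. Thus there is a $k\times k$ matrix $B$ with $AS=SB$. Since $S^TS$ is invertible, $B=(S^TS)^{-1}S^TAS=Q$, so $AS=SQ$. Reading off this identity row by row: for a vertex $u\in Y_i$, the number of neighbors of $u$ in $Y_j$ is the $(u,j)$ entry of $AS$, which equals $(SQ)_{u,j}=Q_{ij}$, a constant depending only on $i$ and $j$. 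This is exactly the definition of an equitable partition, with $b_{ij}=Q_{ij}$.

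The main technical point, and the one I expect to need the most care, is the ``tight interlacing implies invariant subspace'' step. I would prove it by fixing an orthonormal eigenbasis $v_1,\ldots,v_k$ of $\widetilde S^{\,T}A\widetilde S$ and showing, via the variational characterization, that each $\widetilde Sv_i$ lies in the span of eigenvectors of $A$ for the matching eigenvalue; summing over $i$ yields that the range of $\widetilde S$ is $A$-invariant. This is essentially \cite[Section 2.5]{BH} or \cite[Lemma 9.5]{GR}, so I would cite those references rather than reproving it from scratch, and only fill in the final calculation $AS=SQ\Rightarrow$ equitability as above.
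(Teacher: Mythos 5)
Your proposal is correct and is precisely the standard compression-plus-Cauchy-interlacing argument; the paper itself offers no proof of this theorem, deferring to \cite[Section 2.5]{BH} and \cite[Lemma 9.5]{GR}, and those references prove it exactly as you do (normalize the characteristic matrix $S$ to $\widetilde S=S(S^TS)^{-1/2}$, note $\widetilde S^{\,T}A\widetilde S$ is similar to $Q$, apply Cauchy interlacing, and in the tight case deduce $AS=SQ$ and hence equitability). No discrepancy with the paper's treatment.
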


\section{Dense graphs}

We start this section with two simple observations. 
\begin{proposition}\label{reg}
If $G$ is a $(n,e)$-graph, then $\rho(G)\geq \frac{2e}{n}$. Equality happens if and only if $n|2e$ and $G$ is a $\frac{2e}{n}$-regular graph.
\end{proposition}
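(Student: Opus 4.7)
The plan is to invoke the Rayleigh quotient characterization of the largest eigenvalue of a symmetric matrix. Writing $\mathbf{1}$ for the all-ones vector in $\mathbb{R}^n$ and $A=A(G)$ for the adjacency matrix, the entries $A_{ij}$ sum to twice the number of edges, so
\[
\rho(G) \;=\; \max_{x\neq 0}\frac{x^{T} A x}{x^{T} x} \;\geq\; \frac{\mathbf{1}^{T} A \mathbf{1}}{\mathbf{1}^{T} \mathbf{1}} \;=\; \frac{2e}{n}.
\]
This already gives the stated inequality with essentially no work, and every vertex of the argument only uses that $A$ is real symmetric and has non-negative entries summing to $2e$.

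For the equality characterization, I would argue as follows. Suppose $\rho(G) = \frac{2e}{n}$. Then $\mathbf{1}$ attains the maximum in the Rayleigh quotient, hence lies in the eigenspace of $\rho(G)$, so $A\mathbf{1} = \rho(G)\mathbf{1}$. Reading off coordinates, every vertex of $G$ has degree exactly $\rho(G) = \frac{2e}{n}$; in particular $G$ is regular and $n$ divides $2e$. Conversely, if $G$ is $d$-regular with $d = \frac{2e}{n}$, then $A\mathbf{1} = d\,\mathbf{1}$, so $d$ is an eigenvalue of $A$ with the strictly positive eigenvector $\mathbf{1}$; by the Perron–Frobenius theorem, which applies since $G$ is connected, this eigenvalue must coincide with $\rho(G)$, completing the characterization.

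There is no real obstacle here: the proposition is essentially a direct corollary of the variational characterization of the top eigenvalue together with the well-known fact that a connected graph has the all-ones vector as a Perron eigenvector precisely when it is regular. The only point that requires a moment of care is making sure that both directions of the equality case are covered, and both follow cleanly from the preceding setup.
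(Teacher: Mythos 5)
Your proof is correct and takes essentially the same route as the paper's: the Rayleigh quotient bound applied to the all-ones vector, with equality holding precisely when $\mathbf{1}$ is an eigenvector for $\rho(G)$, i.e., when $G$ is $\frac{2e}{n}$-regular. You simply spell out both directions of the equality case (including the Perron--Frobenius step for the converse) more explicitly than the paper does.
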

\begin{proof}
If $\mathbf{1}$ denotes the all-ones column vector of dimension $n$, then $\rho(G)\geq \frac{\mathbf{1}^TA(G)\mathbf{1}}{\mathbf{1}^T\mathbf{1}} = \frac{2e}{n}.$ Equality holds if and only if $\mathbf{1}$ is an eigenvector for $\rho$, that is, if and only if, $G$ is a $\frac{2e}{n}$-regular graph.
\end{proof}
Note that for any natural numbers $n$ and $e$ with $n|2e$, there exist a $\frac{2e}{n}$-regular graph on $n$ vertices. This observation and the previous Proposition show that when $2e$ is not divisible by $n$, Hong's question concerns irregular graphs. There are several measures for graph irregularity in literature and here, we use the irregularity $Ir(G)$ of a graph $G$ which is defined by $Ir(G) = \Delta(G) - \delta(G)$. We call a graph $G$ \emph{almost regular} if $Ir(G) =1$. Hence, one can rephrase Hong's question as : 
\begin{center}
{\em Is it true that the minimizers in $\mathcal{G}_{n,e}$ are almost regular graphs?}
\end{center}
The next lemma shows that except for the trees, no minimizer can have vertices of degree one.
\begin{lemma}\label{degree1}  
If $e\geq n\geq 2$, then a minimizer graph in $\mathcal{G}_{n,e}$ does not contain a vertex of degree one.
\end{lemma}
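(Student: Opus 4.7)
I plan to argue by contradiction: if $G\in\mathcal{G}_{n,e}$ is a minimizer containing a pendant vertex $v$ with unique neighbor $u$, I aim to construct a competitor $G'\in\mathcal{G}_{n,e}$ with $\rho(G')<\rho(G)$. Write $x$ for the principal eigenvector of $G$ and $\rho=\rho(G)$. First I collect standing bounds: Proposition \ref{reg} gives $\rho\geq 2e/n\geq 2$; the eigenvalue equation at $v$ reads $\rho x_v=x_u$, so $x_u\geq 2x_v>x_v$; the hypothesis $e\geq n\geq 2$ rules out $G=K_2$, forcing $d(u)\geq 2$; and $e\geq n$ guarantees that $G$ contains a cycle.

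For the construction I relocate the pendant edge. Pick a neighbor $w\neq v$ of $u$ and set $G':=G-uw+vw$. Then $v$ has degree two in $G'$ with neighbors $u$ and $w$, and $G'$ is connected: when $uw$ is a bridge of $G$, the new edge $vw$ reconnects the two components of $G-uw$ through $v$, and when $uw$ is not a bridge $G-uw$ is already connected. Hence $G'\in\mathcal{G}_{n,e}$. Let $x'$ be the principal eigenvector of $G'$. Since $A(G)-A(G')$ is supported only on the entries indexed by $uw$ and $vw$, a direct computation gives
\[
(x')^T A(G)\,x' \;=\; \rho(G')+2x'_w(x'_u-x'_v).
\]
A coordinate-wise check rules out the possibility that $x'$ is a $\rho(G)$-eigenvector of $A(G)$ (otherwise $(A(G)-A(G'))x'=(\rho(G)-\rho(G'))x'$ would force $x'_w=0$, violating Perron--Frobenius positivity), so the Rayleigh inequality is strict:
\[
\rho(G)\;>\;\rho(G')+2x'_w(x'_u-x'_v).
\]
It therefore suffices to prove $x'_u\geq x'_v$. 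The eigenvalue equation at $v$ in $G'$ reads $\rho(G')x'_v=x'_u+x'_w$, so $x'_u\geq x'_v$ is equivalent to $(\rho(G')-1)x'_u\geq x'_w$; and Proposition \ref{reg} applied to $G'\in\mathcal{G}_{n,e}$ gives $\rho(G')\geq 2$, reducing the task to showing $x'_u\geq x'_w$.

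The main obstacle is this last comparison $x'_u\geq x'_w$ inside $G'$, because it involves the principal eigenvector of the modified graph rather than that of $G$. My plan is to secure it by choosing $w$ to be a neighbor of $u$ in $G$ minimizing $x_w$, and then comparing the eigenvalue equations of $G'$ at $u$ and at $w$ using the explicit description of their neighborhoods in $G'$, together with $d_{G'}(u)=d_G(u)-1$ and $d_{G'}(w)=d_G(w)$. The degenerate sub-case $d(u)=2$, in which $u$ itself becomes a pendant of $G'$, needs separate treatment; here I would iterate the construction along the maximal pendant path at $v$ and argue instead with its first vertex of degree at least three on that path. Once $x'_u\geq x'_w$, and hence $x'_u\geq x'_v$, has been established, the displayed strict inequality yields $\rho(G)>\rho(G')$, contradicting the minimality of $G$ and completing the proof.
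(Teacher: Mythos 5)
There is a genuine gap: the inequality $x'_u\geq x'_v$ on which your whole argument hinges is never proved, and the route you propose to it does not work. Your reduction shows $x'_u\geq x'_v$ is equivalent to $(\rho(G')-1)x'_u\geq x'_w$, and you then declare it sufficient to prove $x'_u\geq x'_w$; but this stronger inequality is simply false in easy examples. Take $G$ to be $K_4$ on $\{w,z_1,z_2,z_3\}$ together with a vertex $u$ joined to $w$ and $z_1$ and a pendant $v$ at $u$ (so $n=6$, $e=9\geq n$). Here $w$ and $z_1$ are symmetric, so your choice of $w$ minimizing $x_w$ does not help, and in $G'=G-uw+vw$ the eigenvalue equations give $x'_w=x'_u+2x'_{z_2}/\rho(G')>x'_u$. (In this example one can still check $x'_u=x'_v$, so the conclusion you want survives, but only by a computation specific to the example; your proposed mechanism --- comparing the equations at $u$ and $w$ using $d_{G'}(u)=d_G(u)-1$ and $d_{G'}(w)=d_G(w)$ --- points in the wrong direction, since $u$ is typically the \emph{low}-degree vertex of the pair.) In addition, the sub-case $d(u)=2$ is not a minor degeneracy: there $G'\cong G$ via swapping $u$ and $v$, so the move is vacuous, and the promised ``iteration along the maximal pendant path'' is a new construction whose key inequalities would all have to be established separately. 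As written, the proof establishes the easy Rayleigh-quotient identity (which is just Theorem \ref{rotation} applied to $G'$ with $r=w$, $s=v$, $t=u$) but not the hypothesis needed to invoke it.

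For comparison, the paper avoids eigenvector comparisons in the perturbed graph entirely. It uses the Hoffman--Smith internal-path lemma: since $e\geq n$, the minimizer $G$ contains an internal path not through the pendant vertex $v$; subdividing one of its edges does not increase the spectral radius and yields a connected graph on $n+1$ vertices and $e+1$ edges, and then deleting $v$ strictly decreases the spectral radius and lands back in $\mathcal{G}_{n,e}$, giving the contradiction in two lines. If you want to pursue your edge-relocation strategy, you would need a genuinely different argument for $x'_u\geq x'_v$ (or a different choice of where to reattach the pendant), since the comparison with $x'_w$ cannot supply it.
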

\begin{proof}
Let $G=(V,E)$ be a minimizer graph in $\mathcal{G}_{n,e}$. Suppose for contradiction that there is a vertex $v$ of degree one in $G$. 

An internal path of a graph $H$ is defined as a sequence of distinct vertices $u_1,\ldots, u_k$ (except possibly $u_1=u_k$) such that $\min(d(u_1),d(u_k))\geq 3, d(u_2)=\ldots =d(u_{k-1})=2$ (unless $k=2$), and $u_j$ is adjacent to $u_{j+1}$ for $j\in \{1,\ldots,k-1\}$. Two adjacent vertices, each of degree three or more, form an internal path. A cycle in which $u_1=u_k$ has degree $3$ or more is also an internal path. If $vw$ is an edge of $H$, denote by $H_{v,w}$ the graph obtained from $H$ by subdividing the edge $vw$ by one vertex $z$; equivalently, $H_{v,w}$ is obtained from $H$ by adding one new vertex $z$, by deleting the edge $uv$ and adding the edges $vz$ and $zw$. Hoffman and Smith \cite{HS} (see also \cite{McKee} for a generalization) proved that if $H$ is a graph and $vw$ is an edge of an internal path of $H$, then $\rho(H_{v,w})\leq \rho(H)$.

Because $e\geq n\geq 2$, the graph $G$ must contain an internal path $P$. Since $d(v)=1$, $v$ is not contained in $P$. Let $ab$ be an edge of $P$. Hence, $\rho(G_{a,b})\leq \rho(G)$ by the Hoffman-Smith result mentioned above. Note that $G_{a,b}$ is a connected graph with $n+1$ vertices and $e+1$ edges. If $G'$ denotes the graph obtained from $G_{a,b}$ by deleting the vertex $v$, then $G'$ is connected, has $n$ vertices and $e$ edges and $\rho(G')<\rho(G_{a,b})$. Therefore, $G'\in \mathcal{G}_{n,e}$ and $\rho(G')<\rho(G)$, contradiction with $G$ being a minimizer in $\mathcal{G}_{n,e}$. This finishes our proof. \end{proof}
%

\subsection{The case $e\geq \frac{n(n-2)}{2}$}

Let $n$ be a natural number. We denote by $K_n$ the complete graph on $n$ vertices. If $n$ is even, the cocktail party graph $CP_n$, also known as the hyperoctahedral graph, is the complement of a perfect matching of $K_n$. Given two vertex disjoint graphs $G=(V,E)$ and $H=(W,F)$, the join $G\vee H$ of $G$ and $H$ is the graph with vertex set $V\cup W$ and whose edge set is $E\cup F\cup \{vw: v\in V, w\in W\}$.
\begin{proposition}\label{n-2 to n-1}
Let $n\in \mathbb{N}$, $1\leq p \leq \lfloor \frac{n}{2}\rfloor$ and $e = {n\choose 2} - p$. Then 
\begin{equation}
\rho_{min}(n,e)= \frac{n-3 + \sqrt{(n+1)^2 -8p}}{2}.
\end{equation}
If $n$ is even and  $p= \frac{n}{2}$, then $CP_{2p}$ is the minimizer of $\mathcal{G}_{n,\binom{n}{2}-p}$. If $p<\frac{n}{2}$, then $K_{n-2p}\vee CP_{2p}$ is the minimizer of $\mathcal{G}_{n,\binom{n}{2}-p}$.\end{proposition}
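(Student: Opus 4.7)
The plan is to identify $G^{\star} := K_{n-2p} \vee CP_{2p}$ (with the convention $K_{0} \vee CP_{n} = CP_{n}$ when $p = n/2$) as the unique minimizer, by matching an explicit upper bound on $\rho(G^{\star})$ with a tight interlacing lower bound valid for every $G \in \mathcal{G}_{n,e}$. For the upper bound I would exhibit the partition of $V(G^{\star})$ into the $2p$ vertices of $CP_{2p}$ and the $n-2p$ vertices of $K_{n-2p}$: every vertex of $CP_{2p}$ has $2p-2$ neighbors in the first part and $n-2p$ in the second, while every vertex of $K_{n-2p}$ has $2p$ and $n-2p-1$ respectively, making the partition equitable with quotient matrix
\[
B = \begin{pmatrix} 2p - 2 & n - 2p \\ 2p & n - 2p - 1 \end{pmatrix}.
\]
Computing the larger root of its characteristic polynomial and invoking Theorem \ref{thm:eq_sr} gives $\rho(G^{\star}) = \tfrac{1}{2}\bigl(n-3+\sqrt{(n+1)^{2}-8p}\bigr)$.

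For the lower bound I would take any $G \in \mathcal{G}_{n,e}$, set $H := \overline{G}$, and let $S$ be the set of vertices of positive $H$-degree, $T := V \setminus S$, and $b := |S|$. The identity $2p = \sum_{v \in S} d_{H}(v) \geq b$ forces $b \leq 2p$; all $p$ missing edges of $G$ lie inside $S$ and every vertex of $T$ has degree $n-1$ in $G$, so the quotient matrix of the partition $(S,T)$ is
\[
Q_{b} = \begin{pmatrix} b - 1 - \tfrac{2p}{b} & n - b \\ b & n - b - 1 \end{pmatrix},
\]
and Theorem \ref{thm:int_sr} yields $\rho(G) \geq \rho(Q_{b})$. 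The key technical step is to prove that $\rho(Q_{b})$, viewed as a function of the real variable $b$, is strictly decreasing on $[2,2p]$ whenever $p>0$, so that $\rho(Q_{b}) \geq \rho(Q_{2p}) = \rho(B)$. Expanding the characteristic polynomial yields $2\rho(Q_{b}) = (n-2-2p/b) + \sqrt{D(b)}$ with $D(b) = (n+2p/b)^{2} - 8p$. Differentiating, the derivative of $2\rho(Q_{b})$ simplifies to $\tfrac{2p}{b^{2}}\bigl(1 - \tfrac{n+2p/b}{\sqrt{D(b)}}\bigr)$, which is negative because $D(b) < (n+2p/b)^{2}$. This one-variable monotonicity is the main place where care is needed, but it reduces cleanly to that single inequality.

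To extract the minimizer I would trace the equality chain $\rho(G) = \rho(Q_{b}) = \rho(Q_{2p})$. Strict monotonicity forces $b = 2p$, which combined with $\sum_{v \in S} d_{H}(v) = 2p$ means every $S$-vertex has $H$-degree exactly $1$, so $H[S]$ is a perfect matching on $S$ and $G[S] = CP_{2p}$. The tightness clause of Theorem \ref{thm:int_sr} then requires $(S,T)$ to be an equitable partition of $G$, which is now automatic since every $S$-vertex has exactly $2p-2$ neighbors inside $S$. Hence $G = K_{n-2p} \vee CP_{2p}$, confirming both the value of $\rho_{min}(n,e)$ and the uniqueness of the stated minimizer; the boundary case $p = n/2$ (where $T = \emptyset$) follows the same scheme with the convention $K_{0} \vee CP_{n} = CP_{n}$, and can alternatively be deduced directly from Proposition \ref{reg}, which gives $\rho(G) \geq 2e/n = n-2$ with equality only for the $(n-2)$-regular graph $CP_{n}$.
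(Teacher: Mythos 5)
Your proposal is correct, and it runs on the same engine as the paper's proof --- a two-class vertex partition, the quotient-matrix interlacing bound of Theorem \ref{thm:int_sr}, and an equality analysis --- but your choice of partition is different and this changes the shape of the argument. The paper fixes a set of $n-2p$ vertices of degree $n-1$ (these exist because the $p$ missing edges touch at most $2p$ vertices) and partitions into that set and the remaining $2p$ vertices; with the part sizes pinned down this way, the quotient matrix is the \emph{same} matrix $\left(\begin{smallmatrix} n-2p-1 & 2p \\ n-2p & 2p-2\end{smallmatrix}\right)$ for every $G\in\mathcal{G}_{n,e}$, so interlacing gives the lower bound in one line and equality reduces immediately to the inner $2p$-vertex graph being regular, hence $CP_{2p}$. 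You instead partition by the support $S$ of the complement, so your quotient matrix depends on $G$ through $b=|S|\in[2,2p]$ and you must pay for this with the one-variable monotonicity of $\rho(Q_b)$; your formula $2\rho(Q_b)=n-2-2p/b+\sqrt{(n+2p/b)^2-8p}$ and the sign of its derivative check out, so that extra step is sound. What your version buys is a transparent uniqueness argument: $b=2p$ together with $\sum_{v\in S}d_{\overline{G}}(v)=2p$ forces $\overline{G}[S]$ to be a perfect matching by pure counting, whereas the paper's equality case relies on the standard (unstated) fact that $\rho(G)=\rho(Q)$ makes the part-constant vector an eigenvector and hence forces equitability. Both routes are valid; the paper's is shorter because its quotient matrix is universal, while yours makes the structure of the extremal graph more explicit.
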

\begin{proof}
 Let $G$ be a graph in $\mathcal{G}_{n,e}$. If $n$ is even and $p = \frac{n}{2}$, the result follows from Proposition \ref{reg}. Assume that $1\leq p< \frac{n}{2}.$ Note that $G$ is obtained from $K_n$ by removing $p$ edges. Thus, $G$ has at least $n-2p$ vertices of degree $n-1$ and $G = K_{n-2p}\vee H$, where $H$ is a graph of order $2p$ and size $2p(p-1).$ Consider the partition of vertex set of $G$ into two parts: the vertex set of $K_{n-2p}$ and the vertex set of $H$. The quotient matrix of this partition is 
    \begin{equation*}
    Q = \begin{bmatrix}
        n-2p-1&2p\\
        n-2p&2p-2
    \end{bmatrix}.
    \end{equation*}
The largest eigenvalue $\rho(Q)$ of $Q$ is the largest root of the polynomial
\begin{equation*}
    P_Q(x) = x^2 - (n-3)x + 2p+2-2n,
\end{equation*}
and equals
\begin{equation*}
\rho(Q) = \frac{n-3 +\sqrt{(n+1)^2-8p}}{2}.
\end{equation*}
By eigenvalue interlacing (see Theorem \ref{thm:int_sr}), it follows that $\rho(G)\geq \rho(Q)=\frac{n-3 + \sqrt{(n+1)^2 -8p}}{2}$ and equality happens if and only if this partition is equitable which happens exactly when $H$ is a regular graph. Because $H$ has $2p$ vertices and $2p(p-1)$ edges, $H$ must be $CP_{2p}$. This finishes our proof. \end{proof}

\subsection{The case ${n-1\choose 2} \leq e \leq \frac{n(n-2)}{2}$ }
For $n,r\in \mathbb{N}$,  we denote by $G_n^r$ a $(n-r)$-regular graph on $n$ vertices. When $e=\binom{n}{2}-(n-1)=\binom{n-1}{2}$, Jack Koolen\footnote{Personal communication to the first author during the conference Workshop on Spectral Graph Theory, Niteroi, Brazil {\tt http://spectralgraphtheory.org/} in October 2023.}  asked whether the join of $G_{n-2}^{3}$ (or the complement of a $2$-regular graph on $n-2$ vertices) and two isolated vertices is a minimizer graph. We show that this is correct. First, we calculate the spectral radius of such graphs. 
\begin{proposition}
If $H$ is the join of the complement of a $2$-regular graph on $n-2$ vertices and two isolated vertices, then $\rho(H)$ is the largest root of the polynomial 
    \begin{equation}
        P_H(x)=x^2-(n-5)x-2(n-2),
    \end{equation}
    and consequently,
    \begin{equation}
    \rho(H)=\frac{n-5+\sqrt{(n-1)^2+8}}{2}>n-3.
    \end{equation}
\end{proposition}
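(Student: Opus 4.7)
The plan is to exploit the natural two-block partition coming from the join structure. Writing $A$ for the vertex class carrying the $(n-5)$-regular graph (i.e., the complement of a $2$-regular graph on $n-2$ vertices) and $B$ for the two-vertex independent set, I would first verify that $\pi = \{A,B\}$ is an equitable partition of $H$: every vertex of $A$ has exactly $n-5$ neighbors inside $A$ plus $2$ neighbors in $B$ (one for each vertex of $B$ via the join), while every vertex of $B$ has $n-2$ neighbors in $A$ and $0$ in $B$. This produces the quotient matrix
\begin{equation*}
Q = \begin{bmatrix} n-5 & 2 \\ n-2 & 0 \end{bmatrix}.
\end{equation*}

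Next, since the join of any graph with a nonempty graph is connected, $H$ is connected, and Theorem \ref{thm:eq_sr} gives $\rho(H) = \rho(Q)$. A direct $2\times 2$ determinant computation yields $\det(xI - Q) = x(x-(n-5)) - 2(n-2) = x^2 - (n-5)x - 2(n-2)$, which is precisely the claimed $P_H(x)$. Solving the quadratic and simplifying the discriminant via the identity $(n-5)^2 + 8(n-2) = n^2 - 2n + 9 = (n-1)^2 + 8$ then produces the closed form
\begin{equation*}
\rho(H) = \frac{n-5 + \sqrt{(n-1)^2 + 8}}{2}.
\end{equation*}

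Finally, for the strict inequality $\rho(H) > n-3$, I would use the crude bound $(n-1)^2 + 8 > (n-1)^2$, so $\sqrt{(n-1)^2+8} > n-1$ and hence $\rho(H) > \frac{(n-5)+(n-1)}{2} = n-3$. I do not anticipate any substantive obstacle here: the entire argument reduces to recognizing the equitable partition dictated by the join, after which writing down $Q$, taking a $2\times 2$ determinant, and performing the discriminant simplification are all routine.
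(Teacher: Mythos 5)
Your proposal is correct and follows the same route as the paper: the paper uses exactly this two-block equitable partition (isolated pair versus the rest), obtains the same quotient matrix up to reordering of the blocks, and leaves the quadratic and discriminant simplification as "a simple calculation," which you have carried out correctly.
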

\begin{proof}
Consider the partition of the vertex set of $H$ into two parts: the two isolated vertices and the rest. This partition is equitable with quotient matrix
    \begin{equation}
    B=\begin{bmatrix}0&n-2\\2&n-5\end{bmatrix}.
    \end{equation}
Theorem \ref{thm:eq_sr} implies that $\rho(H)$ equals the largest eigenvalue of $B$. A simple calculation finishes the proof.
\end{proof}

\begin{proposition}\label{e=n-1 choose 2}
 If $G$ is a graph on $n$ vertices and $\binom{n-1}{2}$ edges, then 
    \begin{equation}
        \rho(G)\geq \frac{n-5+\sqrt{(n-1)^2+8}}{2}.
    \end{equation}
\end{proposition}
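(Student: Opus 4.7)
My plan is to apply the eigenvalue interlacing theorem (Theorem \ref{thm:int_sr}) to a suitably chosen bipartition of $V(G)$. The motivation is that for the graph $H$ of the preceding proposition, the partition into the pair of degree-$(n-2)$ vertices and the remaining $n-2$ vertices of degree $n-3$ is equitable, with quotient matrix the $2\times 2$ matrix $B$ whose largest eigenvalue is $\rho(H)$. So it suffices to produce, for every $G\in\mathcal{G}_{n,\binom{n-1}{2}}$, a bipartition of $V(G)$ whose quotient matrix has largest eigenvalue at least $\rho(H)$.

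First I would dispose of the disconnected case: a short edge-counting argument shows that a disconnected graph on $n$ vertices with $\binom{n-1}{2}$ edges must be $K_{n-1}\cup K_1$, whose spectral radius $n-2$ exceeds $\rho(H)$ for $n\geq 3$. For connected $G$, I would choose a non-adjacent pair $u_1,u_2\in V(G)$ and compute the quotient matrix $Q$ of the bipartition $\{u_1,u_2\}\sqcup(V(G)\setminus\{u_1,u_2\})$; evaluating the characteristic polynomial $P_Q$ at $\lambda=\rho(H)$ and simplifying using the identity $\rho(H)^2=(n-5)\rho(H)+2(n-2)$ yields
\[
2(n-2)\,P_Q(\rho(H)) \;=\; 4\bigl(n-2-\rho(H)\bigr)^2 \;-\; \bigl(d_G(u_1)+d_G(u_2)-2\rho(H)\bigr)^2.
\]
Thus $P_Q(\rho(H))\leq 0$ whenever $|d_G(u_1)+d_G(u_2)-2\rho(H)|\geq 2(n-2-\rho(H))$. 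The simplest sufficient condition is $d_G(u_1)=d_G(u_2)=n-2$, i.e., $\overline{G}$ contains a $K_2$-component; in that case interlacing immediately gives $\rho(G)\geq \rho(H)$, and the partition is equitable (so equality holds) precisely when $G[V(G)\setminus\{u_1,u_2\}]$ is $(n-5)$-regular, forcing $G\cong H$.

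To finish, I would handle the case when $\overline{G}$ has no $K_2$-component. If $\overline{G}$ has an isolated vertex $v$, then $d_G(v)=n-1$ and the singleton bipartition $\{v\}\sqcup(V(G)\setminus\{v\})$ yields a quotient matrix with largest eigenvalue $(n-4+\sqrt{n^2-4n+12})/2$, which an algebraic comparison (rationalizing $\sqrt{n^2-4n+12}$ against $\sqrt{(n-1)^2+8}-1$) shows to strictly exceed $\rho(H)$ for all $n\geq 3$. Otherwise $\overline{G}$ has minimum degree at least one with no $K_2$-component and no isolated vertex; since $|E(\overline{G})|=n-1$, $\overline{G}$ must contain a leaf $v$ whose unique $\overline{G}$-neighbor has $\overline{G}$-degree at least two. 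I would then apply Theorem \ref{rotation} to show that any candidate minimizer $G$ of this form admits an edge rotation producing a graph $G'\in\mathcal{G}_{n,\binom{n-1}{2}}$ with $\rho(G')<\rho(G)$ whose complement has either a $K_2$-component or an isolated vertex, contradicting minimality and reducing to a previous case.

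The most delicate step is the rotation argument in this final sub-case. Choosing the rotation vertices $r, s, t$ so that the eigenvector hypothesis $x_t\geq x_s$ of Theorem \ref{rotation} holds, while simultaneously preserving connectivity and the edge count, requires a careful comparison of principal-eigenvector entries at the leaves of $\overline{G}$ and their $\overline{G}$-neighbors. I expect this comparison to go through by combining the Perron--Frobenius inequalities for $x$ with the structural constraint that $\overline{G}$ has exactly $n-1$ edges and minimum degree one but no $K_2$-component, which severely restricts the possible shapes of $\overline{G}$ and leaves enough flexibility to execute a strictly $\rho$-decreasing rotation.
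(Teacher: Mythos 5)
Your reduction is clean up to the last sub-case, and your identity
\[
2(n-2)\,P_Q(\rho(H)) \;=\; 4\bigl(n-2-\rho(H)\bigr)^2-\bigl(d_G(u_1)+d_G(u_2)-2\rho(H)\bigr)^2
\]
for a non-adjacent pair checks out (I verified it using $\rho(H)^2=(n-5)\rho(H)+2(n-2)$), and your first two sub-cases (isolated vertex of $\overline{G}$, i.e.\ $\Delta(G)=n-1$, and a $K_2$-component of $\overline{G}$) coincide with the paper's interlacing computations. The genuine gap is the third sub-case. There you do not give a proof: you announce that a rotation via Theorem \ref{rotation} will produce a $\rho$-decreasing move, and then concede that the required eigenvector comparison is ``delicate'' and that you ``expect'' it to go through. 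That comparison is precisely the hard content (note that to \emph{decrease} $\rho$ you must apply Theorem \ref{rotation} in reverse, which forces you to control the principal eigenvector of the \emph{new} graph $G'$, not of $G$ --- compare the length of the analogous argument in Lemma \ref{exactly one n-2}), and it is also unclear that one rotation lands you in a complement with a $K_2$-component or an isolated vertex. As written, the proposal does not establish the proposition.

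The irony is that this sub-case needs no rotation at all. If $\overline{G}$ has $n-1$ edges, no isolated vertex and no $K_2$-component, then $\overline{G}$ has at least two leaves (minimum degree $\ge 1$ with degree sum $2n-2$ forces at least two vertices of degree one), and two leaves of $\overline{G}$ cannot be adjacent in $\overline{G}$ (that would be a $K_2$-component), so they are two \emph{adjacent} vertices of degree $n-2$ in $G$. The same two-vertex partition, now applied to an adjacent pair, has quotient matrix
$\begin{bmatrix}1 & n-3\\ \tfrac{2(n-3)}{n-2} & n-5+\tfrac{2}{n-2}\end{bmatrix}$,
and evaluating its characteristic polynomial at $\rho(H)$ gives a negative value, so interlacing again yields $\rho(G)>\rho(H)$. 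This is exactly how the paper closes the argument (it splits on $\Delta(G)=n-1$ versus $\Delta(G)=n-2$, counts the vertices of degree $n-2$, and treats the adjacent and non-adjacent configurations of such a pair by the two quotient matrices); replacing your rotation sketch by this one additional computation would make your proof complete and essentially identical in method to the paper's.
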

\begin{proof}
Let $G=(V,E)$ be a graph with $n$ vertices and $\binom{n-1}{2}$ edges. We denote by $\Delta(G)$ the maximum degree of $G$. If $\Delta(G)=n-1$, then let $u$ be a vertex of $G$ of degree $n-1$. Consider the partition of the vertex set of $G$ into two parts: $\{u\}$ and $V\setminus \{u\}$. Because the number of edges of $G$ is $\binom{n-1}{2}$, the quotient matrix of this partition is
\begin{equation}
Q_G=\begin{bmatrix}0 & n-1\\
1 & n-4
\end{bmatrix}
\end{equation}
The largest eigenvalue $\rho(Q_G)$ of $Q_G$ is the largest root of the polynomial 
\begin{equation}
    P_{Q_G}(x)=x^2-(n-4)x-(n-1),
\end{equation}
and equals 
\begin{equation}
\rho(Q_G)=\frac{n-4+\sqrt{(n-2)^2+8}}{2}.
\end{equation}
By eigenvalue interlacing (see Theorem \ref{thm:int_sr}), we get that 
\begin{equation}\label{eq:interlaceG}
\rho(G)\geq \rho(Q_G).
\end{equation}
We now show that 
\begin{equation}
    \rho(Q_G)>\rho(H).
\end{equation}
To see this, first note (from the previous proposition) that $\rho(H)^2=(n-5)\rho(H)+2(n-2)$ and 
that 
\begin{equation}
\rho(H)=\frac{n-5+\sqrt{(n-1)^2+8}}{2}>\frac{n-5+n-1}{2}=n-3.
\end{equation}
Therefore,
\begin{align*}
    P_{Q_G}(\rho(H))&=\rho(H)^2-(n-4)\rho(H)-(n-1)\\
    &=(n-5)\rho(H)+2(n-2)-(n-4)\rho(H)-(n-1)\\
    &=(n-3)-\rho(H)\\
    &<0.
\end{align*}
This inequality implies that $\rho(H)$ lies between the roots of the polynomial $P_{Q_G}$ and therefore, $\rho(Q_G)>\rho(H)$. Using \eqref{eq:interlaceG}, we deduce that $\rho(G)>\rho(H)$. This finishes the proof of this case.

If $\Delta(G)\leq n-2$, then we first show that $\Delta(G)=n-2$. Because $e=\binom{n-1}{2}$, we have that $\Delta(G)\geq \frac{2e}{n}=\frac{n^2-3n+2}{n}=n-3+\frac{2}{n}$. Therefore, $\Delta(G)\geq n-2$ which proves our claim.

Let $t$ denote the number of vertices $v$ of $G$ whose degree equals $n-2$. Thus,
\begin{equation}
n^2-3n+2=2e=\sum_{u\in V}d(u)\leq t(n-2)+(n-t)(n-3)=n^2-3n+t.
\end{equation}
Therefore, $t\geq 2$.

If $t=2$, then let $v_1$ and $v_2$ be the two vertices of degree $n-2$. From the above inequality, we deduce that the remaining vertices have degrees equal to $n-3$.  There are two sub-cases here: if $v_1$ and $v_2$ are adjacent and if $v_1$ and $v_2$ are not adjacent.

If $v_1$ and $v_2$ are not adjacent, then the graph $G$ is the join of an empty graph on two vertices ($v_1$ and $v_2$) and the complement of a $2$-regular graph on $n-2$ vertices and falls into the category of the conjectured minimizer graphs.

If $v_1$ and $v_2$ are adjacent, then consider the partition of $G$ into $\{v_1,v_2\}$
 and $V\setminus \{v_1,v_2\}$. The quotient matrix of this partition is 
\begin{equation}
Q'=\begin{bmatrix}
1 & n-3\\
\frac{2n-6}{n-2} & n-5+\frac{2}{n-2}
\end{bmatrix}.
\end{equation}
The largest eigenvalue $\rho(Q')$ of $Q'$ is the largest root of the polynomial 
\begin{equation}
P_{Q'}(x) = x^2 -x\left(n-4 + \frac{2}{n-2}\right) + (n-5) + \frac{2}{n-2}(1-(n-3)^2). 
\end{equation}
We compute
\begin{equation}
 (P_H - P_{Q'})(x) = x\frac{n}{n-2} -3(n-3) +\frac{2}{n-2}((n-3)^2-1).   
\end{equation}
When $x = \rho(H)$, the above equation becomes
\begin{align*}
-P_{Q'}(\rho(H))&= \frac{n(n-5)}{2(n-2)} + \frac{n\sqrt{(n-1)^2 +8}}{2(n-2)} -3(n-3) + \frac{2(n-2)(n-4)}{n-2}\\
&= \frac{1}{2(n-2)}\left( n(n-5) + n\sqrt{(n-1)^2 +8} + (n-2)(-2n+2)\right)\\
&=\frac{-1}{2(n-2)}\left(n(n-1) +4 - n\sqrt{(n-1)^2 +8}\right).
\end{align*}
The right hand side is positive if and only if $n(n-1) +4 < n\sqrt{(n-1)^2 +8}$ which is true for all $n>2.$ Therefore when $x = \rho(H)$, $P_{Q'}(x)<0$. This proves that $\rho(Q')>\rho(H)$ and 
by eigenvalue interlacing (see Theorem \ref{thm:int_sr}), we get that $\rho(G)>\rho(H)$.

If $t\geq 3,$ then there will be two vertices $v_1$ and $v_2$ (each of degree $n-2$) that are adjacent in $G$. We have shown above that such graph cannot be a minimizer. This completes the proof.
\end{proof}

\begin{lemma}\label{not adjacent n-2}
A minimizer graph $G\in \mathcal{G}_{n,e}$ cannot contain a vertex $v$ of degree $n-2$ and a vertex $u$ of degree $n-i$, for some $i\geq 4$, such that $v\not \sim u$ in $G$. 
\end{lemma}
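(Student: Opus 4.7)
My plan is to assume, for contradiction, that $G\in\mathcal{G}_{n,e}$ is a minimizer containing $v$ and $u$ as described, and then construct a competitor $G'\in\mathcal{G}_{n,e}$ with $\rho(G')<\rho(G)$ via a single edge swap. Because $d(v)=n-2$, the unique non-neighbor of $v$ is $u$; because $d(u)=n-i$, the set $W$ of non-neighbors of $u$ other than $v$ has size $|W|=i-2\geq 2$, and each $a\in W$ is adjacent to $v$. Let $x$ denote the Perron eigenvector of $G$, and set $\rho:=\rho(G)$ and $S:=\sum_{w}x_w$. The eigenvalue equations give $(\rho+1)x_v=S-x_u$ and $(\rho+1)x_u=S-x_v-\sum_{a\in W}x_a$; subtracting yields $\rho(x_v-x_u)=\sum_{a\in W}x_a>0$, so $x_v>x_u$.

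Next, I pick $a^{*}\in W$ and form $G':=G-va^{*}+ua^{*}$; this has the same vertex and edge counts and remains connected (as $v$ keeps $n-3\geq 2$ neighbors), so $G'\in\mathcal{G}_{n,e}$. In $G'$ we have $a^{*}\sim u$ and $a^{*}\not\sim v$, so the rotation that deletes $ua^{*}$ and adds $va^{*}$ takes $G'$ back to $G$. Letting $y$ be the Perron eigenvector of $G'$, Theorem~\ref{rotation} applied to this rotation yields $\rho(G)>\rho(G')$ provided $y_v\geq y_u$, contradicting the minimality of $G$. Repeating the first paragraph's computation inside $G'$---where $v$'s non-neighbors are $\{u,a^{*}\}$ and $u$'s non-neighbors are $\{v\}\cup(W\setminus\{a^{*}\})$---gives
\begin{equation*}
\rho(G')(y_v-y_u)=\sum_{a\in W\setminus\{a^{*}\}}y_a \; - \; y_{a^{*}}.
\end{equation*}
Since $|W\setminus\{a^{*}\}|=i-3\geq 1$ (this is where the hypothesis $i\geq 4$ enters decisively), this quantity is nonnegative as soon as $y_{a^{*}}\leq y_a$ for every other $a\in W$, for then $\sum_{a\in W\setminus\{a^{*}\}}y_a\geq(|W|-1)y_{a^{*}}\geq y_{a^{*}}$.

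The main obstacle is that the Perron vector $y$ depends on the choice of $a^{*}$, producing a potential circularity in the prescription ``pick $a^{*}$ so that $y_{a^{*}}$ is minimal in $W$.'' I plan to resolve this by taking $a^{*}:=\arg\min_{a\in W}x_a$ in the Perron vector of $G$ and showing that this choice transfers to $G'$: comparing the eigenvalue equations of $a^{*}$ and of any $a\in W\setminus\{a^{*}\}$ in $G'$---they differ only in that the neighborhood of $a^{*}$ contains $u$ where that of $a$ contains $v$---the self-consistent use of $y_v\geq y_u$ forces $y_{a^{*}}\leq y_a$, completing the plan.
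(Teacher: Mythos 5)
Your setup is essentially the paper's: the same single edge swap $G'=G-va^{*}+ua^{*}$, followed by an attempt to rotate the edge back via Theorem~\ref{rotation}, and your identity $\rho(G')(y_v-y_u)=\sum_{a\in W\setminus\{a^{*}\}}y_a-y_{a^{*}}$ is correct. The gap is in the final step, where you must show $y_{a^{*}}\le y_a$ for every other $a\in W$. The premise of your comparison is false: two vertices $a^{*},a\in W$ do \emph{not} have neighborhoods that ``differ only in that the neighborhood of $a^{*}$ contains $u$ where that of $a$ contains $v$.'' All that is known about the vertices of $W$ is that each is adjacent to $v$ and not to $u$ in $G$; their adjacencies to the remaining $n-4$ vertices and to each other are completely unconstrained, so their eigenvalue equations cannot be compared term by term. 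Without such a comparison there is no handle on the sign of $\sum_{a\in W\setminus\{a^{*}\}}y_a-y_{a^{*}}$, and indeed $y_u>y_v$ can genuinely occur for a given choice of $a^{*}$ (e.g.\ for $i=4$ the right-hand side is $y_a-y_{a^{*}}$, which has no reason to be nonnegative). Your proposed repair --- choose $a^{*}$ minimizing $x_{a^{*}}$ in the Perron vector of $G$ --- does not help, because the ordering of Perron entries need not survive the edge swap (each choice of $a^{*}$ produces a different graph with a different Perron vector), and the ``self-consistent'' derivation of $y_{a^{*}}\le y_a$ from the yet-unproved inequality $y_v\ge y_u$ is circular as stated.

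The paper closes exactly this case by a device you should adopt: it never determines the sign of $y_v-y_u$. If $y_v\ge y_u$ in $G'$, rotate the edge back as you do and get $\rho(G)>\rho(G')$. If instead $y_u>y_v$, then for \emph{every} remaining non-neighbor $z$ of $u$ in $G'$ (there is at least one precisely because $i\ge4$) delete $vz$ and add $uz$; the quadratic form $y^{T}A(\cdot)y$ strictly increases by $2(y_u-y_v)\sum_z y_z>0$, and the resulting graph is isomorphic to $G$ via the transposition of $u$ and $v$. In either case $\rho(G')<\rho(G)$, contradicting minimality. This two-case split is what makes the single-swap strategy work, and it is the missing ingredient in your argument.
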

\begin{proof}
Suppose a minimizer $G\in \mathcal{G}_{n,e}$ has a vertex $v$ with $d(v)=n-2$ that is not adjacent to a vertex $u$ with $d(u)=n-i$ for some $i\geq 4$. Choose a vertex $w$ such that $w\not\sim u$ in $G$. We construct a graph $H$ by deleting the edge $vw$ and adding the edge $uw.$ Let $x$ be the principal eigenvector of $H.$ If $x_u\leq x_v,$ then $\lambda_1(H)<\lambda_1(G)$ by Theorem \ref{rotation}, a contradiction. If $x_u>x_v,$ then we add edges $uz$ and delete edges $vz$ for all $z\not\in N(u)$ to obtain a graph $H'$ which is isomorphic to $G$. Then again $\lambda_1(H)<\lambda_1(H')=\lambda_1(G),$ a contradiction. 
\end{proof}

\begin{theorem}\label{n-1}
Let $n\geq 3$ be an integer. If $e<{n\choose 2} -\frac{n}{2}$, then a minimizer graph in $\mathcal{G}_{n,e}$ cannot have a vertex of degree $n-1$.   
\end{theorem}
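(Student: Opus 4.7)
The plan is to adapt the strategy used in the proof of Lemma \ref{not adjacent n-2}. Assume for contradiction that $G \in \mathcal{G}_{n,e}$ is a minimizer containing a vertex $v^*$ of degree $n-1$. Because $e < \binom{n}{2} - n/2 = n(n-2)/2$, the average degree satisfies $2e/n < n-2$, so there exists a vertex $u_0 \neq v^*$ with $d(u_0) \leq n-3$. Since $v^*$ is universal, $v^* \in N_G(u_0)$, and hence $u_0$ has at least $n - 1 - d(u_0) \geq 2$ non-neighbors in $V(G) \setminus \{v^*, u_0\}$. (The tree case $e = n-1$, where having a universal vertex forces $G = K_{1,n-1}$, is immediate from the classical fact that $\rho(K_{1,n-1}) = \sqrt{n-1} > \rho(P_n)$; so we may assume $e \geq n$, and Lemma \ref{degree1} then guarantees every vertex of $G$ has degree at least $2$.)

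Fix such a non-neighbor $w$ of $u_0$ and set $H := G - v^*w + u_0 w$. One checks that $H \in \mathcal{G}_{n,e}$: it is simple with $n$ vertices and $e$ edges, and it remains connected because $w$ has another neighbor in $H$ (by Lemma \ref{degree1}) from which $v^*$ is reachable through $u_0$. Let $x$ be the principal eigenvector of $H$. If $x_{u_0} \leq x_{v^*}$, Theorem \ref{rotation} applied to $H$ with pivot $w$, edge $wu_0 \in E(H)$ and non-edge $wv^* \notin E(H)$, gives $\rho(H - wu_0 + wv^*) > \rho(H)$. Since $H - wu_0 + wv^* = G$, this contradicts the minimality of $G$. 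If instead $x_{u_0} > x_{v^*}$, I would iterate: for each remaining non-neighbor $z$ of $u_0$ in $V(G) \setminus \{v^*, u_0\}$, perform the swap $v^*z \mapsto u_0 z$. A direct check using that $v^*$ was universal in $G$ shows that the final graph $H'$ has $u_0$ as its universal vertex and $v^*$ adjacent precisely to $\{u_0\} \cup (N_G(u_0) \setminus \{v^*\})$, and is isomorphic to $G$ via the transposition $v^* \leftrightarrow u_0$; hence $\rho(H') = \rho(G)$. Chaining the strict inequalities $\rho(H_{j-1}) < \rho(H_j)$ furnished by Theorem \ref{rotation} at each intermediate swap then yields $\rho(H) < \rho(H') = \rho(G)$, again contradicting minimality.

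The principal technical hurdle lies in the second case: to invoke Theorem \ref{rotation} at each intermediate graph $H_j$ of the swap sequence, one must verify that the corresponding principal eigenvector entries satisfy $x^{(j)}_{u_0} \geq x^{(j)}_{v^*}$. Intuitively, each successive swap removes an edge from $v^*$ and attaches one to $u_0$, which should only reinforce $u_0$'s dominance over $v^*$ in the Perron vector; rigorously, one expects this to follow from a monotonicity argument, for instance by ordering the remaining swaps according to the Perron entries of the targets $z$ or by tracking $x^{(j)}_{u_0} - x^{(j)}_{v^*}$ directly through the sequence. This parallels the (somewhat elided) iteration step in Case 2 of the proof of Lemma \ref{not adjacent n-2}, and I anticipate that the same techniques carry over here.
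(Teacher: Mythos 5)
Your overall strategy is the same as the paper's: perform one rotation $v^*w \mapsto u_0w$ to get $H$, and split on whether $x_{u_0}\leq x_{v^*}$ for the principal eigenvector $x$ of $H$. Your first case is exactly the paper's argument. The problem is your second case, where you leave a genuine gap that you yourself flag: to chain applications of Theorem \ref{rotation} through the sequence $H=H_0,H_1,\dots,H_m=H'$ you would need $x^{(j)}_{u_0}\geq x^{(j)}_{v^*}$ for the principal eigenvector $x^{(j)}$ of \emph{each} intermediate graph $H_j$, and you only know this for $j=0$. ``One expects this to follow from a monotonicity argument'' is not a proof, and nothing in the paper's toolkit hands you this monotonicity for free; as written, the second case is unproved.

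The missing idea is that no iteration is needed. Perform all the remaining swaps $v^*z\mapsto u_0z$ simultaneously and test the resulting graph $H'$ against the single, fixed principal eigenvector $x$ of $H$: since every deleted edge $v^*z$ is replaced by $u_0z$ with the same partner $z$,
\begin{equation*}
x^TA(H')x - x^TA(H)x \;=\; 2\,(x_{u_0}-x_{v^*})\sum_{z} x_z \;>\;0,
\end{equation*}
because $x_{u_0}>x_{v^*}$, the entries of $x$ are positive, and the set of such $z$ is nonempty (in $H$ we still have $d(u_0)\leq n-2$, and each such $z$ is adjacent to $v^*$ in $H$ since $v^*$ misses only $w\in N_H(u_0)$). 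Hence $\rho(H')\geq x^TA(H')x > x^TA(H)x=\rho(H)$, and since $H'\cong G$ (your transposition argument for this is fine), $\rho(H)<\rho(G)$, the desired contradiction. This Rayleigh-quotient computation with a fixed test vector is exactly what underlies the paper's one-line ``then clearly $\lambda_1(G')<\lambda_1(H)=\lambda_1(G)$,'' and it completely sidesteps the eigenvector-tracking problem your edge-by-edge version creates. With that replacement your proof is correct; the rest of your write-up (the existence of $u_0$ with $d(u_0)\leq n-3$ from the average degree, and the connectivity check for $H$) is sound and, if anything, slightly more careful than the paper's.
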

\begin{proof} Let $G$ be a minimizer graph of $\mathcal{G}_{n,e}$. Assume that $G$ has a vertex $v$ with degree $n-1$. Choose a vertex $u$ such that $d(u)<\floor{2e/n}<n-2$, and a vertex $w\not\sim u$ in $G$. We construct a new graph $G'$ by deleting the edge $vw$ and adding the edge $uw$ in $G$. Note that $d(u)<n-1$ in $G'$. Let $x$ be the principal eigenvector of $G'$. If $x_u\leq x_v$, then $\lambda_1(G')<\lambda_1(G)$ by Theorem \ref{rotation}, a contradiction. If $x_u>x_v$, we add the edges $uz$ and delete the edges $vz$ for all $z\not\in N(u)$ in $G'$ to obtain the graph $H$, which is isomorphic to $G$. Then clearly $\lambda_1(G')<\lambda_1(H)=\lambda_1(G)$, a contradiction. This completes the proof.
\end{proof}

If $G=(V,E)$ and $H=(W,F)$ are two vertex disjoint graphs, then $G\cup H$ denotes the graph with vertex set $V\cup W$ and edge set $E\cup F$.
\begin{proposition}\label{p=n+1/2}
If $n\geq 5$ is odd and $e = {n\choose 2} - \frac{n+1}{2}$, then $\rho_{min}(n,e)$ equals  the largest eigenvalue of the matrix
\begin{equation*}
\begin{bmatrix}
            n-5&2&1\\
            n-3&1&0\\
            n-3&0&0
        \end{bmatrix}.
\end{equation*}
The minimizers in $\mathcal{G}_{n,e}$ are of the form $G_{n-3}^2\vee (K_2 \cup K_1)$.
\end{proposition}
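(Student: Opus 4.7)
The plan is to first construct the candidate extremal graph $H=G_{n-3}^2\vee (K_2\cup K_1)$, to identify $\rho(H)$ with the largest eigenvalue of the displayed $3\times 3$ matrix via an equitable partition, and then to show that every minimizer in $\mathcal{G}_{n,e}$ must be isomorphic to such an $H$. For the candidate itself, note that $n$ odd makes $n-5$ even, so an $(n-5)$-regular graph on $n-3$ vertices exists (take the complement of $C_{n-3}$ for $n\geq 7$, or $\overline{K_2}$ for $n=5$). The partition of $V(H)$ into the vertex set of $G_{n-3}^2$, the two vertices of $K_2$, and the singleton from $K_1$ is equitable with the quotient matrix displayed, so Theorem \ref{thm:eq_sr} identifies $\rho(H)$ with its largest eigenvalue, and a direct count confirms $H$ has $\binom{n}{2}-\frac{n+1}{2}$ edges.

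Next I would argue that every minimizer $G\in\mathcal{G}_{n,e}$ satisfies $\Delta(G)=n-2$. Since $e=\binom{n}{2}-\frac{n+1}{2}<\binom{n}{2}-\frac{n}{2}$, Theorem \ref{n-1} rules out any vertex of degree $n-1$. On the other hand $2e/n=n-2-1/n$, so some vertex has degree at least $n-2$, forcing $\Delta(G)=n-2$. Letting $t$ denote the number of vertices of degree $n-2$ in $G$ and noting that all other vertices have degree at most $n-3$, the degree-sum bound
\begin{equation*}
n^2-2n-1=\sum_{v\in V(G)}d(v)\leq t(n-2)+(n-t)(n-3)=n^2-3n+t
\end{equation*}
yields $t\geq n-1$. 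The value $t=n$ is excluded since it would give $2e=n(n-2)$, contradicting $2e=n^2-2n-1$, so $t=n-1$ and the unique remaining vertex has degree exactly $n-3$.

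The final step is purely structural. Let $u_0$ be the unique vertex of degree $n-3$ in $G$, and let $v_1,v_2$ be its two non-neighbors. Each $v_i$ has degree $n-2$, hence a unique non-neighbor, which must be $u_0$ itself; in particular $v_1\sim v_2$ and each $v_i$ is adjacent to every vertex outside $\{u_0,v_1,v_2\}$. For any of the remaining $n-3$ vertices $w$, the edges $wu_0$, $wv_1$, $wv_2$ already account for three of its $n-2$ neighbors, so $w$ has exactly $n-5$ neighbors inside the class of $n-3$ such vertices. Therefore the subgraph induced on these vertices is $(n-5)$-regular, i.e.\ a copy of $G_{n-3}^2$, and $G\cong G_{n-3}^2\vee(K_2\cup K_1)=H$. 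Since the partition just described is equitable with the stated quotient matrix, Theorem \ref{thm:eq_sr} then gives the asserted value of $\rho_{\min}(n,e)$.

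The main obstacle is the degree-squeeze that forces $\Delta(G)=n-2$ and then pins down the exact degree sequence $((n-2)^{n-1},n-3)$; once that is in hand, the structural deduction is direct and the spectral radius is immediate from the equitable partition. Note that the proof does not distinguish between different $(n-5)$-regular graphs on the central $n-3$ vertices, so the minimizer is unique only up to the choice of $G_{n-3}^2$, all such choices giving the same spectral radius.
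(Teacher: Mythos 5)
Your proposal is correct and follows essentially the same route as the paper: exclude degree $n-1$ via Theorem \ref{n-1}, use the degree-sum count to force the degree sequence $((n-2)^{n-1},\,n-3)$, and then read off the join structure from the two non-neighbors of the degree-$(n-3)$ vertex, with the equitable partition giving $\rho_{\min}$. Your version merely reorganizes the counting (a direct bound $t\geq n-1$ instead of the paper's contradiction with $\delta(G)\leq n-4$) and spells out the verification of the candidate graph more explicitly.
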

\begin{proof}
Let $G=(V,E)$ be a minimizer in $\mathcal{G}_{n,e}$. Denote by $\delta(G)$ the minimum degree of $G$. By Theorem \ref{n-1}, $G$ cannot have a vertex of degree $n-1$ and so $\Delta(G)\leq n-2$. Suppose that $\delta(G)\leq n-4$. Assume that $G$ has exactly $t$ vertices of degree $n-2$. Thus, $n(n-2) -1=2e= \sum_{v\in V}d_v \leq  n-4 + t(n-2) + (n-t-1)(n-3)$. After solving for $t$, we get $t \geq n$, which is not possible and so $\delta(G)\geq n-3$. Since $G$ is not a regular graph, $\Delta(G) -\delta(G)=1$. Also, $n-1$ vertices have degree $n-2$ and the remaining vertex (call it $u$) has degree $n-3.$ Let $v,w\in V$ such that $v,w\not\in N(u)$. Because $d(v)=d(w)=n-2$, we get $G =H\vee (K_2 \cup K_1)$, where $H$ is a $(n-5)$-regular graph on $n-3$ vertices. This proves the claim. 
\end{proof}
\begin{proposition}\label{p=n+2/2}
If $n\geq 6$  is even and $e = {n\choose 2} - \frac{n+2}{2}$, then $\rho_{min}(n,e)$ equals the largest eigenvalue of the matrix
        $$\begin{bmatrix}
            n-6&2&2\\
            n-4&1&1\\
            n-4&1&0
        \end{bmatrix}.$$
The minimizers in $\mathcal{G}_{n,e}$ are of the form $G_{n-4}^2\vee P_4$.   
\end{proposition}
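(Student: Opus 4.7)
The plan mirrors the proof of Proposition \ref{p=n+1/2}, with the complication that the even parity of $n$ admits two non-isomorphic candidate structures after the degree sequence is pinned down, so the correct one must be singled out by an explicit spectral comparison.

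I would first pin down the degree sequence. Let $G$ be a minimizer in $\mathcal{G}_{n,e}$. Since $\tfrac{n+2}{2}>\tfrac{n}{2}$, Theorem \ref{n-1} forces $\Delta(G)\le n-2$. With $2e=n^2-2n-2$, a standard degree count shows that if $\delta(G)\le n-4$ then the only feasible sequence is $(n-2)^{n-1},(n-4)$; but then the unique vertex $u$ of degree $n-4$ has three non-neighbors, all of degree $n-2$, contradicting Lemma \ref{not adjacent n-2}. Hence $\delta(G)=n-3$ and the sequence is forced to $(n-2)^{n-2},(n-3)^{2}$. Label the two degree-$(n-3)$ vertices $u_1,u_2$.

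Next I would analyze two structural cases based on whether $u_1\sim u_2$. If $u_1\not\sim u_2$, let $a,b$ be the other non-neighbors of $u_1,u_2$. One checks that $a\neq b$, that both have degree $n-2$ with $u_1,u_2$ as their unique respective non-neighbors, and that the remaining $n-4$ vertices have their non-neighbors forced among themselves, giving a perfect matching in the complement (so the bulk is $CP_{n-4}=G_{n-4}^2$). The induced subgraph on $\{u_1,u_2,a,b\}$ is then the path $u_1-b-a-u_2$, yielding $G\cong G_{n-4}^2\vee P_4$. The partition (bulk, $\{a,b\}$, $\{u_1,u_2\}$) is equitable with quotient matrix exactly the $B_b$ displayed in the statement, so $\rho(G)=\rho(B_b)$ by Theorem \ref{thm:eq_sr}. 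If instead $u_1\sim u_2$, the same bookkeeping produces four distinct non-neighbors (two per $u_i$), all of degree $n-2$, with the bulk being $CP_{n-6}$; the partition (bulk, $\{u_1,u_2\}$, four non-neighbors) is equitable with quotient matrix
\[B_a=\begin{bmatrix}n-8 & 2 & 4\\ n-6 & 1 & 2\\ n-6 & 1 & 3\end{bmatrix}.\]

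Finally I would rule out the second case by showing $\rho(B_a)>\rho(B_b)$. Setting $\lambda_b:=\rho(B_b)$, the characteristic relation $\lambda_b^3=(n-5)\lambda_b^2+3(n-3)\lambda_b+(n-2)$ reduces $P_{B_a}(\lambda_b)$ to $-\lambda_b^2+(n-6)\lambda_b+(2n-6)$, so the task becomes the inequality $\lambda_b^2-(n-6)\lambda_b-(2n-6)>0$. Combining the lower bound $\lambda_b\ge \tfrac{2e}{n}=n-2-\tfrac{2}{n}>n-3$ (Proposition \ref{reg}) with the Perron row-sum bound $\lambda_b\le n-2$ applied to $B_b$, the left-hand side exceeds $(n-3)^2-\bigl((n-6)(n-2)+(2n-6)\bigr)=3>0$, closing the argument. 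The main obstacle is this final inequality; the key simplification is using the $B_b$-characteristic relation to collapse $P_{B_a}(\lambda_b)$ into a quadratic in $\lambda_b$, after which the crude bounds on $\lambda_b$ suffice uniformly in $n\ge 6$.
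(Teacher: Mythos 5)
Your overall route coincides with the paper's: bound $\Delta$ via Theorem \ref{n-1}, eliminate $\delta\le n-4$ by degree counting plus Lemma \ref{not adjacent n-2}, pin the degree sequence to $(n-2)^{n-2},(n-3)^2$, split on whether the two degree-$(n-3)$ vertices are adjacent, and compare the two resulting equitable quotient matrices. Your structural identification of both candidates is correct, and your $B_a$ is the paper's quotient matrix for the adjacent case up to a permutation of the parts, so it has the same characteristic polynomial $x^3-(n-4)x^2-(2n-5)x+(n-4)$. The gap is in the final comparison: substituting $\lambda_b^3=(n-5)\lambda_b^2+3(n-3)\lambda_b+(n-2)$ gives
\begin{equation*}
P_{B_a}(\lambda_b)=-\lambda_b^2+\left(3(n-3)-(2n-5)\right)\lambda_b+(2n-6)=-\lambda_b^2+(n-4)\lambda_b+(2n-6),
\end{equation*}
with coefficient $n-4$, not $n-6$. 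With the correct coefficient your crude bounds no longer determine the sign: using $\lambda_b>n-3$ and $\lambda_b\le n-2$ yields only
\begin{equation*}
\lambda_b^2-(n-4)\lambda_b-(2n-6)\ \ge\ (n-3)^2-(n-4)(n-2)-(2n-6)=7-2n<0,
\end{equation*}
so the claimed conclusion ``the crude bounds suffice uniformly in $n\ge 6$'' fails.

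The inequality you need is true but tight, and the repair requires the full strength of the lower bound you state and then discard: the quadratic $q(x)=x^2-(n-4)x-(2n-6)$ is increasing for $x\ge\frac{n-4}{2}$, and $q\left(n-2-\frac{2}{n}\right)=\frac{4}{n^2}>0$, so $\lambda_b\ge\frac{2e}{n}=n-2-\frac{2}{n}$ from Proposition \ref{reg} gives $q(\lambda_b)>0$, hence $P_{B_a}(\lambda_b)<0$ and $\rho(B_a)>\lambda_b$. This is precisely the computation in the paper, which evaluates the difference of the two characteristic polynomials (equal to $q$) at $n-2-\frac{2}{n}$ and uses its monotonicity. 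Everything else in your argument is sound; only this last estimate needs to be redone with the sharper bound.
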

\begin{proof}
Let $G' = (V',E')$ be a minimizer in $\mathcal{G}_{n,e}$. By Theorem \ref{n-1}, $\Delta(G')\leq n-2.$ Assume that there are exactly $t$ vertices of degree $n-2$ in $G'$. If $\delta(G')\leq n-4$, then $2e = n(n-2) -2 = \sum_{v\in V'}d_v \leq  n-4 + t(n-2) + (n-t-1)(n-3).$ After solving for $t$, we get $t \geq n-1.$ Therefore, the only possible case is when one vertex has degree $n-4$ and the remaining vertices have degree $n-2$, meaning that $G' = G_{n-4}^2\vee (K_3\cup K_1)$. Since $G'$ has two non-adjacent vertices, one of degree $n-2$ and the other of degree $n-4$, Lemma \ref{not adjacent n-2} implies that $G'$ cannot be a minimizer. This proves $\delta(G')\geq n-3$. As $G'$ has $t$ vertices of degree $n-2$ and $n-t$ vertices of degree $n-3$, we have that $2e = n(n-2)-2 = t(n-2) + (n-t)(n-3)$. Solving for $t$ we get $t = n-2.$ Hence in $G'$ we have $n-2$ vertices of degree $n-2$ and two vertices (say $u,v)$ of degree $n-3.$

If $u\not\sim v$, then $G' \cong G_{n-4}^2\vee P_4,$ and we are done. If $u\sim v$, then $G'\cong G^2_{n-6}\vee(K_4\cup P_2)$. Consider the partition of $G'$ into three parts: $G_{n-6}^2,$ $K_4,$ and $P_2.$ The partition is equitable with the quotient matrix
     $$Q_G' = \begin{bmatrix}
         n-8&4&2\\
         n-6&3&1\\
         n-6&2&1
     \end{bmatrix}.$$
Theorem \ref{thm:eq_sr} implies that $\rho(G')$ is the largest root of the characteristic polynomial $P_{Q_G'}(x) = x^3-x^2(n-4)-x(2n-5)+n-4.$ 
     Now consider the partition of $G = G_{n-4}^2\vee P_4$ into three parts: $G_{n-4}^2$, the inner two vertices of $P_4$, and the outer two vertices of $P_4.$ This partition is equitable with quotient matrix
     $$Q_G =\begin{bmatrix}
         n-6&2&2\\
         n-4&1&1\\
         n-4&1&0
     \end{bmatrix}.$$
  Theorem \ref{thm:eq_sr} implies that $\rho(G)$ is the largest root of the characteristic polynomial $P_{Q_G}(x) = x^3 -x^2(n-5)-3x(n-3)-n+2.$ 
     We note that $(P_{Q_G}-P_{Q_G'})(x) = x^2-x(n-4)-2n+6$ is monotonically increasing for $x>\frac{n-4}{2}.$ Because $\min(\rho(G),\rho(G')) > n-2-\frac{2}{n}$ and  
     $ (P_{Q_G}-P_{Q_G'})\left(n-2-\frac{2}{n}\right) =\frac{4}{n^2}>0,$ we get that $\rho(G)<\rho(G').$ This completes the proof.
\end{proof}

\begin{theorem}\label{n-2 to n-3}
Let $n\geq 6$ be an even integer. If $1\leq p\leq \frac{n-4}{2}$ and $e = {n\choose 2} - (\frac{n+2}{2}+p)$, then 
\begin{equation*}
    \rho_{min}(n,e) = \frac{n-5+\sqrt{(n+1)^2-8p-8}}{2}.
\end{equation*}
The minimizers of $\mathcal{G}_{n,e}$ are of the form $G_{n-2p-2}^2\vee G_{2p+2}^3$.
\end{theorem}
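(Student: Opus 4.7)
The plan follows the three-step strategy used in Propositions~\ref{p=n+1/2} and~\ref{p=n+2/2}: compute $\rho$ of the proposed extremal graph, force the degree sequence of any minimizer, and then force its structure.

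For Step~1, set $G_0 := G_{n-2p-2}^2 \vee G_{2p+2}^3$. The bipartition of $V(G_0)$ into the two sides of the join is equitable with $2\times 2$ quotient matrix
\[
B_0 = \begin{bmatrix} n-2p-4 & 2p+2 \\ n-2p-2 & 2p-1 \end{bmatrix}.
\]
Theorem~\ref{thm:eq_sr} gives $\rho(G_0)=\rho(B_0)$; expanding $\det(xI-B_0) = x^2 - (n-5)x + (2p-3n+8)$ and using $(n-5)^2 - 4(2p-3n+8) = (n+1)^2 - 8p - 8$ yields the claimed formula.

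For Step~2, let $G \in \mathcal{G}_{n,e}$ be a minimizer. Since $e < \binom{n}{2} - n/2$, Theorem~\ref{n-1} gives $\Delta(G)\leq n-2$. Let $t, s, r$ count the vertices of degrees $n-2$, $n-3$, and at most $n-4$, with corresponding sets $A, B, L$. If $r \geq 1$, pick $u \in L$ with $d(u) = n-k$, $k \geq 4$; by Lemma~\ref{not adjacent n-2}, all $k-1$ non-neighbors of $u$ lie in $B \cup L \setminus \{u\}$, giving $s+r \geq k \geq 4$. Combining this with the degree-sum bound $2e \leq n(n-4)+2t+s$ forces $s+2r \leq 2p+2$, hence $r \leq 2p-2$. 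This immediately closes the case $p=1$. For $p\geq 2$, the finitely many remaining admissible triples $(r,s,t)$ must each be ruled out by a refined interlacing argument on the three-part partition $(A,B,L)$: Lemma~\ref{not adjacent n-2} forces the $A$-$L$ entries of the quotient matrix to be $q_{AL} = r$ and $q_{LA} = t$, so this $3 \times 3$ matrix has few free parameters and its largest eigenvalue can be compared with $\rho(G_0)$ case by case. Once $\delta(G) \geq n-3$, the equation $t(n-2) + (n-t)(n-3) = 2e$ gives $t = n-2p-2$ and $|B| = 2p+2$.

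For Step~3, let $n_{AB}$ count the $A$-$B$ non-adjacencies in $G$. The constraints $2n_A + n_{AB} = |A|$ and $2n_B + n_{AB} = 2|B|$ (where $n_A, n_B$ are the numbers of non-edges inside $A, B$) determine the averaged quotient matrix of the partition $(A, B)$ as
\[
Q(n_{AB}) = \begin{bmatrix} n-2p-4 + \tfrac{n_{AB}}{n-2p-2} & 2p+2 - \tfrac{n_{AB}}{n-2p-2} \\ n-2p-2 - \tfrac{n_{AB}}{2(p+1)} & 2p-1 + \tfrac{n_{AB}}{2(p+1)} \end{bmatrix}.
\]
A short calculation gives $P_{Q(n_{AB})}(x) - P_{Q(0)}(x) = n_{AB}(K' - C'x)$ with $C', K' > 0$ and $K'/C' = (n^2 - 2n - 2p - 2)/n = 2e/n$. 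Since $G_0$ is irregular, Proposition~\ref{reg} gives $\rho(Q(0)) = \rho(G_0) > 2e/n$, so $P_{Q(n_{AB})}(\rho(Q(0))) < 0$ whenever $n_{AB} \geq 1$, which puts $\rho(Q(0))$ strictly between the two roots of $P_{Q(n_{AB})}$ and yields $\rho(Q(n_{AB})) > \rho(Q(0))$. Interlacing (Theorem~\ref{thm:int_sr}) then gives $\rho(G) \geq \rho(Q(n_{AB})) > \rho(G_0)$ unless $n_{AB}=0$; that forces every $A$-$B$ pair to be adjacent, making $G$ the join of regular subgraphs on $A$ and $B$ of the required degrees, so $G = G_0$.

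The main obstacle is Step~2 for $p \geq 2$: the inequality $r \leq 2p-2$ does not close by itself and each remaining triple $(r,s,t)$ requires its own spectral comparison via the three-part interlacing described above.
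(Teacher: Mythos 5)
Steps 1 and 3 of your outline are correct, and Step 3 is in fact a genuinely different (and quite clean) route to the structural conclusion: once you know a minimizer has only degrees $n-2$ and $n-3$, your single averaged quotient matrix $Q(n_{AB})$, the identity $K'/C'=2e/n$, and Proposition \ref{reg} replace the paper's two Rayleigh-quotient claims. The problem is Step 2, and it is a genuine gap, not a routine omission. For $p\ge 2$ you only obtain $r\le 2p-2$ and then appeal to ruling out ``the finitely many remaining admissible triples $(r,s,t)$'' by comparing a three-part quotient matrix with $\rho(G_0)$ case by case. This is not a proof: the number of triples grows with $p$ (hence with $n$, since $p$ ranges up to $\frac{n-4}{2}$); the $3\times 3$ quotient matrix of the partition $(A,B,L)$ is not determined by $(r,s,t)$ alone (the $B$--$B$, $B$--$L$ and $L$--$L$ entries and the degree distribution inside $L$ remain free, so there is no single matrix per triple to test); and you supply no uniform inequality that would close all cases. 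Since your Step 3 presupposes $\delta(G)\ge n-3$, the argument is incomplete for every $p\ge 2$.

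The paper's proof avoids this obstacle entirely: it never determines the full degree sequence of a minimizer. From $\Delta\le n-2$ (Theorem \ref{n-1}) and the handshake lemma it extracts only that a putative minimizer $H$ has at least $n-2p-2$ vertices of degree $n-2$, and then uses the lifted principal eigenvector $Y$ of the target graph $G_0$ as a test vector in the Rayleigh quotient of $H$. Comparing the number of edges induced by those $n-2p-2$ vertices with the number of edges of $G^2_{n-2p-2}$ (together with a Kelmans-transformation lemma) forces $\rho(H)>\rho(G_0)$ whenever $H$ is not of the stated form, with no control needed over the degrees of the remaining $2p+2$ vertices. To repair your outline you would need either a genuinely uniform proof that $\delta\ge n-3$ for all admissible $p$, or a test-vector comparison of this kind that is insensitive to the low-degree vertices.
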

\begin{proof}[Proof of Theorem \ref{n-2 to n-3}] 
We will need the next lemma where we use the following graph operation known as the Kelmans transformation. Consider two vertices $u$ and $v$ of a graph $G$. Construct a new graph $G'$ from $G$ by deleting the edge $vw$ and adding the new edge $uw$ for each vertex $w$ that is adjacent to $v$ and is not adjacent to $u$. The vertices $u$ and $v$ are adjacent in $G'$ if and only if they are adjacent in $G$. Kelmans \cite{Kelman} introduced this operation in the context of some problems in random graph theory and later,  Csikv\'{a}ri \cite{Csikvari} proved that the Kelmans transformation increases the spectral radius of the graph, namely that $\rho(G')\geq \rho(G)$.
\begin{lemma}\label{Kelmans1}
 Let $G\in\mathcal{G}_{n,e}$ and let $u,v\in V(G)$ with $d(u)=d$ and $d(v) =d+l$. If $l\geq 2$ and $N(u)\setminus\{v\}\subset N(v)$, then $G$ is not a minimizer in $\mathcal{G}_{n,e}.$
\end{lemma}
\begin{proof}
Suppose  $l\geq 2$ and $N(u)\setminus\{v\}\subset N(v)$. Pick a vertex $w\in N(v)\setminus N(u)$ such that $w\neq u$. Delete the edge $vw$ and add the edge $uw$ to obtain a new graph $H$. We note that $H\in \mathcal{G}_{n,e}$ and the graph $G$ can be obtained from $H$ by doing the Kelmans transformation on the vertices $v$ and $u$. Therefore, $\rho(G)>\rho(H)$. This shows that $G$ is not a minimizer in $\mathcal{G}_{n,e}$.  
\end{proof}

The following two claims prove Theorem \ref{n-2 to n-3}.

{\bf Claim 1} Any graph (other than the claimed minimizers of the form $G_{n-2p-2}^2\vee G_{2p+2}^3$ with some vertices of degree $n-2$ inducing $G_1=G^2_{n-2p-2}$ as its subgraph cannot be a minimizer.
    
{\bf Claim 2} In a minimizer, any subgraph induced by $n-2p-2$ vertices of degree $n-2$ is in join with the rest of the graph.


 {\em Proof of Claim 1} Let $G = G_{n-2p-2}^2\vee G_{2p+2}^3$, $G_1 = G^2_{n-2p-2}$, $G_2=G^3_{2p+2}$, $V_1=V(G_1),$ and $V_2 = V(G_2)$. Let $V(G)=V_1\cup V_2$ in this order. The partition of $V$ into $V_1$ and $V_2$ is equitable and the corresponding quotient matrix is 
 \begin{equation}\label{eq:qgthm310}
 Q_G = \begin{bmatrix}
        n-2p-4&2p+2\\
        n-2p-2&2p-1
    \end{bmatrix}.
\end{equation}
    
Let $x =(x_1,x_2)^T$ be the principal eigenvector of $Q_G$. Because the partition $V=V_1\cup V_2$ is equitable, Theorem \ref{thm:eq_sr} implies that
    \begin{equation}\label{eq:rhoqg310}
    \rho(G)=\rho(Q_G)=\frac{n-5+\sqrt{(n+1)^2-8p-8}}{2}.
    \end{equation}    
Define the vector $\vec{z}=(z_1, z_2, ..., z_n)^T$, where $z_i = x_1$ for $1\leq i\leq |V_1|$ and $z_i = x_2$ for $|V_1|+1\leq i\leq n$, and denote $Y=\frac{\vec{z}}{||\vec{z}||_2}$. Theorem \ref{thm:eq_sr} also implies that $\vec{z}$ and $Y$ are eigenvectors of $A(G)$ corresponding to $\rho(G)$ with $Y$ being the principal eigenvector.
   
We use proof by contradiction for the remaining part of the proof. Let $H=(V(H),E(H))$ be a minimizer in $\mathcal{G}_{n,e}$ that is not of the form $G_{n-2p-2}^2\vee G_{2p+2}^3$. Theorem \ref{n-1} implies that $\Delta(H)\leq n-2$. Denote by $t$ the number of vertices of degree $n-2$ in $H$. Using the Handshaking lemma, we have that $t(n-2) + (n-t)(n-3) \geq 2e = n(n-1)-n-2-2p$, which implies that  $t\geq n-2p-2$. 

Assume that $H$ has an induced subgraph $G_1'$ that is isomorphic to $G_1$ and that is induced by $n-2p-2$ vertices of degree $n-2$. Relabel the vertices of $H$ such that $V(H) =  V(G_1')\cup V(H\setminus G_1')$ in this order.
Therefore,
\begin{align*}
      Y^TA(H)Y - Y^TA(G)Y &= 2\left(\sum_{uv\in E(H)}y_uy_v - \sum_{uv\in E(G)}y_uy_v\right)\\
      &= 2\left(\sum_{uv\in E(H\setminus G_1')}y_uy_v - \sum_{uv\in E(G_2)}y_uy_v\right)\\
      &= 0,
    \end{align*}
where the last equality follows because the eigenvector $Y$ is constant (each entry is $x_2$) on $V(G_2)$ and the number of edges in the subgraph $H\setminus G_1'$ equals the number of edges in the subgraph $G_2$ of $G$. Hence, we have that 
 \begin{equation*}
  \rho(H)\geq Y^TA(H)Y=Y^TA(G)Y = \rho(G),
  \end{equation*}
 with equality if and only if $Y$ is also the principal eigenvector of $H$.
    
Assume that equality happens and $\rho(H)=\rho(G)$. Because $H$ is not of the form $G_{n-2(p+1)}^2\vee G_{2(p+1)}^3$, there exists a vertex $v\in V(H\setminus G_1')$ with $d(v)\not= n-3$. The eigenvalue-eigenvector equation $A(H)Y=\rho(H)Y$ for $v$  gives that 
\begin{equation*}
 (n-2p-2)x_1 + (d(v) - n+2p+2)x_2 = \rho(H)x_2.
 \end{equation*}
Similarly, the eigenvalue-eigenvector equation $A(G)Y=\rho(G)Y$ for a vertex in $V_2$ yields that
\begin{equation*}
(n-2p-2)x_1 + (2p-1)x_2 =\rho(G)x_2.
\end{equation*}
Because $\rho(H)=\rho(G)$, the above equations imply that $d(v)=n-3$, contradiction. Therefore, our assumption $\rho(H)=\rho(G)$ was incorrect and $Y$ is not the principal eigenvector of $H$. Hence, $\rho(H)>\rho(G)$. This finishes the proof of Claim 1.

 {\em Proof of Claim 2}  We use the notation from the previous proof and $x=(x_1,x_2)^T$ is the principal eigenvector of $Q_G$ corresponding to $\rho(G)$ (see \eqref{eq:qgthm310}). After solving the following eigenvalue-eigenvector equations     
 \begin{align*}
        &(n-2p-4)x_1 + (2p+2)x_2 = \rho(G)x_1\\
        &(n-2p-2)x_1 + (2p-1)x_2 =\rho(G)x_2,
\end{align*} 
we obtain that $\frac{x_1}{x_2}=\frac{2p+2}{\rho(G)-n+2p+4}$. Because $\rho(G)<n-2$ (see  \eqref{eq:rhoqg310}), we deduce that $\frac{2p+2}{\rho(G)-n+2p+4}>1$ and therefore, $x_1>x_2$.

Let $H$ be a minimizer in $\mathcal{G}_{n,e}$. Consider a subset $V_1(H)$ of $n-2p-2$ vertices of degree $n-2$ in $H$. Relabel the vertices of $H$ such that $V = V_1(H) \cup V\setminus V_1(H)$ in this order. By Claim 1, $H$ does not have $G_1$ as a subgraph induced by some of its vertices of degree $n-2$. Therefore any subset of $n-2p-2$ vertices of $H$ of degree $n-2$ induces a subgraph that has more edges than the number of edges in $G_1$. Assume that $|E(H[V_1])| = k  + |E(G_1)|$ for some $k\in \mathbf{N}$. Because the vertices of $G_1$ have degree $n-2$ in $G$, to add an edge in $G_1$, we need to delete two cross edges between $G_1$ and $G_2$ to maintain their degrees and add an edge to $G_2$ to keep the number of edges fixed. Therefore, 
\begin{align*}
  Y^TA(H)Y - Y^TA(G)Y &= 2\left(\sum_{uv\in E(H)}y_uy_v - \sum_{uv\in E(G)}y_uy_v\right)\\
 &= 2\left(\sum_{uv\in E(H[V_1(H)])}y_uy_v - \sum_{uv\in E(G_1)}y_uy_v\right)\\
 &+ 2\left( \sum_{uv\in E(H[V_1(H)], H[V\setminus V_1(H)])}y_uy_v - \sum_{uv\in E(G_1, G_2)}y_uy_v\right)\\
 &+ 2\left(\sum_{uv\in E(H[V\setminus V_1(H)])}y_uy_v - \sum_{uv\in E(G_2)}y_uy_v\right)\\
 &= 2kx_1^2  - 4kx_1x_2 + 2kx_2^2\\
 &= 2k(x_1-x_2)^2.
\end{align*}
Thus, we obtain that
\begin{equation*}
\rho(H)\geq Y^TA(H)Y = Y^TA(G)Y + 2k(x_1-x_2)^2 >\rho(G).
\end{equation*}
This completes the proof of Claim 2 and of Theorem 3.10.
\end{proof}

The proof of the following result (for odd $n$) is similar to the previous one and we will omit it.
\begin{theorem}\label{n-2 to n-3 2}
If $n\ge 5$ is odd, $1\leq p\leq \frac{n-3}{2}$ and $e = {n\choose 2} - (\frac{n+3}{2}+p)$, then $\rho_{min}(n,e) = \frac{n-5+\sqrt{n(n+2)-8p-3}}{2}$. The minimizers in $\mathcal{G}_{n,e}$ are of the form $G_{n-2(p+1)+1}^2\vee G_{2(p+1)-1}^3$.  
\end{theorem}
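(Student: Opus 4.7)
The plan is to mirror the proof of Theorem \ref{n-2 to n-3} line for line, substituting the even-parity quantities $(n-2p-2,\,2p+2)$ by their odd-parity counterparts determined by the statement. Denote the claimed minimizer by $G:=G_{n-2(p+1)+1}^{2}\vee G_{2(p+1)-1}^{3}$. First I would confirm that $G$ exists for all admissible $p$: the hypothesis $1\leq p\leq\tfrac{n-3}{2}$ keeps the regularities nonnegative, and the parity of $n$ makes both handshaking parity constraints on the regular factors pass automatically. Then I would compute $\rho(G)$ from the equitable partition into the two factors; the quotient matrix has trace $n-5$ and its largest eigenvalue is exactly $\tfrac{n-5+\sqrt{n(n+2)-8p-3}}{2}$ by Theorem \ref{thm:eq_sr}.

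The next step is to pin down the degree sequence of an arbitrary minimizer $H\in\mathcal{G}_{n,e}$. Theorem \ref{n-1} rules out vertices of degree $n-1$. A handshaking count in the spirit of Proposition \ref{p=n+2/2}, combined with Lemma \ref{not adjacent n-2}, rules out vertices of degree at most $n-4$: any such vertex is non-adjacent to at least one vertex that by the arithmetic must have degree $n-2$, contradicting Lemma \ref{not adjacent n-2}. Thus every vertex of $H$ has degree $n-2$ or $n-3$, and a single linear equation in the handshaking count fixes the exact numbers of each.

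The heart of the argument is to transport the two claims of Theorem \ref{n-2 to n-3} verbatim. Let $x=(x_1,x_2)^{T}$ be the principal eigenvector of the quotient matrix $Q_G$; the eigenvalue equations together with the bound $\rho(G)<n-2$ give $x_1>x_2$. Let $Y\in\mathbb{R}^{n}$ denote the constant-on-parts lifting of $x$. In the analog of Claim 1, if the subgraph of $H$ induced on its degree-$(n-2)$ vertices is isomorphic to the first factor of $G$, the Rayleigh identity yields $Y^{T}A(H)Y=Y^{T}A(G)Y$; forcing equality $\rho(H)=\rho(G)$ would make $Y$ the principal eigenvector of $H$, and then the eigenvalue--eigenvector equation at any vertex in the complementary part pins its degree to $n-3$, forcing $H\cong G$. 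In the analog of Claim 2, if the induced subgraph on the degree-$(n-2)$ vertices carries $k\geq 1$ more edges than the first factor, the same edge-exchange accounting as in Theorem \ref{n-2 to n-3} gives $Y^{T}A(H)Y - Y^{T}A(G)Y = 2k(x_1-x_2)^{2}>0$, so $\rho(H)>\rho(G)$.

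The main technical obstacle I anticipate is the degree-sequence reduction step: the odd-$n$ handshaking remainders are less clean than in the even case and may require an additional subcase based on whether a putative degree-$(n-4)$ vertex is adjacent or non-adjacent to a specific degree-$(n-2)$ vertex (much like the sub-case split of Proposition \ref{p=n+2/2}). Once the degree sequence is determined, the remainder of the argument is a symbol-for-symbol translation of the even case and introduces no essentially new ideas.
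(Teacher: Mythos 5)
Your overall strategy --- transporting Claims 1 and 2 of Theorem \ref{n-2 to n-3} to the odd case --- is exactly what the paper intends (it omits the proof with precisely that remark), and the core Rayleigh-quotient argument does carry over. Two points need attention, however.

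First, your opening step, ``confirm that $G$ exists for all admissible $p$,'' would in fact fail if carried out: $G_{n-2(p+1)+1}^{2}\vee G_{2(p+1)-1}^{3}$ is the join of an $(n-2p-3)$-regular graph on $n-2p-1$ vertices with a $(2p-2)$-regular graph on $2p+1$ vertices, and a direct count gives $\frac{n^{2}-2n-2p-1}{2}=\binom{n}{2}-\bigl(\frac{n+1}{2}+p\bigr)$ edges, one more than the stated $e=\binom{n}{2}-\bigl(\frac{n+3}{2}+p\bigr)$. The stated value of $\rho_{min}$ does match this graph (the quotient matrix has trace $n-5$ and determinant $2p+7-3n$, giving discriminant $n(n+2)-8p-3$), and at $p=\frac{n-3}{2}$ the corrected edge count recovers Proposition \ref{e=n-1 choose 2}; so the theorem must be read with $e=\binom{n}{2}-(\frac{n+1}{2}+p)$, and your proof has to be run for that $e$ (in particular the handshake bound becomes $t\ge n-2p-1$). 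Asserting that the parity and consistency checks ``pass automatically'' glosses over the one place where the odd case genuinely differs from the even one, and is the check that would have exposed this.

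Second, your degree-sequence reduction is both unnecessary and, as sketched, incorrect. The proof of Theorem \ref{n-2 to n-3} never determines the full degree sequence: after Theorem \ref{n-1} gives $\Delta(H)\le n-2$, it only needs the handshake lower bound on the number $t$ of degree-$(n-2)$ vertices, and Claims 1 and 2 then apply to any set of that many such vertices irrespective of the remaining degrees (equality in Claim 1 is what forces the remaining degrees to be $n-3$). Your proposed shortcut --- that a vertex of degree at most $n-4$ must be non-adjacent to some vertex of degree $n-2$, contradicting Lemma \ref{not adjacent n-2} --- does not hold for all $p$ in range: when $p$ is close to $\frac{n-3}{2}$ there may be as few as two or three vertices of degree $n-2$, and a low-degree vertex can be adjacent to all of them. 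Drop that step and follow the even-case proof verbatim.
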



\subsection{The case $e={n-1\choose 2}-2$}

Let $G_1$ be any of the two graphs in Figure \ref{n6e8 minimizers}. Construct a new graph $G=(V,E)$ by taking the join of $G_1$ with a $(n-9)$-regular graph of order $n-6.$ The graph $G$ has order $n$ and size ${n-1\choose 2}-2.$
\begin{proposition}\label{n-3 regular -1}
For $n\geq 9$ and $e = {n-1\choose 2}-2$, $G = G_1 \vee G^3_{n-6}$ is the minimizer in $\mathcal{G}_{n,e}$ and $\rho_{min}(n,e)$ equals the largest root of the polynomial
$$P(x) = x^3 + x^2(7-n) + 4x(4-n) +6-2n.$$
\end{proposition}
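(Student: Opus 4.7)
The plan is first to compute $\rho(G)$ for the claimed candidates via a three-cell equitable partition, then to show that every other graph in $\mathcal{G}_{n,e}$ has strictly larger spectral radius, by first pinning down the degree sequence and then the structure, following the template of the proof of Theorem \ref{n-2 to n-3}.

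For the candidate, I will use the fact that in both graphs $G_1$ of Figure \ref{n6e8 minimizers}, the four vertices of degree $3$ form a $C_4$ and each of the two degree-$2$ vertices is adjacent to exactly two vertices of this $C_4$. Partition $V(G)$ into $V_1$ (the two degree-$2$ vertices of $G_1$), $V_2$ (the four degree-$3$ vertices of $G_1$), and $V_3=V(G_{n-6}^3)$. A direct incidence count shows this partition is equitable with quotient matrix
\begin{equation*}
B=\begin{bmatrix}0 & 2 & n-6\\ 1 & 2 & n-6\\ 2 & 4 & n-9\end{bmatrix},
\end{equation*}
whose characteristic polynomial, after cofactor expansion, is exactly $P(x)$. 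Theorem \ref{thm:eq_sr} then gives $\rho(G)=\rho(B)$, the largest root of $P$.

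For any minimizer $H\in\mathcal{G}_{n,e}$, the identity $\sum_v\bigl((n-3)-d(v)\bigr)=n(n-3)-2e=2$ combined with Theorem \ref{n-1} (applicable because $\binom{n-1}{2}-2<\binom{n}{2}-n/2$ for $n\geq 9$) gives $\Delta(H)\leq n-2$ and strongly constrains the degree sequence. I would rule out a vertex of degree $n-2$ by combining Lemma \ref{not adjacent n-2} with a handshake argument: every vertex of degree at most $n-4$ would have to be adjacent to each degree-$(n-2)$ vertex, and this is incompatible with the global edge count together with $d(v)\geq \delta(H)$. A vertex of degree at most $n-5$ can then be excluded via a Kelmans-type transformation (Lemma \ref{Kelmans1}) applied to such a vertex and a vertex of maximum degree whose closed neighborhood contains it. These two exclusions, together with the degree identity above, force the degree sequence $(n-3)^{n-2},(n-4)^{2}$.

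With the degree sequence in hand, let $b_1,b_2$ be the two vertices of degree $n-4$ and $V_2:=V(H)\setminus\{b_1,b_2\}$. I first show $b_1\not\sim b_2$: otherwise Theorem \ref{local switching} applied to the edge $b_1b_2$ and a suitably chosen non-edge inside $V_2$ gives a graph of no larger spectral radius in which they are non-adjacent. Each $b_i$ then misses exactly two vertices $S_i\subseteq V_2$, and an analogous switching forces $S_1\cap S_2=\emptyset$, producing four distinguished vertices $c_1,\dots,c_4$ with each $b_i$ adjacent to exactly two of them. A residual degree count then forces $\{c_1,\dots,c_4\}$ to induce a $C_4$, the set $V_3:=V_2\setminus\{c_1,\dots,c_4\}$ to induce an $(n-9)$-regular graph, and the bipartite graphs $V_3$--$\{c_1,\dots,c_4\}$ and $V_3$--$\{b_1,b_2\}$ to be complete. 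The two possible ways to match $\{b_1,b_2\}$ into the $C_4$ (adjacent pair versus opposite pair of $C_4$-vertices) are precisely the two graphs $G_1$ in the figure. A final principal-eigenvector lifting of $B$ to a piecewise constant vector $Y$ on $V(H)$, as in the proof of Theorem \ref{n-2 to n-3}, yields $\rho(H)\geq Y^TA(H)Y=Y^TA(G)Y=\rho(G)$, with strict inequality unless $H$ has the claimed structure. The main obstacle I expect is the structural stage: carefully constructing the switchings that enforce $b_1\not\sim b_2$, then $S_1\cap S_2=\emptyset$, and finally pinning down the $C_4$-structure on $\{c_1,\dots,c_4\}$ and the complete join to $V_3$, all while maintaining connectivity and the required degree sequence. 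The equitable partition calculation and the final eigenvector comparison are routine once the structure is fixed.
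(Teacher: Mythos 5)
Your overall architecture --- equitable three-cell partition for the candidate, then degree sequence, then structure, then an eigenvector/Rayleigh comparison --- is the same as the paper's, and your computation of $\rho(G)$ via the quotient matrix $B$ is identical to the paper's and correct. The genuine gap is at the degree-sequence stage, which is exactly where the paper invests almost all of its effort. You claim a vertex of degree $n-2$ can be excluded by ``combining Lemma \ref{not adjacent n-2} with a handshake argument.'' This does not work. Consider a graph $H$ with exactly one vertex $v$ of degree $n-2$ whose unique non-neighbour $u$ has degree $n-3$, together with three further vertices of degree $n-4$, all adjacent to $v$, and all remaining vertices of degree $n-3$: this satisfies the identity $\sum_{w\in V}\bigl((n-3)-d(w)\bigr)=2$, the bound $\Delta\leq n-2$ from Theorem \ref{n-1}, and the conclusion of Lemma \ref{not adjacent n-2} (every vertex of degree at most $n-4$ is adjacent to $v$, and the single vertex $v$ misses has degree $n-3$, which the lemma permits). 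No counting contradiction arises. The paper handles precisely this situation with a separate and substantial Lemma \ref{exactly one n-2}, whose hard case (non-neighbour of degree $n-3$) requires an iterated edge-rotation argument, and it disposes of two or more degree-$(n-2)$ vertices by explicit interlacing with $2\times 2$ quotient matrices and a polynomial comparison against $P$. Neither step reduces to counting, and your sketch supplies no substitute.

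Two further soft spots. Excluding a vertex $u$ of degree at most $n-5$ via Lemma \ref{Kelmans1} requires a vertex $y$ with $N(u)\setminus\{y\}\subset N(y)$ and $d(y)\geq d(u)+2$; such a $y$ need not exist, and the paper instead eliminates the surviving degree sequence ($n-1$ vertices of degree $n-3$ and one of degree $n-5$) by another interlacing computation. Likewise, invoking Theorem \ref{local switching} to force $b_1\not\sim b_2$ and $S_1\cap S_2=\emptyset$ requires sign information about the principal eigenvector of the \emph{unknown} minimizer $H$, which you do not have; the paper again uses interlacing (the partition into $\{x,y\}$ and the rest, with $x\sim y$ both of degree $n-4$) precisely to avoid needing that eigenvector. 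Your candidate computation and the final structural/Rayleigh steps are in the spirit of the paper's argument, but as written the proof has a real hole before the structure stage is ever reached.
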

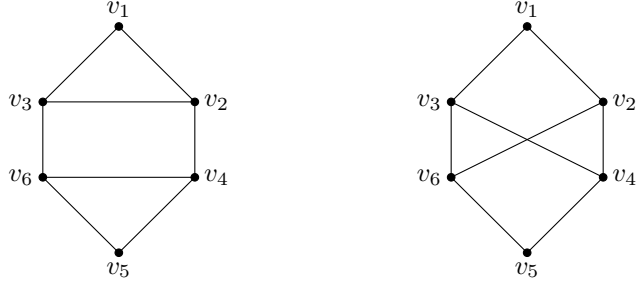
\begin{figure}[ht!]\label{fig:min_n6e8}
    \centering
\begin{tikzpicture}
\foreach \x in {(0,1), (0,-2), (1,-1),(1,0),(-1,0),(-1,-1)}{
 \draw[fill] \x circle[radius = 0.05cm]; }
 \draw (0,1)--(1,0)--(1,-1)--(0,-2)--(-1,-1)--(-1,0)--(0,1);
 \draw (-1,0)--(1,0);
 \draw(-1,-1)--(1,-1);
 \node[right] at (1,0) {$v_2$};
 \node[above] at (0,1) {$v_1$};
 \node[right] at (1,-1) {$v_4$};
 \node[left] at (-1,0) {$v_3$};
 \node[left] at (-1,-1) {$v_6$};
 \node[below] at (0,-2) {$v_5$};
\end{tikzpicture}
\hspace{2cm}
\begin{tikzpicture}
\foreach \x in {(0,1), (0,-2), (1,-1),(1,0),(-1,0),(-1,-1)}{
 \draw[fill] \x circle[radius = 0.05cm]; }
 \draw (0,1)--(1,0)--(1,-1)--(0,-2)--(-1,-1);
 \draw (-1,0)--(1,-1);
 \draw (-1,0)--(0,1);
 \draw(-1,-1)--(1,0);
\draw (-1,0)--(-1,-1);
 \node[right] at (1,0) {$v_2$};
 \node[above] at (0,1) {$v_1$};
 \node[right] at (1,-1) {$v_4$};
 \node[left] at (-1,0) {$v_3$};
 \node[left] at (-1,-1) {$v_6$};
 \node[below] at (0,-2) {$v_5$};
\end{tikzpicture}
 \caption{Minimizers on 6 vertices and 8 edges}
    \label{n6e8 minimizers}
\end{figure}
We will use the following lemma.
\begin{lemma}\label{exactly one n-2}
If $e<\binom{n}{2}-n$, then a minimizer graph in $\mathcal{G}_{n,e}$ cannot have exactly one vertex of degree $n-2$.
\end{lemma}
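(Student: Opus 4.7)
The plan is to argue by contradiction. Suppose $G\in\mathcal{G}_{n,e}$ is a minimizer with a unique vertex $v^*$ of degree $n-2$, and let $u^*$ be its unique non-neighbor. Since $e<\binom{n}{2}-n<\binom{n}{2}-n/2$, Theorem \ref{n-1} gives $\Delta(G)\le n-2$, so every vertex other than $v^*$ has degree at most $n-3$. I then split on $d(u^*)$.

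If $d(u^*)\le n-4$, the non-adjacent pair $(v^*,u^*)$ satisfies $d(v^*)=n-2$ and $d(u^*)=n-i$ with $i\ge 4$, and Lemma \ref{not adjacent n-2} yields an immediate contradiction. If $d(u^*)=n-3$, let $z$ be the unique second non-neighbor of $u^*$. If $d(z)\le n-4$, Lemma \ref{Kelmans1} applies with $(u,v)=(z,v^*)$: the degree gap $l=(n-2)-d(z)\ge 2$ is clear, and the inclusion $N(z)\setminus\{v^*\}\subseteq V\setminus\{z,v^*,u^*\}=N(v^*)$ follows from $z\not\sim u^*$, again contradicting minimality.

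The remaining subcase $d(u^*)=d(z)=n-3$ is the main obstacle. Using parity (both $2e$ and $n^2-3n$ are even), the strict hypothesis sharpens to $2e\le n^2-3n-2$, and subtracting $d(v^*)+d(u^*)+d(z)=3n-8$ gives $\sum_{w\in V\setminus\{v^*,u^*,z\}}d(w)\le (n-3)^2-3$, so the other $n-3$ vertices carry a total deficit of at least three below $n-3$. Letting $w^*$ denote the second non-neighbor of $z$, the adjacencies among $\{v^*,u^*,z,w^*\}$ are fully forced: only $v^*z$, $v^*w^*$, $u^*w^*$ are edges. I then apply Theorem \ref{local switching} with $s=u^*$, $t=w^*$, $u=z$, $v=v^*$; the switched graph $G'$, obtained by replacing $u^*w^*,zv^*$ with $u^*v^*,zw^*$, is isomorphic to $G$ via the transposition $u^*\leftrightarrow z$, so $\rho(G')=\rho(G)$. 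When the sign condition $(x_{u^*}-x_z)(x_{v^*}-x_{w^*})\ge 0$ holds, the equality clause forces the principal eigenvector entries to satisfy $x_{u^*}=x_z$ and $x_{v^*}=x_{w^*}$; substituting these into the difference of the eigenvalue equations at $v^*$ and $w^*$, every term cancels except $\sum_{a\in A}x_a$, where $A$ is the set of non-neighbors of $w^*$ outside $\{w^*,z\}$. Perron positivity then forces $A=\emptyset$, so $d(w^*)=n-2$, contradicting the uniqueness of $v^*$.

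The hard point is therefore this last subcase, specifically the scenario in which the sign condition of Theorem \ref{local switching} fails. I would handle it by running the mirrored argument with Csikv\'ari's Kelmans transformation on the pair $(u^*,v^*)$: this transformation replaces $v^*z$ by $u^*z$ and produces a graph again isomorphic to $G$ under $u^*\leftrightarrow v^*$, so Csikv\'ari's inequality is tight and a parallel eigenvector identity can be extracted. The essential difficulty throughout is the rigidity of the adjacency skeleton around $\{v^*,u^*,z\}$, which blocks every direct combinatorial application of Lemmas \ref{Kelmans1} and \ref{not adjacent n-2} and forces one to combine the equality clauses of the switching and Kelmans theorems with the Perron positivity of the principal eigenvector.
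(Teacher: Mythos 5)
Your first two subcases are fine and coincide with the paper's: when $d(u^*)\le n-4$ you invoke Lemma \ref{not adjacent n-2}, and when $d(u^*)=n-3$ but its second non-neighbor $z$ has degree at most $n-4$ you invoke Lemma \ref{Kelmans1}; both applications check out. The gap is in the subcase $d(u^*)=d(z)=n-3$, and it is worse than you indicate, because the sign condition of Theorem \ref{local switching} essentially never holds there. Since the non-neighbors of $u^*$ are $\{v^*,z\}$ and those of $z$ are $\{u^*,w^*\}$, subtracting the eigenvalue equations at $u^*$ and $z$ gives
\begin{equation*}
\rho\,(x_{u^*}-x_{z})=x_{w^*}-x_{v^*},
\qquad\text{hence}\qquad
(x_{u^*}-x_{z})(x_{v^*}-x_{w^*})=-\frac{(x_{v^*}-x_{w^*})^2}{\rho}\le 0,
\end{equation*}
with equality if and only if $x_{v^*}=x_{w^*}$. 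So Theorem \ref{local switching} applies only in the degenerate situation $x_{v^*}=x_{w^*}$ --- where, as you note, the eigenvalue equations at $v^*$ and $w^*$ alone already force $d(w^*)=n-2$ and give the contradiction, so the switching theorem is not even needed --- and in the generic situation $x_{v^*}\neq x_{w^*}$ your main argument does not start. Relabelling the four vertices does not help: the only other admissible switch inside $\{v^*,u^*,z,w^*\}$ produces the same negated square.

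The entire weight of the lemma therefore falls on your fallback, which is only a sketch. The Kelmans transformation on $(u^*,v^*)$ is the single rotation deleting $v^*z$ and adding $u^*z$, and the resulting graph is indeed isomorphic to $G$; but Csikv\'ari's theorem as used in this paper has no equality clause, and tightness of $\rho(G')\ge\rho(G)$ by itself yields no ``parallel eigenvector identity'' --- at best Theorem \ref{rotation} tells you $x_{u^*}<x_{v^*}$, which is no contradiction. Your degree-deficit computation is stated but never used. The paper closes exactly this case by a different, more laborious route: it selects a neighbor $w_1$ of $v^*$ with $z\not\sim w_1$, iterates through vertices of degree $n-3$ until it finds some $w_k$ of degree at most $n-4$ (the iteration terminates precisely because $e<\binom{n}{2}-n$ forbids all remaining vertices from having degree at least $n-3$), performs an explicit edge rotation off $v^*$, and verifies the hypotheses of Theorems \ref{rotation} and \ref{local switching} by direct estimates on the eigenvector entries. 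As written, your proof is incomplete in its main case.
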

\begin{proof}
Assume that $e<\binom{n}{2}-n$. Let $G$ be a minimizer on $n$ vertices and $e$ edges. We use proof by contradiction. Suppose that $G$ has a vertex $v$ with degree $n-2$. Denote by $u$ the vertex of $G$ that is not adjacent to $v$. 

\noindent{\bf Case 1} Suppose $d(u)<n-3$. Then $G$ cannot be a minimizer by Lemma \ref{Kelmans1}.

\noindent{\bf Case 2} Suppose $d(u)=n-3$. Let $z$ be a non-neighbor of $u$ such that $z\neq v$. If $d(z)<n-3$, then $G$ cannot be a minimizer by Lemma \ref{Kelmans1}. If $d(z) =n-3$, then let $w_1\in N(v)$ such that $z\not\sim w_1.$

If $d(w_1)\leq n-4.$ Choose another vertex $y\in V\setminus\{z\}$ such that $w_1\not\sim y$  with least possible degree. Delete the edge $vy$ and add $yw_1$  to obtain a new graph $G'$. Let $x$ be the principal eigenvector of $G'$. If $x_{w_1}\leq x_v$, then $\rho(G)>\rho(G')$ by Theorem \ref{rotation} and we are done. Else if $x_{w_1}>x_v$, we show $(x_{w_1} - x_v)(\sum_{\beta\in N(v)-N(w_1)-w_1}x_{\beta} - x_u)\geq 0.$ Let $\rho =\rho(G')$ be the spectral radius of $G'$. We note that
\begin{align*}
      (x_{w_1} - x_v)\left(\sum_{{\scriptscriptstyle \beta\in N(v)-N(w_1)-w_1}}x_{\beta} - x_u\right)\geq 0
      &\iff\rho\left(\sum_{{\scriptscriptstyle \beta\in N(v)-N(w_1)-w_1}}x_{\beta} - x_u\right)\geq 0\\
      &\iff \rho\left(\sum_{{\scriptscriptstyle \beta\in N(v)-N(w_1)-w_1 -z}}x_{\beta} \right)+\rho(x_z -x_u) \geq 0\\
      &\iff \rho\left(\sum_{{\scriptscriptstyle\beta\in N(v)-N(w_1)-w_1 -z}}x_{\beta} \right) +(x_v - x_{w_1}) \geq 0.
  \end{align*}
Since after the edge rotation $d(w_1)\leq n-3, N(v)-N(w_1)-w_1 -z\neq\emptyset$. Let $t\in N(v)-N(w_1)-w_1 -z$. To prove the last inequality above, it suffices to show $(x_v-x_{w_1}) +\rho x_t\geq 0.$ Which is true as  $$(x_v -x_{w_1}) +\rho x_t \geq (x_v-x_{w_1}) + x_v +x_z +x_u\geq (x_z-x_{w_1}) +x_u\geq x_u(1-\frac{1}{\rho})> 0.$$

If $d(w_1)=n-3.$ Let $w_2\not=z\in V$ such that $w_2\not\sim w_1.$ If $d(w_2)\leq n-4.$  Choose another vertex $y\in V\setminus\{w_1\}$ such that $w_2\not\sim y$  with least possible degree  and do the $vy$ edge rotation to $w_2y$ to obtain a new graph $G'$. Let $x$ be the principal eigenvector of $G'$.  If $x_{w_2}\leq x_v$, we are done by Theorem \ref{rotation}. Else if $x_{w_2}>x_v$, we show $(x_{w_2} - x_v)(\sum_{\beta\in N(v)-N(w_2)-w_2}x_{\beta} - x_u)\geq 0.$ Let $\rho = \rho(G')$ be the spectral radius of $G'$. We note that
\begin{align*}
      (x_{w_2} - x_v)&\left(\sum_{\beta\in N(v)-N(w_2)-w_2}x_{\beta} - x_u\right)\geq 0\\
      \iff\rho &\left(\sum_{\beta\in N(v)-N(w_2)-w_2}x_{\beta} - x_u\right)\geq 0\\
      \iff \rho &\left(\sum_{\beta\in N(v)-N(w_2)-w_2 -w_1}x_{\beta} \right)+\rho(x_{w_1} -x_u) \geq 0\\
      \iff \rho &\left(\sum_{\beta\in N(v)-N(w_2)-w_2 -{w_1}}x_{\beta} \right) +(x_v +x_u  - x_{w_1} -x_{w_2}) \geq 0.
  \end{align*}
Since after edge rotation $d(w_2)\leq n-3, N(v)-N(w_2)-w_2 -w_1\neq\emptyset$. Let $t\in N(v)-N(w_2)-w_2 -w_1.$ To prove the last inequality above, it suffices to show $(x_v+x_u-x_{w_1}-x_{w_2}) +\rho x_t\geq 0.$ Which is $$(x_v+x_u -x_{w_1}-x_{w_2}) +\rho x_t \geq (x_v+x_u-x_{w_1}-x_{w_2}) + x_v +x_z +x_u + x_{w_1}= 2(x_v+x_u) +x_z - x_{w_2}> 0$$ as $N(w_2)\subset N(v)\cup N(u)\cup N(z).$

If $d(w_2)=n-3$, we keep repeating the same process as above until at some $k^{th}$ iteration we have $d(w_k)\leq n-4.$ Then either $x_{w_k}\leq x_v$ in the new graph $G'$ obtained by edge rotation, or else we argue $(x_{w_k}-x_v)(\sum_{\beta\in N(v)-N(w_k)-w_k}x_\beta-x_u)\geq 0.$ Which is true as $\rho(\sum_{\beta\in N(v)-N(w_k)-w_k}x_\beta-x_u)\geq (x_v+x_u+x_z -x_{w_{k-2}}-x_{w_{k-1}}-x_{w_k})+x_v+x_u+x_z+\sum_{i=1}^{k-1}x_{w_i}= 2(x_v+x_u+x_z)+\sum_{i=1}^{k-3}x_{w_i} -x_{w_k}\geq 0.$ The process stops at most at the $(n-3)^{th}$ iteration with $d(w_{n-3})\leq n-4.$ Because if $d(w_{n-3}) = n-3$ too, then $G$ has $(n-1) $ vertices of degree $n-3$ and 1 vertex of degree $n-2$, which is not possible as $e<{n\choose 2} -n.$
\end{proof}

\begin{proposition}
 For $n\geq 9$ and $e = {n-1\choose 2}-2$, if $G$ is a minimizer in $\mathcal{G}_{n,e}$, then $\Delta(G)-\delta(G)= 1.$ 
\end{proposition}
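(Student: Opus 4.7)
The plan is to argue by contradiction: assume $G\in\mathcal{G}_{n,e}$ is a minimizer with $\Delta(G)-\delta(G)\geq 2$ and derive a contradiction by producing a graph in $\mathcal{G}_{n,e}$ of strictly smaller spectral radius. Since $2e=(n-1)(n-2)-4=n^2-3n-2$, the average degree $n-3-2/n$ is not an integer for $n\geq 9$, so $G$ is irregular and hence $\Delta-\delta\geq 1$. A direct check gives $e<\binom{n}{2}-n/2$, so Theorem \ref{n-1} yields $\Delta(G)\leq n-2$. Writing $d_v=(n-3)+\epsilon_v$, we have $\epsilon_v\leq 1$ and $\sum_v\epsilon_v=-2$; in particular $\Delta\leq n-4$ would force $\sum_v\epsilon_v\leq -n$, impossible for $n\geq 9$. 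Hence under $\Delta-\delta\geq 2$ only two configurations remain: \textbf{(I)} $\Delta=n-2$ with $\delta\leq n-4$, or \textbf{(II)} $\Delta=n-3$ with $\delta\leq n-5$. (The borderline case $\Delta=n-3$, $\delta=n-4$ is already almost regular, forcing exactly two vertices of degree $n-4$ and $n-2$ of degree $n-3$.)

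For case (I), Lemma \ref{exactly one n-2} (applicable since $e<\binom{n}{2}-n$) gives at least two vertices of degree $n-2$ in $G$. Fix such a vertex $v$ with unique non-neighbor $w$, and any vertex $u$ with $d(u)\leq n-4$. Lemma \ref{not adjacent n-2} forces $u\sim v$, and Lemma \ref{Kelmans1} applied to the pair $(u,v)$ forces $u\sim w$ as well, since otherwise $N(u)\setminus\{v\}\subseteq V\setminus\{v,w\}=N(v)$ and $G$ would not be a minimizer. Moreover $d(w)\geq n-3$ by Lemma \ref{not adjacent n-2}. Having pinned down this rigid local structure, I would construct a candidate $G'\in\mathcal{G}_{n,e}$ by a carefully chosen edge modification (for instance rotating some edge $vy$ with $y\in N(v)\setminus\{w\}$ well placed with respect to the principal eigenvector) and apply Theorem \ref{rotation} or Theorem \ref{local switching} — comparing the principal-eigenvector entries of neighbors of the modified vertex in $G'$ — to conclude $\rho(G')<\rho(G)$, mirroring the bookkeeping in the proof of Lemma \ref{exactly one n-2}.

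For case (II), applying Lemma \ref{Kelmans1} to a vertex $v$ of degree $n-3$ with non-neighbors $\{w_1,w_2\}$ and a vertex $u$ of degree at most $n-5$ yields: if $u\sim v$, then $u$ is adjacent to at least one of $w_1, w_2$; and if $u\not\sim v$, then $u\in\{w_1,w_2\}$ and $u$ is adjacent to the other. Combined with an analogous iterated edge-rotation argument — again using Theorem \ref{rotation} on the candidate graph — this will produce a graph in $\mathcal{G}_{n,e}$ of strictly smaller spectral radius, contradicting minimality.

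The hard part is the iterated edge-rotation step in each case. As illustrated in the proof of Lemma \ref{exactly one n-2}, a single candidate rotation $G'$ may fail the principal-eigenvector inequality required by Theorem \ref{rotation}, so one must either establish the opposite inequality by direct estimation (bounding eigenvector entries via the eigenvalue-eigenvector equation at well-chosen vertices) or chain several rotations, tracking the degree of the modified vertex and arguing that the process terminates with a strict net decrease in $\rho$ — all while preserving connectivity, vertex count, and edge count. Extracting the terminating ``at most $(n-3)$-rd iteration'' bound, as in Lemma \ref{exactly one n-2}, will be the most delicate step.
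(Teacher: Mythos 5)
Your reduction to the two bad configurations is correct and consistent with the paper's bookkeeping: from $2e=n^2-3n-2$ one gets $\sum_v\epsilon_v=-2$ with $\epsilon_v\le 1$ (Theorem \ref{n-1} gives $\Delta\le n-2$ since $e=\binom{n}{2}-n-1<\binom{n}{2}-\frac{n}{2}$), so after Lemma \ref{exactly one n-2} the only configurations violating $\Delta-\delta=1$ are (I) at least two vertices of degree $n-2$, or (II) one vertex of degree $n-5$ and $n-1$ vertices of degree $n-3$. The local structure you extract from Lemmas \ref{not adjacent n-2} and \ref{Kelmans1} is also valid. But the proposal stops exactly where the real work begins: in both cases the step that actually eliminates the configuration is only announced (``a carefully chosen edge modification'', ``an analogous iterated edge-rotation argument'') rather than carried out. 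This is a genuine gap, not a stylistic one: Theorem \ref{rotation} applies only when the relevant principal-eigenvector inequality holds, and when it fails the fallback requires the delicate entrywise eigenvector estimates and the termination bound of the kind appearing in the proof of Lemma \ref{exactly one n-2}; none of that is supplied, and it is not evident that the chain of rotations you describe closes.

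The paper finishes this step by an entirely different and much shorter mechanism: it fixes the explicit candidate $G=G_1\vee G^3_{n-6}\in\mathcal{G}_{n,e}$, whose spectral radius is the largest root of $P(x)=x^3+x^2(7-n)+4x(4-n)+6-2n$ (computed from an equitable three-part partition), and then uses quotient-matrix interlacing (Theorem \ref{thm:int_sr}). For any $G'$ with two vertices $u,v$ of degree $n-2$, the two-part partition $\{u,v\}$ versus the rest (in both the adjacent and non-adjacent subcases) has quotient matrix whose largest eigenvalue $\rho(Q)$ satisfies $P(\rho(Q))>0$, hence $\rho(G')\ge\rho(Q)>\rho(G)$; the same device disposes of the degree-$(n-5)$ configuration. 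No edge rotation or eigenvector comparison is needed, because the comparison is made against a concrete graph with a computable spectral radius. To salvage your approach you would either have to introduce such a candidate graph (at which point the interlacing argument is strictly easier than what you propose) or fully execute the iterated-rotation estimates, which is substantially harder and currently absent.
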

\begin{proof} Let $G=(V,E)$ be $G_1\vee G^3_{n-6}.$ Consider the partition of $G$ into three parts : $\{v_1, v_5\}, \{v_2, v_3, v_4, v_6\},$ and $ V(G^3_{n-6}).$ The quotient matrix of this partition is
\begin{equation*}
Q_G = \begin{bmatrix}
        0&2&n-6\\
        1&2&n-6\\
        2&4&n-9
    \end{bmatrix}
\end{equation*}    
    and its characteristic polynomial is
    \begin{equation}
      P(x) = x^3 + x^2(7-n) + 4x(4-n) +6-2n.  
    \end{equation}
    Since the partition is equitable, Theorem \ref{thm:eq_sr} implies that $\rho(G)=\rho(Q_G)$.  Let $X = (x_1,x_2,x_3)^T$ be the principal eigenvector of $Q_G.$ After solving the following eigenvalue-eigenvector equations,     
  \begin{align*}
        \rho(G)x_1&=2x_2+(n-6)x_3,\\
        \rho(G)x_2&=x_1+2x_2+(n-6)x_3,\\
        \rho(G)x_3&=2x_1+4x_2+(n-9)x_3,
    \end{align*}
we find that
     \begin{equation}\label{eigenvector for n-3 minus an edge}
        x_2>x_3>x_1.
    \end{equation}

Suppose $G' = (V' E')$ is a minimizer with $u,v\in V'$ of degree $n-2$. Consider the partition of $G'$ in to two parts : $\{u,v\}$ and $V'\setminus\{u,v\}$. 
If $u\not \sim v$, the  quotient matrix is
\begin{equation*}
Q_1 = \begin{bmatrix}
    0&n-2\\
    2&\frac{n(n-7)+6}{n-2}
\end{bmatrix}
\end{equation*}
and if $u\sim v$, the quotient matrix is
\begin{equation*}
Q_2 = \begin{bmatrix}
   1&n-3\\
   \frac{2(n-3)}{n-2}&\frac{n(n-7)+8}{n-2}
\end{bmatrix}.
\end{equation*} 
Their respective largest eigenvalues are
$$\rho(Q_1) = \frac{n^2-7n+6 +\sqrt{n^4-6n^3+13n^2+12n-28}}{2(n-2)}$$
and
$$\rho(Q_2) = \frac{n^2-6n+6 +\sqrt{n^4-8n^3+20n^2+8n-44}}{2(n-2)}.$$ We note that $\rho(Q_1)\leq \rho(Q_2)$ and plugging $\rho(Q_1)$ in $P(x)$ we check $P(\rho(Q_1))$ is monotonically decreasing in $n$ with the limit 0. Since two of the three roots of $Q_G$ are negative, we get $P(x)>0$ for all $x\geq \rho(Q_1)$. Hence we conclude $\rho(Q_1)>\rho(G),  \rho(Q_2)>\rho(G)$, and by eigenvalue interlacing (see Theorem \ref{thm:int_sr}), $\rho(G')>\rho(G).$ In view of this, together with Lemma \ref{exactly one n-2}, we get if $G'$ is a minimizer then $\Delta(G')\leq n-3.$
By handshaking lemma, we also get $\delta(G')\geq n-5.$ In particular, the only possibility is when $G'$ has $n-1$ vertices of degree $n-3$ and one vertex, say $v$ of degree $n-5$. We now show that such a graph cannot be a minimizer in $\mathcal{G}_{n,e}$. Consider the partition of $G'$ in to two parts: $\{v\}$ and $V'\setminus\{v\}.$ The corresponding quotient matrix and its
characteristic polynomial respectively are
$$Q = \begin{bmatrix}
    0&n-5\\
    \frac{n-5}{n-1}&\frac{n(n-5)+8}{n-1}
\end{bmatrix}$$ and
$$g(x) = x\left(x - \frac{n(n-5)+8}{n-1}\right) - \frac{(n-5)^2}{n-1}.$$ The largest eigenvalue of $Q$ is
$$\rho(Q) = \frac{n^2-5n+8 +\sqrt{n^4-6n^3-3n^2+60n-36}}{2(n-1)}.$$ We note that $\rho(Q)\geq \rho(Q_1)$. Therefore, $\rho(Q)>\rho(G)$ and by eigenvalue interlacing (see Theorem \ref{thm:int_sr}), $\rho(G')>\rho(G).$ This completes the proof.
\end{proof}
\begin{proposition}
      If $n\geq 9$ and $e = {n-1\choose 2}-2$, then a minimizer in $\mathcal{G}_{n,e}$ is a $(n-3)$-regular graph minus an edge.
\end{proposition}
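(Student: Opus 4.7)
The plan is to build directly on the previous proposition, which established $\Delta(G) - \delta(G) = 1$ together with the bounds $\Delta(G) \leq n-3$ and $\delta(G) \geq n-4$ for any minimizer $G \in \mathcal{G}_{n,e}$. Combined with the handshake identity $2e = n^2 - 3n - 2$, these force the degree sequence of any minimizer to consist of $n-2$ vertices of degree $n-3$ and exactly two vertices $u, v$ of degree $n-4$. It therefore suffices to show that $u$ and $v$ are non-adjacent in any minimizer, since adding the edge $uv$ would then produce an $(n-3)$-regular graph on $n$ vertices.

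Assume for contradiction that $u \sim v$ in a minimizer $G$. I will apply eigenvalue interlacing (Theorem \ref{thm:int_sr}) to the partition $\{u, v\} \sqcup (V \setminus \{u, v\})$. Each of $u, v$ has $n-5$ neighbors outside $\{u, v\}$, and the induced subgraph on $V \setminus \{u, v\}$ has $(n^2 - 7n + 16)/2$ edges, so the quotient matrix is
\[
Q = \begin{pmatrix} 1 & n - 5 \\ \dfrac{2(n-5)}{n-2} & \dfrac{n^2 - 7n + 16}{n-2} \end{pmatrix},
\]
and interlacing gives $\rho(G) \geq \rho(Q)$. The plan is to show $\rho(Q) > \rho(G^*)$, where $G^* = G_1 \vee G^3_{n-6}$ is the minimizer candidate from Proposition \ref{n-3 regular -1} whose spectral radius is the largest root of $P(x) = x^3 + (7-n)x^2 + 4(4-n)x + (6-2n)$; this would contradict the minimality of $G$.

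Using the characteristic equation of $Q$ to eliminate $\rho(Q)^2$ and $\rho(Q)^3$ reduces $P(\rho(Q))$ to a linear expression in $\rho(Q)$: a direct computation yields $(n-2)^2 P(\rho(Q)) = (-n^2 + 22n - 4)\rho(Q) + (n^3 - 25n^2 + 70n + 24)$. Writing $\rho(Q) = n - 3 - \beta$ and using the identity $(-n^2 + 22n - 4)(n-3) + (n^3 - 25n^2 + 70n + 24) = 36$ (verified by expansion), the desired positivity $P(\rho(Q)) > 0$ reduces to the inequality $(-n^2 + 22n - 4)\beta < 36$. For $n \geq 22$ the coefficient $-n^2 + 22n - 4$ is non-positive so the inequality is automatic; for $9 \leq n \leq 21$ it follows by bounding $\beta$ using its own characteristic equation $(n-2)\beta^2 - (n^2 - 4n - 2)\beta + (2n - 10) = 0$.

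The main obstacle is the tightness of the comparison: $\rho(Q)$ and $\rho(G^*)$ agree to order $n - 3 - \tfrac{2}{n} + \tfrac{2}{n^2}$ and differ only by $O(1/n^3)$ asymptotically, so naive estimates are insufficient. The fortunate cancellation $(-n^2 + 22n - 4)(n-3) + (n^3 - 25n^2 + 70n + 24) = 36$ is what makes the approach tractable, reducing the asymptotically delicate inequality $\rho(Q) > \rho(G^*)$ to a clean finite-range bound $(-n^2 + 22n - 4)\beta < 36$ for $9 \leq n \leq 21$ and a trivial observation for $n \geq 22$.
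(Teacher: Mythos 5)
Your proposal is correct and follows essentially the same route as the paper: after the previous proposition forces the degree sequence of $n-2$ vertices of degree $n-3$ and two of degree $n-4$, both arguments assume the two degree-$(n-4)$ vertices are adjacent, form the partition $\{u,v\}\cup(V\setminus\{u,v\})$ with the identical quotient matrix $Q$, and derive a contradiction from $\rho(G)\geq\rho(Q)>\rho(G_1\vee G^3_{n-6})$ via interlacing. The only divergence is in how the key inequality $P(\rho(Q))>0$ is certified: your reduction $(n-2)^2P(\rho(Q))=(-n^2+22n-4)\rho(Q)+(n^3-25n^2+70n+24)$ combined with the bound $n-3-\rho(Q)<2/n$ (which follows from the quadratic you write for $\beta$) is an explicit, checkable version of the step the paper handles by asserting that $P(\rho(Q))$ is monotonically decreasing in $n$ with limit $0$.
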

\begin{proof}
    Let $G' = (V',E')$ be a minimizer with $x,y\in V'$ of degree $n-4$ such that $x\sim y$ in $G'$ Consider the partition of $G'$ into two parts: $\{x,y\}$ and $V'\setminus\{x,y\}.$
    The corresponding quotient matrix and its characteristic polynomial respectively are 
    $$Q = \begin{bmatrix}
        1&n-5\\2\frac{n-5}{n-2}&\frac{n(n-7)+16}{n-2}
    \end{bmatrix}$$
    and
    $$g(x) = (x-1)\left(x-\frac{n(n-7)+16}{n-2}\right) - \frac{2(n-5)^2}{n-2}$$
    with $$\rho(Q) = \frac{n^2-6n+14+\sqrt{n^4-8n^3+4n^2+72n-76}}{2(n-2)}.$$ Consider the polynomial $f(x) = P(x)-xg(x).$ We note that $$f'(x) = \frac{2n(3x-n)+11n+2}{n-2}>0$$ in the interval $[\rho(Q), n-3]$ and so $f(x)$ is increasing in the interval. When $x = \rho(Q), f(\rho(Q)) = P(\rho(Q)),$ where $P(\rho(Q))$ is monotonically decreasing in $n$ with the limit 0. Therefore $f(x)>0$ in the interval $[\rho(Q), n-3].$ This proves $\rho(G)\leq \rho(Q)$ and by eigenvalue interlacing (see Theorem \ref{thm:int_sr}), $\rho(G)<\rho(G').$ This completes the proof.
\end{proof}

\begin{proof}[Proof of Proposition \ref{n-3 regular -1}]
Let $G = G_1\vee G^3_{n=6}$ be the graph as defined in the statement and let $G' = (V',E')$ be a minimizer in $\mathcal{G}_{n,e}$. We will show that $G'\cong G.$ Relabel the vertices of $G'$ such that the two vertices of degree $n-4$ coincides with $v_1, v_5$ of $G_1.$ Since $v_1\not\sim v_5$ in $G'$, we note that $|N(v_1)\cap N(v_5)|\geq n-6$. Relabel again the vertices of $G'$ other than $v_1$ and $v_5$ such that the first $n-6$ vertices of the set $N(v_1)\cap N(v_5)$ coincides with $V(G^3_{n-6})$ of $G.$ If in $G'$ vertices $V(G^3_{n-6})$ are not in join with $\{v_2, v_3, v_4, v_6\}$, then by Equation \ref{eigenvector for n-3 minus an edge} and Theorem \ref{local switching}, we get $\rho(G')>\rho(G),$ a contradiction. The proof that the subgraph induced by $\{v_1, v_2, v_3, v_4, v_5, v_6\}$ in $G'$ is isomorphic to $G_1$ is similar to the proof of Proposition \ref{min for e=n^2/4-1}, hence we omit that calculation. This completes the proof.
\end{proof}

\section{Sporadic cases}

\subsection{The case $e=\left(\frac{n-1}{2}\right)\left(\frac{n+1}{2}\right)$}
\begin{proposition}\label{similar degree}
 For fixed $n,e$, and $p\geq 2$, consider the set $S = \{(d_1, d_2,\cdots , d_n)\; :\; \sum_i d_i = 2e, d_i\in\mathbb{N}\}$. 
 The $p$-mean is minimum for a degree sequence in which all $d_i's$  are as equal as possible, i.e., $d_i = \floor{2e/n}$ or $d_i = \ceil{2e/n}.$   
\end{proposition}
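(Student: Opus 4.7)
The plan is to reduce the problem to minimizing $\sum_{i=1}^n d_i^p$ (since $x\mapsto x^{1/p}$ is a strictly increasing function on $[0,\infty)$, so it preserves the order of positive quantities) and then to apply a standard smoothing argument that exploits the strict convexity of $x\mapsto x^p$ for $p\geq 2$. The main work is concentrated in a single one-step inequality; the remainder is bookkeeping enforced by the integrality constraint on the $d_i$.

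The key ingredient I would establish first is the following: if $a,b$ are nonnegative integers with $a\geq b+2$ and $p\geq 2$, then $(a-1)^p+(b+1)^p<a^p+b^p$. To see this, set $g(t)=(a-t)^p+(b+t)^p$ for $t\in[0,1]$. Differentiating gives $g'(t)=p\bigl[(b+t)^{p-1}-(a-t)^{p-1}\bigr]$, and since $a\geq b+2$ implies $a-t\geq b+1\geq b+t$ throughout $[0,1]$, we have $g'(t)\leq 0$ with strict inequality on all of $[0,1)$. Integrating yields $g(1)<g(0)$, which is the claimed inequality.

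With this in hand, I would argue by contradiction. Suppose $(d_1,\ldots,d_n)\in S$ is a minimizer of the $p$-mean and some pair of entries satisfies $d_i\geq d_j+2$. Replacing $(d_i,d_j)$ by $(d_i-1,d_j+1)$ produces a new sequence still lying in $S$ (the sum $2e$ is preserved and the entries remain nonnegative integers), and the inequality above shows that $\sum_k d_k^p$, and hence the $p$-mean, strictly decreases — a contradiction. Therefore any minimizer satisfies $|d_i-d_j|\leq 1$ for every pair $i,j$, which forces every entry to lie in $\{\floor{2e/n},\ceil{2e/n}\}$.

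Finally, the constraint $\sum_i d_i=2e$ pins down the precise number of coordinates equal to $\ceil{2e/n}$ and to $\floor{2e/n}$, so any two balanced sequences in $S$ are permutations of each other and yield the same value of $\sum_i d_i^p$, hence the same $p$-mean. Thus the balanced sequences are exactly the minimizers. The only nontrivial step in this plan is the one-step convexity inequality, and as outlined above it reduces to an elementary calculus computation; everything else is formal.
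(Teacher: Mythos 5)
Your proposal is correct and follows essentially the same route as the paper: both reduce to the one-step smoothing inequality $(d_i-1)^p+(d_j+1)^p<d_i^p+d_j^p$ for $d_i\geq d_j+2$ and prove it by showing the one-variable function $t\mapsto (d_i-t)^p+(d_j+t)^p$ is decreasing on the relevant interval. Your version merely spells out the derivative computation and the trivial converse (all balanced sequences give equal $p$-means), which the paper leaves implicit.
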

\begin{proof}
It suffices to show that for all $i,j\in [n]$, $|d_i-d_j|<2$ in the degree sequence $d$ for which $d^{(p)}$ is minimum. Suppose for contradiction that $d$ is such a degree sequence with $d_i\geq d_j +2$ for some $i,j\in [n].$ Generate a new degree sequence $d'$ from $d$
such that all entries are same except $d'_i = d_i -1$ and $d'_j = d_j +1.$ We show that $(d_i-1)^p + (d_j+1)^p < d_i^p + d_j^p$ to get a contradiction.

Consider the following differentiable function $f(h) = (d_i - h)^p + (d_j +h)^p.$  We note that $f(h)$ is a strictly decreasing function for $h\in (0,\frac{d_i - d_j}{2})$ and so  $(d_i-1)^p + (d_j+1)^p < d_i^p + d_j^p$. This completes the proof.
\end{proof}
    This means for graphs on $n$ vertices and $e$ edges, if the minimizer $G_0$ has $char_{\rho}(G_0) = p$, then any graph $G\in\mathcal{G}_{n,e}$ with $Ir(G)\leq 1$ has $char_{\rho}(G)\geq p.$ 
\noindent The following result was proved in \cite{Stanic}.
\begin{proposition}\label{e=s(s+1)}
 For $(n,e) = (2k+1, k(k+1))$, $G =K_{k,k+1}$ is the minimizer of $\mathcal{G}_{n,e}$.
\end{proposition}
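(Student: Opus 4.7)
The plan is to chain three facts already established in the excerpt: the lower bound $\rho(G)\geq d^{(char_\rho(G))}(G)$ built into the definition of $char_\rho$, the strict monotonicity of $d^{(p)}$ in $p$ together with the bound $char_\rho(G)\geq 2$ for irregular $G$, and Proposition \ref{similar degree} which controls the $2$-mean of any degree sequence with prescribed total. Because the target graph $K_{k,k+1}$ is semiregular bipartite, Theorem \ref{semiregular} tells us that $char_\rho(K_{k,k+1})=2$, so the $2$-mean is exactly the right gauge to compare against.

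First I would note that $2e/n=2k(k+1)/(2k+1)$ is never an integer for $k\geq 1$, so every $G\in\mathcal{G}_{n,e}$ is irregular; Theorem \ref{charG} then gives $char_\rho(G)\geq 2$, and Lemma \ref{increasing p} yields $\rho(G)=d^{(char_\rho(G))}(G)\geq d^{(2)}(G)$. Next I would apply Proposition \ref{similar degree} with $p=2$: among degree sequences on $n=2k+1$ vertices summing to $2e=2k(k+1)$, the $2$-mean is minimized by the balanced sequence with $k+1$ entries equal to $k$ and $k$ entries equal to $k+1$. A short arithmetic check gives
\[
d^{(2)}(K_{k,k+1})=\sqrt{\tfrac{(k+1)k^{2}+k(k+1)^{2}}{2k+1}}=\sqrt{k(k+1)},
\]
which coincides with the well known spectral radius of $K_{k,k+1}$. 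Combining the two inequalities yields $\rho(G)\geq \sqrt{k(k+1)}=\rho(K_{k,k+1})$ for every $G\in\mathcal{G}_{n,e}$.

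For uniqueness I would trace equality through the chain. If $\rho(G)=\rho(K_{k,k+1})$, then $d^{(char_\rho(G))}(G)=d^{(2)}(G)$, and since $G$ is irregular the strict monotonicity in Lemma \ref{increasing p} forces $char_\rho(G)=2$; Theorem \ref{semiregular} then forces the connected graph $G$ to be semiregular bipartite. Moreover, equality in Proposition \ref{similar degree} forces the degree sequence of $G$ to be the balanced one, i.e.\ $k+1$ vertices of degree $k$ and $k$ vertices of degree $k+1$. A semiregular bipartite graph with this degree sequence must have parts of sizes $k+1$ and $k$ with each vertex of the larger part adjacent to all vertices of the smaller part, hence $G\cong K_{k,k+1}$.

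There is no real obstacle in this argument; the heavy lifting has already been done by Proposition \ref{similar degree} and the spectral-mean-characteristic framework of Hofmeister. The only points requiring care are verifying that $2e/n$ is non-integral so that Theorem \ref{charG} applies to every $G\in\mathcal{G}_{n,e}$, and being explicit about the strictness in Lemma \ref{increasing p} when extracting $char_\rho(G)=2$ from the equality case.
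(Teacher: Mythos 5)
Your proof is correct and follows essentially the same route as the paper's: compute that the $2$-mean of the degree sequence of $K_{k,k+1}$ equals its spectral radius $\sqrt{k(k+1)}$, invoke Theorem \ref{charG} together with Lemma \ref{increasing p} and Proposition \ref{similar degree} to bound $\rho(G)$ from below for every (necessarily irregular) $G\in\mathcal{G}_{n,e}$, and settle uniqueness via Theorem \ref{semiregular}. You are merely more explicit than the paper about the non-integrality of $2e/n$, the role of Proposition \ref{similar degree}, and the equality analysis, which the paper leaves implicit.
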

\begin{proof}
 We know that the spectral radius of $K_{k,k+1}$ is $ \sqrt{k(k+1)}$ and the 2-mean of its degree sequence
 \begin{align*}
  d^{(2)} &= \sqrt{\frac{1}{2k+1}(k(k+1)^2 +(k+1)k^2)}\\ &= \sqrt{\frac{(2k+1)k(k+1)}{2k+1}}\\
  &= \sqrt{k(k+1).}
 \end{align*}
 Therefore, its spectral mean characteristic equals 2 and by Theorem \ref{charG}, we get  $K_{k,k+1}$ is a minimizer. The uniqueness follows from Theorem \ref{semiregular}.  
\end{proof}

\subsection{The case $e = \frac{n^2}{4} -1$}

In this subsection, we will prove that for $n$ even and $e = \frac{n^2}{4}-1,$ $K_{\frac{n}{2},\frac{n}{2}}$ minus an edge is a minimizer in $\mathcal{G}_{n,e}$ and $$\rho_{min}(n,e) = \frac{n-2+\sqrt{n^2 +4n-12}}{4}.$$ 

\begin{remark}
This minimizer is not unique. For example, when $n=6$, there are 2 non-isomorphic minimizers, see Figure \ref{n6e8 minimizers}, and when $n=8,$ there are 5 non-isomorphic minimizers. See Figure \ref{n8e15minimizers}. We characterize all the minimizers in Proposition \ref{min for e=n^2/4-1}.
\end{remark}
\begin{figure}[ht!]
 \centering
    \includegraphics[scale =0.3]{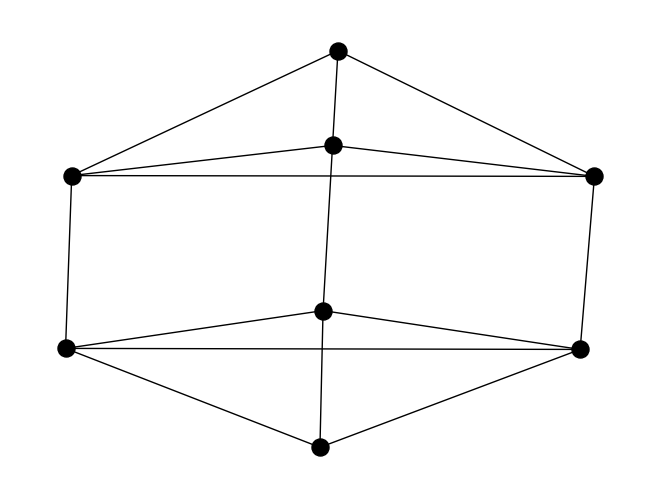}
    \includegraphics[scale =0.3]{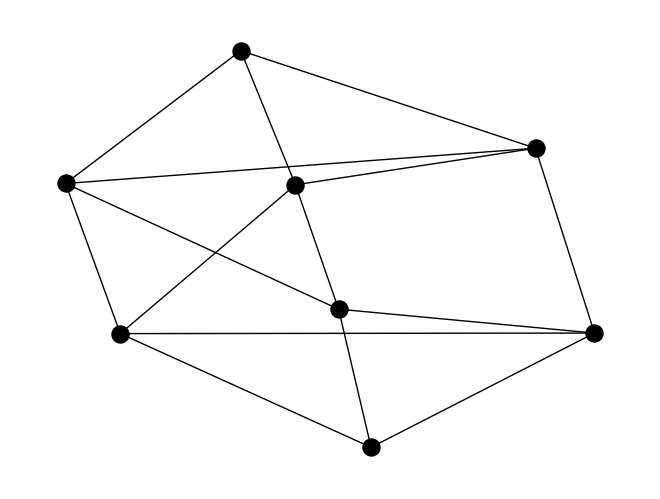}
    \includegraphics[scale =0.3]{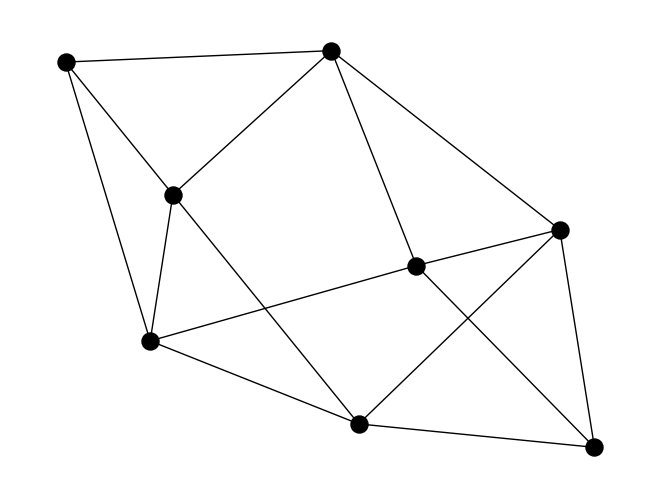}
    \includegraphics[scale =0.3]{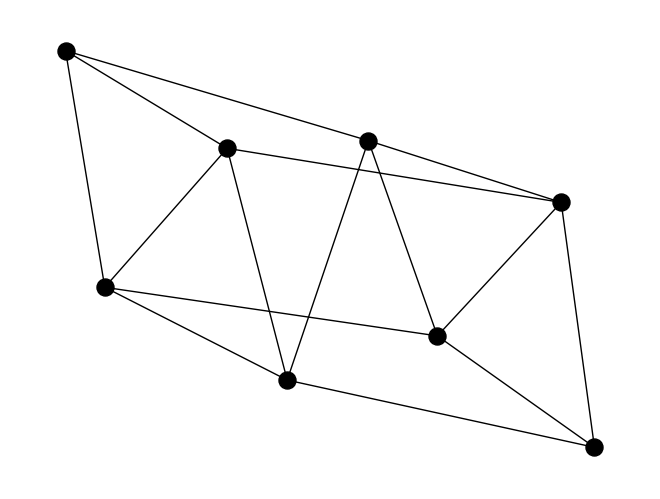}
    \includegraphics[scale =0.3]{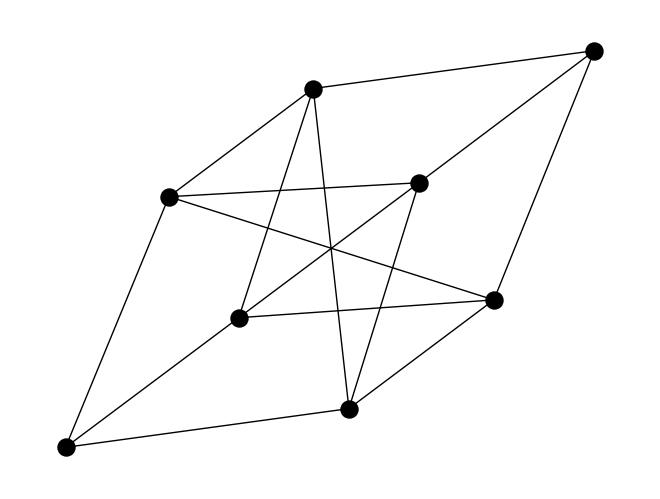}
    \caption{All $n=8,e=15$ non-isomorphic minimizers.}
    \label{n8e15minimizers}
\end{figure}

\begin{proposition}
    For $n$ even and $e = \frac{n^2}{4}-1,$ a minimizer $G$ satisfies $\Delta(G)-\delta(G) =1.$ 
\end{proposition}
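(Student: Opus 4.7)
The plan is to bound the spectral radius of any $G\in\mathcal{G}_{n,e}$ with $\Delta(G)-\delta(G)\ge 2$ from below by its $2$-mean $d^{(2)}(G)$, and to show that even the minimum of $d^{(2)}$ over all integer degree sequences on $n$ entries with sum $2e = n^2/2-2$ and range $\ge 2$ strictly exceeds $\rho_0 := \rho(K_{n/2,n/2}-e) = \tfrac{n-2+\sqrt{n^2+4n-12}}{4}$. Since $K_{n/2,n/2}-e \in \mathcal{G}_{n,e}$, this gives $\rho_{min}(n,e)\le\rho_0 < \rho(G)$, contradicting minimality.

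I would first confirm the formula for $\rho_0$ via the equitable partition of $K_{n/2,n/2}-e$ into the two endpoints of the removed edge and the remaining $n-2$ vertices; the quotient matrix $\begin{bmatrix}0 & n/2-1 \\ 1 & n/2-1\end{bmatrix}$ has $\rho_0$ as its largest eigenvalue by Theorem \ref{thm:eq_sr}. Next, since $2e/n = n/2-2/n \notin \mathbb{Z}$ for even $n\ge 4$, every $G\in\mathcal{G}_{n,e}$ is irregular, so $char_{\rho}(G)\ge 2$ by Theorem \ref{charG} and $\rho(G) = d^{(char_{\rho}(G))}(G) \ge d^{(2)}(G)$ by the monotonicity of $d^{(p)}$ in Lemma \ref{increasing p}.

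The combinatorial heart of the argument is a variant of Proposition \ref{similar degree}: over integer sequences of length $n$ with sum $n^2/2-2$ and range at least $2$, the minimum of $\sum d_i^2$ equals $n^3/4 - 2n + 4$. The convexity argument behind Proposition \ref{similar degree} (the replacement $(a,b)\mapsto(a-1,b+1)$ for $a-b\ge 2$ strictly decreases $\sum d_i^2$ by $2(a-b)-2$) shows the minimum is attained at a range-$2$ sequence, with care taken for the case of unique extremes and range $3$ where one must instead modify a pair of the form $(\max, \min+1)$ to land on a range-$2$ sequence. A short case analysis on the maximum degree $M$ of the range-$2$ optimum shows that only $M=n/2$ (the sequence with $n-1$ copies of $n/2$ and one copy of $n/2-2$) and $M=n/2+1$ (one copy of $n/2+1$, $n-4$ copies of $n/2$, and three copies of $n/2-1$) are compatible with the sum constraint; both give $\sum d_i^2 = n^3/4-2n+4$, so $d^{(2)}(G)\ge\sqrt{n^2/4-2+4/n}$.

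Finally, to verify $\sqrt{n^2/4-2+4/n} > \rho_0$, I would compute $\rho_0^2 = (2n^2-8+2(n-2)\sqrt{n^2+4n-12})/16$, clear denominators, and reduce (after a second squaring) to $3n^2-12n+16>0$, which holds for all real $n$ since its discriminant is $144-192=-48<0$. Combining, $\rho(G)\ge d^{(2)}(G)>\rho_0\ge\rho_{min}(n,e)$, so any minimizer has $\Delta-\delta\le 1$, and irregularity forces $\Delta-\delta=1$. The main obstacle will be the case analysis bounding $\sum d_i^2$ over bad integer sequences (in particular, handling range-$3$ sequences with unique extremes so that the iterative convexity argument does not leave the feasible region), rather than the algebraic verifications, which are direct.
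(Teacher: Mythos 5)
Your proposal is correct and follows essentially the same route as the paper: both arguments combine the lower bound $\rho(G)\geq d^{(2)}(G)$ for irregular graphs (Theorem \ref{charG} together with Lemma \ref{increasing p}) with a minimization of $\sum_i d_i^2$ over degree sequences violating $\Delta-\delta\leq 1$, and both reduce the comparison with the target value $\frac{n-2+\sqrt{n^2+4n-12}}{4}$ to the inequality $3n^2-12n+16>0$. The only difference is organizational: the paper conditions on a single vertex of degree $\frac{n}{2}+k$ (and separately on one of degree $\frac{n}{2}-2$) and applies Proposition \ref{similar degree} only to the remaining $n-1$ degrees, which sidesteps the range-three boundary issues you rightly flag in your global minimization over all integer sequences of range at least two.
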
 
\begin{proof}
  Suppose $G$ is a minimizer in $\mathcal{G}_{n,e}$ with a vertex $i$ of degree $d_i=\frac{n}{2} + k,$ $k\in\mathbb{N}$. By Proposition \ref{similar degree}, the sum $\sum_{j\not =i, j=1}^n d_j^2$ is minimum when all $d_j \in \{\lfloor \frac{n}{2} -\frac{2+2k}{n-1}\rfloor = \frac{n}{2}-1, \lceil  \frac{n}{2} -\frac{2+2k}{n-1} \rceil = \frac{n}{2}\}$. Therefore, by Lemma \ref{increasing p}, we have
  $$\rho(G)\geq \sqrt{\frac{1}{n}\left( (n-3-k)\frac{n^2}{4} + (2+k)(\frac{n}{2}-1)^2 + (\frac{n}{2}+k)^2 \right)}.$$ We now show for $k\in \mathbb{N},$
  \begin{align*}
    &\sqrt{\frac{1}{n}\left( (n-3-k)\frac{n^2}{4} + (2+k)(\frac{n}{2}-1)^2 + (\frac{n}{2}+k)^2 \right)}>   \frac{n-2+\sqrt{n^2 +4n-12}}{4}\\
    &\iff (n-3-k)\frac{n^2}{4} + (2+k)(\frac{n}{2}-1)^2 + (\frac{n}{2}+k)^2> n\left( \frac{n-2+\sqrt{n^2 +4n-12}}{4}\right)^2\\
    &\iff \frac{n^3}{4} -2n +k^2 +k +2> \frac{n^3}{8} -\frac{n}{2} + \frac{n(n-2)\sqrt{n^2+4n-12}}{8}.
  \end{align*}
 Observing the function $f(k)  = \frac{n^3}{4} -2n +k^2 +k +2$ is increasing in $k$, it suffices to show $f(1) > \frac{n^3}{8} -\frac{n}{2} + \frac{n(n-2)\sqrt{n^2+4n-12}}{8}$ to prove the above inequality. We note that
 \begin{align*}
     &\hspace{1.3cm} f(1) = \frac{n^3}{4} -2n + 4> \frac{n^3}{8} -\frac{n}{2} + \frac{n(n-2)\sqrt{n^2+4n-12}}{8}\\
     &\iff \left(\frac{n^3}{8} -\frac{3n}{2} + 4\right)^2 - \left( \frac{n(n-2)\sqrt{n^2+4n-12}}{8}\right)^2>0\\
     &\iff 3n^2-12n+16>0
 \end{align*}
 which is true for all $n\geq 2.$ Therefore, $\Delta(G)\leq \frac{n}{2}.$ In view of this, the only remaining case to rule out is when one vertex has degree $\frac{n}{2}-2$ and remaining vertices have degree $\frac{n}{2}.$ Similar to above, we note that
 \begin{align*}
  &\hspace{1.2cm}\sqrt{\frac{1}{n}\left( (\frac{n}{2}-2)^2 +(n-1)(\frac{n}{2})^2\right)}> \frac{n-2+\sqrt{n^2 +4n-12}}{4}&\\
  &\iff (\frac{n}{2}-2)^2 +(n-1)(\frac{n}{2})^2>  n\left( \frac{n-2+\sqrt{n^2 +4n-12}}{4}\right)^2&\\
   &\iff\frac{n^3}{4} -2n + 4> \frac{n^3}{8} -\frac{n}{2} + \frac{n(n-2)\sqrt{n^2+4n-12}}{8}&\\
     &\iff \left(\frac{n^3}{8} -\frac{3n}{2} + 4\right)^2 - \left( \frac{n(n-2)\sqrt{n^2+4n-12}}{8}\right)^2>0 &\\
     &\iff 3n^2-12n+16>0
 \end{align*}
  which is true for all $n\geq 2.$ Therefore, $\delta(G)\geq \frac{n}{2}-1.$ Since $n\not | \; 2e$, $\Delta(G)-\delta(G) =1$.
\end{proof}

\begin{proposition}
For even $n$ and $e = \frac{n^2}{4}-1,$ if $G$ is minimizer in $\mathcal{G}_{n,e}$, then $G$ is obtained from an $\frac{n}{2}$-regular graph on $n$ vertices by deleting an edge. 
\end{proposition}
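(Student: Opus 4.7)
The plan is to combine the immediately preceding proposition with a handshake count to pin down the degree sequence of a minimizer, then eliminate the one non-trivial configuration via eigenvalue interlacing against an explicit reference graph.

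First, since the previous proposition gives $\Delta(G)-\delta(G)=1$ with $\Delta(G)=n/2$ and $\delta(G)=n/2-1$, the identity $\sum_v d(v)=2e=n^2/2-2$ forces $G$ to have exactly $n-2$ vertices of degree $n/2$ and exactly two vertices $u,v$ of degree $n/2-1$. If $u\not\sim v$ in $G$, then adding the edge $uv$ produces an $n/2$-regular graph on $n$ vertices, so $G$ has the desired form. The task thus reduces to ruling out $u\sim v$.

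Assume for contradiction $u\sim v$. Partitioning $V(G)$ as $\{u,v\}\cup(V\setminus\{u,v\})$ and carrying out a routine edge count yields an explicit $2\times 2$ quotient matrix $Q_1$ with rational entries in $n$, and Theorem \ref{thm:int_sr} gives the lower bound $\rho(G)\geq\rho(Q_1)$. Independently, I would exhibit $K_{n/2,n/2}$ minus an edge as a member of $\mathcal{G}_{n,e}$ (it is connected for $n\geq 4$) and, using the equitable partition into the two endpoints of the missing edge versus the rest, compute its spectral radius to be exactly $\alpha=\frac{n-2+\sqrt{n^2+4n-12}}{4}$. This yields $\rho_{min}(n,e)\leq\alpha$.

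The crux is then the strict inequality $\rho(Q_1)>\alpha$, which immediately gives $\rho(G)>\rho_{min}(n,e)$, contradicting minimality. Exploiting the quadratic relation $\alpha^2=\frac{n-2}{2}\alpha+\frac{n-2}{2}$ satisfied by $\alpha$, I would substitute to collapse the characteristic polynomial value $P_{Q_1}(\alpha)$ to a simple affine expression in $\alpha$, of the form $-\frac{n\alpha}{n-2}+\frac{n+2}{2}$; the inequality $P_{Q_1}(\alpha)<0$ is then equivalent to $\alpha>\frac{n}{2}-\frac{2}{n}$. Plugging in the closed form of $\alpha$ and squaring (both sides positive for $n\geq 4$) should reduce this to an elementary inequality such as $32n>64$. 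This square-root comparison is the one step I expect to require care; everything else is bookkeeping of degrees, edge counts, and $2\times 2$ partition matrices. Once $\rho(Q_1)>\alpha$ is in hand, the conclusion $u\not\sim v$, and hence the claimed structure of $G$, follow at once.
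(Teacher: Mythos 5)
Your proposal is correct and follows essentially the same route as the paper: the handshake count pins down exactly two vertices of degree $\frac{n}{2}-1$, and the adjacent case is eliminated by interlacing with the $\{u,v\}$ versus rest quotient matrix compared against $K_{n/2,n/2}$ minus an edge, whose spectral radius is $\alpha=\frac{n-2+\sqrt{n^2+4n-12}}{4}$. Your reduction of $P_{Q_1}(\alpha)<0$ to $\alpha>\frac{n}{2}-\frac{2}{n}$ and then to $32(n-2)>0$ is exactly the computation the paper performs (organized there as evaluating the difference of the two characteristic polynomials at $\alpha$).
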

\begin{proof}
Let $G$ be a minimizer and let $t$ be the number of vertices in $G$ of degree $\frac{n}{2}-1$. We have $ \frac{n^2}{2}-2 = 2e =\sum_{v\in V}d(v) = t(\frac{n}{2}-1) + (n-t)\frac{n}{2} = -t + \frac{n^2}{2}.$ Therefore, $t=2$ and let $u,v\in V$ be the two vertices of degree $\frac{n}{2}-1.$ Suppose $u\sim v$ in $G.$ Consider the partition of $G$ into the following two sets: $\{u,v\}, V\setminus \{u,v\}.$ The quotient matrix of this partition is 
$$Q_G =\begin{bmatrix}
    1&\frac{n}{2}-2\\
    \frac{n-4}{n-2}&\frac{n^2-4n+8}{2(n-2)}
\end{bmatrix},$$ and the corresponding characteristic polynomial is
\begin{equation}
 Q(x) = (x-1)(x-\frac{n^2-4n+8}{2(n-2)}-\frac{n-4}{n-2}(\frac{n}{2}-2). 
\end{equation}
Let $H = K_{\frac{n}{2},\frac{n}{2}} - uv$, where $uv$ is an edge in $K_{\frac{n}{2},\frac{n}{2}}$. Consider the same partition of vertex set of $H$ as above of $G$. The quotient matrix of this partition is 
$$Q_H =\begin{bmatrix}
    0&\frac{n}{2}-1\\
    1&\frac{n^2-4n+4}{2(n-2)}
\end{bmatrix},$$
and the corresponding characteristic polynomial is
\begin{equation}
 P(x) = x^2 -x\left(\frac{n^2-4n+4}{2(n-2)}\right)-\frac{n}{2}+1.   
\end{equation}
\begin{align*}
P(x)-Q(x)&= x\left(1 + \frac{n^2-4n+8}{2(n-2)} - \frac{n^2-4n+4}{2(n-2)}\right)-\frac{n^2-4n+8}{2(n-2)}-3\frac{n}{n-2} +1 +\frac{8}{n-2}\\ 
 &= \frac{2xn -n^2 +4}{2(n-2)}
\end{align*}
When $x = \rho(H) = \frac{n-2 +\sqrt{n^2+4n-12}}{4}$,
    \begin{align*}
      (P(x)-Q(x))& = \frac{(n-2 +\sqrt{n^2+4n-12})n-2n^2+8}{4(n-2)}\\
      -Q(x)&=\frac{-n^2-2n+8+n\sqrt{n^2+4n-12}}{4(n-2)}> 0.
    \end{align*}
    The last inequality follows from $n^2(n^2+4n-12)- (n^2+2n-8)^2 = n-2> 0$ for $n> 2.$ Therefore, the largest root of $Q(x)$ is greater than $\rho(H)$, and by eigenvalue interlacing (see Theorem \ref{thm:int_sr}), $\rho(G)>\rho(H).$ This completes the proof.
\end{proof}
\noindent For $n$ even take a  $(\frac{n}{2}-1)$-regular graph $H$ on $n-2$ vertices. Divide the vertex set of $H$ into two equal parts and call the induced subgraphs $G_1$ and $G_2$. Construct a new graph $G$ by connecting a new vertex $u$ with all the vertices of $G_1$ and connecting another new vertex $v$ with all the vertices of $G_2$. The graph $G$ has order $n$ and size $\frac{n^2}{4}-1.$ 
\begin{proposition}\label{min for e=n^2/4-1}
 For even $n$ and $e = \frac{n^2}{4}-1$,  the graph $G$ as described above is a minimizer in $\mathcal{G}_{n,e}$ and $$\rho_{min}(n,e) = \frac{n-2+\sqrt{n^2 +4n-12}}{4}.$$ 
\end{proposition}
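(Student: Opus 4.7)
The plan is to exploit the structural reduction already established in the three preceding propositions: any minimizer $G^*$ in $\mathcal{G}_{n,e}$ is obtained from an $n/2$-regular graph on $n$ vertices by deleting a single edge $u^*v^*$, so $G^*$ has exactly two non-adjacent vertices $u^*, v^*$ of degree $n/2-1$ and all other $n-2$ vertices of degree $n/2$. The constructed graph $G$ lies in this class, so it suffices to show $\rho(G) \le \rho(G^*)$ for every such $G^*$, with equality pinning down the prescribed structure.

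First I would compute $\rho(G)$ via the equitable partition $\pi$ of $V(G)$ into $\{u,v\}$ and $V(G)\setminus\{u,v\}$. By the construction, $N(u) = V(G_1)$ and $N(v) = V(G_2)$ partition $V(G)\setminus\{u,v\}$, so every vertex outside $\{u,v\}$ has exactly one neighbor in $\{u,v\}$ and $n/2-1$ neighbors in the complement. Thus $\pi$ is equitable with quotient matrix
\begin{equation*}
Q = \begin{bmatrix} 0 & (n-2)/2 \\ 1 & (n-2)/2 \end{bmatrix},
\end{equation*}
and Theorem \ref{thm:eq_sr} yields $\rho(G) = \rho(Q) = \frac{n-2+\sqrt{n^2+4n-12}}{4}$.

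The comparison step is a Rayleigh-quotient argument in the spirit of the proof of Theorem \ref{n-2 to n-3}. Let $y$ be the principal eigenvector of $G$; by the equitable partition it takes constant values $y_1$ on $\{u,v\}$ and $y_2$ on $V(G)\setminus\{u,v\}$. For any candidate $G^*$, define $y^* \in \mathbb{R}^{V(G^*)}$ by assigning $y_1$ to the two low-degree vertices $u^*, v^*$ and $y_2$ to the rest; then $\|y^*\|_2 = \|y\|_2 = 1$. The crucial observation is that in every candidate $G^*$ the number of edges between $\{u^*,v^*\}$ and its complement is $n-2$, and the number of edges inside the complement is $e - (n-2) = (n-2)^2/4$; both counts depend only on the degree sequence and not on the internal structure of $G^*$. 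Consequently
\begin{equation*}
y^{*T}A(G^*)y^* = 2\left[(n-2)\, y_1 y_2 + \frac{(n-2)^2}{4}\, y_2^2\right] = y^T A(G) y = \rho(G),
\end{equation*}
and the Rayleigh principle gives $\rho(G^*) \ge \rho(G)$.

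To characterize the minimizers, equality $\rho(G^*)=\rho(G)$ forces $y^*$ to be an eigenvector of $A(G^*)$, and being positive it must be the Perron eigenvector. Evaluating $A(G^*)y^* = \rho(G)\, y^*$ at any $w \in V(G^*)\setminus\{u^*,v^*\}$ shows that $|N(w)\cap\{u^*,v^*\}|$ is the same constant for every such $w$, and a cross-edge count pins this constant at $1$. Hence $N(u^*)$ and $N(v^*)$ partition $V(G^*)\setminus\{u^*,v^*\}$ into two equal parts, the induced subgraph on $V(G^*)\setminus\{u^*,v^*\}$ is $(n/2-1)$-regular, and $G^*$ is of the form constructed. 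The only real subtlety is the observation that the Rayleigh quotient of the uniform lift $y^*$ is determined by the degree sequence alone, which is what lets us avoid any ad hoc edge-swap or rotation argument; the equality analysis is then what forces the full combinatorial structure.
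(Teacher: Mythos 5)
Your proposal is correct and follows essentially the same route as the paper: both rely on the preceding propositions to reduce to candidates that are $\tfrac{n}{2}$-regular minus an edge, compute $\rho(G)$ from the equitable partition into $\{u,v\}$ and the rest, and then lift the quotient eigenvector to get a Rayleigh-quotient lower bound $\rho(G^*)\ge \rho(G)$ whose value depends only on the degree sequence. If anything, your equality analysis (using the eigenvector equation to force each remaining vertex to have exactly one neighbour in $\{u^*,v^*\}$) is spelled out more explicitly than the paper's brief assertion that non-isomorphic candidates give strict inequality.
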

\begin{proof}
Consider the partition of $G$ into two parts : $\{u,v\}, V(H).$ The corresponding quotient matrix is
\begin{equation*}
Q_G =\begin{bmatrix}
        0&\frac{n}{2}-1\\
        1&\frac{n}{2}-1
    \end{bmatrix}.
\end{equation*}
The largest eigenvalue $\rho(Q_G)$ of $Q_G$ is $\frac{n-2+\sqrt{n^2 +4n-12}}{4}$. Because this partition is equitable, Theorem \ref{thm:eq_sr} implies that $\rho(G)=\rho(Q_G).$ Let $x=(x_1, x_2)^T$ be the principal eigenvector of $Q_G.$ Then $x = (x_1, x_1, x_2, x_2, \ldots, x_2)^T$ is an eigenvector of $A(G)$ corresponding to $\rho(G)$. Let $X = \frac{X}{||X||_2}$. Assume that $G'=(V',E')$ is a minimizer in $\mathcal{G}_{n,e}$. Let $u,v\in V'$ with $d(u)= d(v) =\frac{n}{2}-1.$ It suffices to show $N(u)\cap N(v)=\emptyset$. 

Assume that $|N(u)\cap N(v)| = k>0$. Relabel the vertices of $G'$ such that $V' = \{u,v, N(u), N(v)\setminus N(u), \tilde{V}\}$, where $\tilde{V} = V'\setminus (\{u,v\}\cup N(u)\cup N(v))$, in this order. Suppose $N(u)\cap N(v) = \{y_1, y_2, ..., y_k\}$ and $\tilde{V} = \{z_1, z_2, ..., z_k\}$. We note that $$\rho(G')\geq X^TA(G')X = X^TA(G)X + 2x_v\sum_{i=1}^k(x_{y_i}-x_{z_i})= \rho(G)+2x_1(0) = \rho(G).$$ Since $G'$ and $G$ are not isomorphic, the inequality above is strict. This completes the proof.
\end{proof}

\begin{remark}
We note that $K_{\frac{n}{2},\frac{n}{2}}+$ edge is not necessarily a minimizer for $e=\frac{n^2}{4}+1$. For instance, when $n=8$ and $e = 17$, $K_{4,4} +$ edge is not a minimizer as its index is 4.293, whereas the $\rho_{min}(8,17) \approx 4.281$ is attained by the graph in Figure \ref{n8e17}.
\end{remark}

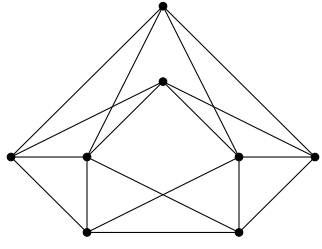
\begin{figure}[ht!]
    \centering
\begin{tikzpicture}
\foreach \x in {(0,0),(2,0),(0,1),(1,2),(2,1),(3,1),(-1,1),(1,3)}{
 \draw[fill] \x circle[radius = 0.05cm]; }
 \draw (1,2)--(0,1)--(0,0)--(2,0)--(2,1)--(3,1)--(1,2);
 \draw (0,1)--(-1,1)--(1,2)--(2,1)--(0,0);
 \draw(-1,1)--(1,3)--(3,1)--(2,0)--(0,1);
 \draw(0,0)--(-1,1);
 \draw (0,1)--(1,3)--(2,1);
\end{tikzpicture}
    \caption{The minimizer for $n=8, e=17.$}
    \label{n8e17}
\end{figure}

\subsection{The case $e = \frac{n^2}{3} -1$}
For $n$ divisible by 3, take a  $(\frac{n}{3}-1)$-regular graph $H$ of order $\frac{2n}{3}-2$. Divide the vertex set of $H$ into two equal parts and call the induced subgraphs $G_2, G_3$. Construct a new graph $\hat{G}$ by connecting a new vertex $x$ with all the vertices of $G_2$ and connecting another new vertex $y$ with all the vertices of $G_3$. Consider the graph $G = G_1\vee \hat{G}$, where $G_1 = \frac{n}{3}K_1$. The graph $G$ has order $n$ and size $\frac{n^2}{3}-1.$ 
\begin{proposition}\label{minimizer for e=n^2/3-1}
    For $n$ divisible by 3, $e=\frac{n^2}{3}-1$, graph $G$ as described above is the minimizer with
    $\rho_{min}(n,e)$ equals the largest root of the polynomial $$P(x) = x^3 +x^2\left(1-\frac{n}{3}\right) + x\left(1-\frac{2n^2}{9}- \frac{n}{3}\right) -\frac{2n^2}{9} +\frac{2n}{3}.$$ 
\end{proposition}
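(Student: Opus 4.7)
The plan is to mirror the three-stage structure used in Propositions \ref{n-3 regular -1} and \ref{min for e=n^2/4-1}. For the first stage, consider the partition of $V(G)$ into three parts: $V(G_1)$ of size $n/3$, $V(H)$ of size $2n/3 - 2$, and $\{x, y\}$. The construction makes this partition equitable: every vertex of $V(G_1)$ has $2n/3 - 2$ neighbors in $V(H)$ and $2$ in $\{x,y\}$; every vertex of $V(H)$ has $n/3$ neighbors in $V(G_1)$, $n/3 - 1$ in $V(H)$, and exactly $1$ in $\{x,y\}$; and each of $x, y$ has $n/3$ neighbors in $V(G_1)$ and $n/3 - 1$ in $V(H)$. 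Its quotient matrix
\[
Q_G = \begin{bmatrix} 0 & \tfrac{2n}{3}-2 & 2 \\ \tfrac{n}{3} & \tfrac{n}{3}-1 & 1 \\ \tfrac{n}{3} & \tfrac{n}{3}-1 & 0 \end{bmatrix}
\]
has characteristic polynomial exactly $P(x)$, so Theorem \ref{thm:eq_sr} gives $\rho(G) = \rho(Q_G)$.

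For the second stage, let $G' \in \mathcal{G}_{n,e}$ be a minimizer. Since $2e = 2n^2/3 - 2 \equiv -2 \pmod n$, Proposition \ref{reg} forbids a regular minimizer, so $G'$ is irregular and Theorem \ref{charG} yields $\rho(G') \geq d^{(2)}(G')$. Proposition \ref{similar degree} says $d^{(2)}$ over all sequences summing to $2e$ is minimized by the almost-regular sequence with two vertices of degree $2n/3 - 1$ and $n - 2$ of degree $2n/3$. Substituting the $d^{(2)}$-value corresponding to a single deviation from almost-regular (one vertex of degree $2n/3 + 1$ or $2n/3 - 2$) into $P$ and checking $P > 0$ there shows any such deviation forces $d^{(2)}(G') > \rho(G)$, contradicting minimality. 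Hence the degree sequence of a minimizer is exactly the almost-regular one.

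For the third stage, let $(y_1, y_2, y_3)^T$ be the principal eigenvector of $Q_G$ and $\rho = \rho(G)$. Subtracting the third eigenvalue-eigenvector equation from the second gives $\rho(y_2 - y_3) = y_3$, so $y_2 > y_3$; using the cubic relation one further deduces the ordering $y_2 > y_1 > y_3$. Extend to the unit eigenvector $Y$ of $A(G)$ constant on each part. Let $u, v$ be the two low-degree vertices in $G'$; inclusion-exclusion on $|N(u)| + |N(v)| = 4n/3 - 2 \geq |V \setminus \{u,v\}| = n-2$ yields $|N(u) \cap N(v)| \geq n/3$. Edge-rotation arguments (Theorem \ref{rotation}) and Kelmans transformations (Lemma \ref{Kelmans1}) force $u \not\sim v$ and $|N(u) \cap N(v)| = n/3$. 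Relabel $V(G')$ so that $N(u) \cap N(v)$, $V \setminus (\{u,v\} \cup (N(u) \cap N(v)))$, and $\{u, v\}$ correspond to the three parts of $G$, and denote by $a_{11}$ the number of edges induced within $N(u) \cap N(v)$. The degree constraints yield $a_{22} = a_{11} + (n/3-1)^2$ for the number of edges within the second part, and a short computation collapses the edge comparison to
\[
Y^T A(G') Y - Y^T A(G) Y = 2\, a_{11} (y_1 - y_2)^2 \geq 0.
\]
The Rayleigh quotient $\rho(G') \geq Y^T A(G') Y$ then gives $\rho(G') \geq \rho(G)$, with equality iff $a_{11} = 0$ and $Y$ is the principal eigenvector of $A(G')$. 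The latter forces the three-part partition to be equitable in $G'$ with quotient matrix $Q_G$, pinning $G'$ down to the family $G_1 \vee \hat{G}$ from the construction.

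The principal obstacle is the second stage: the gap between $\rho(G)^2$ and $d^{(2)}$ of the next-most-balanced sequence is only $\Theta(1/n)$, so one must bound the largest root of $P$ with sufficient precision to rule out every non-almost-regular sequence. A secondary obstacle appears in the third stage: the degree-counting alone gives only $|N(u) \cap N(v)| \geq n/3$, and showing equality (rather than some larger value) requires a careful case analysis with Kelmans-type switchings analogous to Proposition \ref{n-3 regular -1}, in contrast with Proposition \ref{min for e=n^2/4-1} where the analogous common neighborhood was empty.
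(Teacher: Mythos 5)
Your three-stage skeleton matches the paper's, and Stage 1 (the equitable partition $V(G_1)\cup V(H)\cup\{x,y\}$ with quotient matrix $Q_G$ and characteristic polynomial $P$) is exactly the paper's opening move. Your Stage 3 Rayleigh computation is also sound: with $S\subseteq N(u)\cap N(v)$ of size $n/3$, $u\not\sim v$, and the test vector $Y$ constant on parts, the edge bookkeeping does collapse to $Y^TA(G')Y-Y^TA(G)Y=2a_{11}(y_1-y_2)^2$, and since $y_1\neq y_2$ this forces $S$ independent; this is the same technique the paper uses in Theorem \ref{n-2 to n-3} and Proposition \ref{min for e=n^2/4-1}, whereas the paper's own proof here invokes Theorem \ref{local switching} together with the ordering \eqref{eigenvector entries} --- the two are interchangeable. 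Your Stage 2 is genuinely different: the paper bounds degrees by interlacing against two-part quotient matrices (vertex versus rest, pair versus rest), while you use Theorem \ref{charG} and Proposition \ref{similar degree} to compare $\rho(G)$ with the $2$-mean of the next-most-balanced degree sequence. That route does work --- one checks $P\bigl(\sqrt{4n^2/9-8/3+4/n}\bigr)>0$ --- but the margin $D-\rho(G)$ is of order $n^{-3}$, so the required algebraic verification is at least as delicate as the paper's; you correctly flag this. (A small logical point: Proposition \ref{similar degree} as stated gives only the global minimizer of $d^{(2)}$; you need the easy extension that the minimum over sequences with $\Delta-\delta\geq 2$ is attained at a single-unit deviation.)

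The genuine gap is the step ``$u\not\sim v$.'' The $2$-mean argument cannot see adjacency --- the two configurations (low-degree pair adjacent or not) have identical degree sequences --- and the tools you cite do not close this case: Lemma \ref{Kelmans1} requires a degree difference $l\geq 2$ between the two vertices, which fails here since $d(u)=d(v)=2n/3-1$, and Theorem \ref{rotation} needs an eigenvector comparison you have not set up. Moreover your Stage 3 computation genuinely breaks if $u\sim v$: then $|N(u)\cap N(v)\setminus\{u,v\}|$ is only guaranteed to be at least $n/3-2$, the part sizes and the cross-edge counts $a_{13},a_{23},a_{33}$ all change, and the difference of Rayleigh quotients no longer reduces to a nonnegative square. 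The paper closes this case in Proposition \ref{e=n^2/3-1} by interlacing: partition $V'$ into $\{u,v\}$ and the rest, compute the $2\times 2$ quotient matrix for the adjacent case, and verify $P(\rho(Q))>0$ so that $\rho(G')\geq\rho(Q)>\rho(G)$. You need this (or an equivalent) argument inserted before your Stage 3 can begin.
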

We will first prove the following.
\begin{proposition}\label{e=n^2/3-1}
    For $n$ divisible by 3, $e=\frac{n^2}{3}-1$, a minimizer is a $(\frac{2n}{3})$-regular graph minus an edge.  
\end{proposition}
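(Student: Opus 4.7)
The plan is to mirror the argument used for the case $e=\frac{n^2}{4}-1$ earlier in this subsection. Since $\frac{2e}{n}=\frac{2n}{3}-\frac{2}{n}$ is not an integer, no minimizer in $\mathcal{G}_{n,e}$ is regular, so if I can establish $\Delta(G)-\delta(G)\leq 1$ together with the non-adjacency of the two resulting low-degree vertices, the handshaking lemma forces $G$ to be a $\frac{2n}{3}$-regular graph minus an edge.

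First I would rule out the existence of a vertex of degree $\frac{2n}{3}+k$ for $k\geq 1$. Given such a vertex in a minimizer $G$, Proposition~\ref{similar degree} applied to the remaining $n-1$ degrees (whose sum is forced) shows that the $2$-mean $d^{(2)}$ of the full degree sequence is bounded below by the value obtained when the other $n-1$ entries are as equal as possible, namely lying in $\{\frac{2n}{3}-1,\frac{2n}{3}\}$ with exactly $k+2$ equal to $\frac{2n}{3}-1$. Combining Theorem~\ref{charG} and Lemma~\ref{increasing p} yields $\rho(G)\geq d^{(2)}$; I would then verify by direct calculation that this explicit $d^{(2)}$ is strictly greater than the largest root $\rho_0$ of $P(x)$, equivalently $P(d^{(2)})<0$ (the two remaining roots of $P$ lie far below the average degree, so any $y$ above the average degree with $P(y)<0$ satisfies $y<\rho_0$). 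Since the lower bound is monotone increasing in $k$, it suffices to treat $k=1$. A symmetric $2$-mean argument, applied to a putative vertex of degree $\frac{2n}{3}-j$ with $j\geq 2$, rules out $\delta(G)\leq \frac{2n}{3}-2$. Together these force $\Delta(G)-\delta(G)=1$ and then the handshaking lemma pins down the degree sequence as exactly two vertices of degree $\frac{2n}{3}-1$ and $n-2$ vertices of degree $\frac{2n}{3}$.

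With the degree sequence fixed, suppose for contradiction that the two vertices $u,v$ of degree $\frac{2n}{3}-1$ are adjacent. Form the partition $V(G)=\{u,v\}\cup(V\setminus\{u,v\})$ and let $Q$ be the corresponding quotient matrix; eigenvalue interlacing (Theorem~\ref{thm:int_sr}) yields $\rho(G)\geq \rho(Q)$. A computation of $Q$ gives all four entries as explicit functions of $n$, and I would show $\rho(Q)>\rho_0$ by substituting $\rho(Q)$ into $P$ and verifying $P(\rho(Q))<0$, exactly as in the analogous step for $e=\frac{n^2}{4}-1$; this contradicts the minimality of $G$ and completes the proof.

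The main obstacle is the algebraic comparison with the largest root of the cubic $P(x)$, which, unlike the quadratic surd appearing in the $e=\frac{n^2}{4}-1$ case, has no clean closed form. In practice I would evaluate $P$ at each candidate lower bound and reduce the desired inequality to a polynomial inequality in $n$; verifying this inequality for all $n$ divisible by $3$ above a small threshold (with a direct small-case check) is the technical heart of the proof, and the cubic nature of $P$ means some care is needed to ensure that the sign of $P$ at the substituted value correctly identifies the position relative to the largest root rather than one of the smaller ones.
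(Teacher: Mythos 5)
Your proposal is correct in outline but genuinely diverges from the paper at the degree-bounding step. The paper never uses the Hofmeister $2$-mean bound here: it bounds degrees by taking one- and two-vertex partitions, comparing the largest eigenvalue of each $2\times 2$ quotient matrix against the largest root $\rho_0$ of $P(x)$ via interlacing. That route only yields the weaker bounds $\Delta(G')\leq \frac{2n}{3}+1$ and $\delta(G')\geq \frac{2n}{3}-1$ at first, so the paper must then separately eliminate three residual configurations (two vertices of degree $\frac{2n}{3}+1$, whether adjacent or not; an adjacent pair of degree $\frac{2n}{3}-1$; and the degree sequence with one vertex of degree $\frac{2n}{3}+1$ and three pairwise non-adjacent vertices of degree $\frac{2n}{3}-1$), each by another quotient-matrix computation. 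Your $2$-mean argument, imported from the paper's treatment of $e=\frac{n^2}{4}-1$, would force $\Delta\leq\frac{2n}{3}$ and $\delta\geq\frac{2n}{3}-1$ outright and thereby skip all of that case analysis; the final step (ruling out adjacency of the two degree-$(\frac{2n}{3}-1)$ vertices by interlacing against $P$) coincides with the paper's.

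One warning about the step you correctly identify as the technical heart. The inequality $d^{(2)}_{\mathrm{alt}}>\rho_0$ does appear to hold, but it is far tighter than the analogous inequality in the $e=\frac{n^2}{4}-1$ case might suggest. One can check that
\begin{equation*}
\rho_0^{\,2}=\frac{4n^2}{9}-\frac{8}{3}+\frac{4}{n}-\frac{34}{3n^2}+O(n^{-3}),
\qquad
\bigl(d^{(2)}_{\mathrm{alt}}\bigr)^2=\frac{4n^2}{9}-\frac{8}{3}+\frac{4}{n},
\end{equation*}
so the two quantities, each of order $n^2$, agree through their first three asymptotic terms and differ only at order $n^{-2}$; equivalently $P\bigl(d^{(2)}_{\mathrm{alt}}\bigr)\sim \frac{17}{3n}\to 0^{+}$. (Numerically: for $n=6$ one has $\rho_0\approx 3.714$ versus $d^{(2)}_{\mathrm{alt}}=\sqrt{14}\approx 3.742$; for $n=9$, $\rho_0\approx 5.803$ versus $\sqrt{304/9}\approx 5.812$.) So the reduction to a polynomial inequality in $n$ involves cancellation of all the leading terms, and you must also confirm, as you note, that the two remaining roots of $P$ are negative so that $P\bigl(d^{(2)}_{\mathrm{alt}}\bigr)>0$ really certifies $d^{(2)}_{\mathrm{alt}}>\rho_0$. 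The paper's own bounds are comparably tight (its $P(\rho(Q_1))$ also decreases to $0$), so this is not a defect relative to the paper, but the verification cannot be waved through as a routine calculation and must actually be carried out before the proof is complete.
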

\begin{proof}
    Let $G =(V,E)$ be a graph as described above. Consider the partition of $V$ into three parts : $V(G_1), V(G_2)\cup V(G_3)$, and $\{x,y\}.$ The corresponding quotient matrix is $$Q_G = \begin{bmatrix}
        0&\frac{2n}{3}-2&2\\
        \frac{n}{3}&\frac{n}{3}-1&1\\
        \frac{n}{3}&\frac{n}{3}-1&0
    \end{bmatrix}.$$
    Since the partition is equitable, Theorem \ref{thm:eq_sr} implies that $\rho(G)=\rho(Q_G)$, which is the largest root of the polynomial 
    \begin{equation}\label{char poly for e=n^2/3-1}
     P(x) = x^3 +x^2\left(1-\frac{n}{3}\right) + x\left(1-\frac{2n^2}{9}- \frac{n}{3}\right) -\frac{2n^2}{9} +\frac{2n}{3}.   
    \end{equation}
Let $v = (x_1,x_2,x_3)^T$ be the principal eigenvector of $Q_G.$ After solving the following eigenvalue-eigenvector equations
\begin{align*}
       2 \left(\frac{n}{3}-1\right)x_2 + 2x_3 &=\rho(G)x_1\\
        \frac{n}{3}x_1 +\left(\frac{n}{3}-1\right)x_2+x_3&=\rho(G)x_2\\
        \frac{n}{3}x_1 + \left(\frac{n}{3}-1\right)x_2 &=\rho(G)x_3,
    \end{align*}
 we get that
    \begin{equation}\label{eigenvector entries}
       x_2>x_1>x_3. 
    \end{equation}    
    
    Let $G' =(V', E')$ be a minimizer in $\mathcal{G}_{n,e}$. We first show that $\Delta(G')\leq \frac{2n}{3}+1$ and $\delta(G')\geq \frac{2n}{3}-1.$ Suppose there is a $ v\in V'$ with $d(v) = \frac{2n}{3}+k$, $k\geq 2.$ Consider the partition of $V'$ into two parts : $\{v\}, V'\setminus\{v\}.$ The corresponding quotient matrix is
    $$Q_{G'} = \begin{bmatrix}
        0&\frac{2n}{3}+k\\
        \frac{2n+3k}{3(n-1)}&\frac{2n^2-6-6k-4n}{3(n-1)}
    \end{bmatrix}$$ and its characteristic polynomial is 
    \begin{equation}
        g(x) = x\left(x-\left(\frac{2n^2-6-6k-4n}{3(n-1)}\right)\right)-\frac{(\frac{2n}{3}+k)^2}{n-1}.
    \end{equation}
    Consider the function \begin{align*}
        f(x) &= P(x)-xg(x)\\
        &= x^2\left(1-\frac{n}{3}+\frac{2n^2-6-6k-4n}{3(n-1)}\right)+x\left(1-\frac{2n^2}{9}-\frac{n}{3}+\frac{(\frac{2n}{3}+k)^2}{n-1}\right)-\frac{2n^2}{9} + \frac{2n}{3}.
    \end{align*}
Since $g\left(\frac{2n}{3}-\frac{2}{n}\right) = \frac{(kn+2)^2}{n^2(1-n)}<0,$ showing $f(x)>0$ in the interval $[\frac{2n}{3}-\frac{2}{n}, \frac{2n}{3}]$ will prove the claim. We note that $f(x)$ increases with $k$ as the terms involving $k$, $k(3k +2(2n-3x))>0$ for $x\in [\frac{2n}{3}-\frac{2}{n}, \frac{2n}{3}]$. So it is sufficient to work with $k=2.$ When $k=2,$
    $$f(x) = x^2\left(1-\frac{n}{3}+\frac{2n^2-18-4n}{3(n-1)}\right)+x\left(1-\frac{2n^2}{9}-\frac{n}{3}+\frac{(\frac{2n}{3}+2)^2}{n-1}\right)-\frac{2n^2}{9} + \frac{2n}{3},$$
    and $$f'(x)  = \frac{2n^2(3x-n)+3n^2 -126x+36n+27}{9(n-1)}.$$ For $n\geq 10$ and $x\in [\frac{2n}{3}-\frac{2}{n}, \frac{2n}{3}]$, $f'(x)>0$. When $x = \frac{2n}{3}-\frac{2}{n}$, $$f(x) = f\left(\frac{2n}{3}-\frac{2}{n}\right) = \frac{2}{3(n-1)}\left(n +18 -\frac{9}{n}-\frac{42}{n^2}\right)>0,$$ and when $x = \frac{2n}{3}$, $$f(x) = f\left(\frac{2n}{3}\right) = \frac{4n(n+3)}{9(n-1)}>0.$$ This proves that $\rho(G)<\rho(Q_{G'})$ and by eigenvalue interlacing (see Theorem \ref{thm:int_sr}), $\rho(G)<\rho(G').$ Therefore, if $G'$ is a minimizer, then $\Delta(G')\leq \frac{2n}{3}+1.$ The proof for $\delta(G')\geq \frac{2n}{3}-1$ has the similar calculation.\\

    Next we will show that if $G' = (V', E')$ is a minimizer, then it cannot have two or more vertices of degree $\frac{2n}{3}+1.$ Suppose $u,v\in V'$ with $d(u)=d(v) = \frac{2n}{3}+1.$ Consider the partition of $G'$ into two parts : $\{u,v\}, V\setminus\{u,v\}.$ The corresponding quotient matrices are  
    $$Q_1 = \begin{bmatrix}
        0&\frac{2n}{3}+1\\
        \frac{4n+6}{3(n-2)}&\frac{2n^2-8n-18}{3(n-2)}
    \end{bmatrix}, Q_2 =\begin{bmatrix}
        1&\frac{2n}{3}\\
        \frac{4n}{3(n-2)}&\frac{2n^2-8n-12}{3(n-2)}
    \end{bmatrix}$$
for when $u\not\sim v$ and $u\sim v$, respectively. Their respective largest eigenvalues are
$$\rho(Q_1) =\frac{n^2-4n-9+\sqrt{n^4+6n^2+42n+45}}{3(n-2)},$$
and
$$\rho(Q_2) = \frac{2n^2-5n-18 + \sqrt{4n^4-12n^3+33n^2+132n+36}}{6(n-2)}.$$ Plugging these eigenvalues in $P(x)$ we get, $P(\rho(Q_1))$ and $P(\rho(Q_2))$ are monotonically decreasing in $n$ with the limit 0. Since the two roots of $Q_G$ are negative, $P(x)>0$ for all $x\geq \min \{\rho(Q_1), \rho(Q
_2)\}$. Hence we conclude $\rho(Q_1)>\rho(G),$ $   \rho(Q_2)>\rho(G)$, and by eigenvalue interlacing (see Theorem \ref{thm:int_sr}), $\rho(G')>\rho(G).$

Suppose $G'$ has two adjacent vertices $u,v\in V'$ with $d(u)= d(v) = \frac{2n}{3}-1.$
Consider the partition of $G$ into two parts : $\{u,v\}, V\setminus\{u,v\}.$ The corresponding quotient matrix is 
$$Q = \begin{bmatrix}
    1&\frac{2n}{3}-2\\
    \frac{4n-12}{3(n-2)}&\frac{2n^2-8n+12}{3(n-2)}
\end{bmatrix}$$
and its characteristic polynomial is
$$g(x) = (x-1)\left(x-\frac{2n^2-8n+12}{3(n-2)}\right)-2\frac{(\frac{2n}{3}-2)^2}{n-2}$$ 
with the largest root $$\rho(Q) = \frac{2n^2-5n+6+\sqrt{4n^4-12n^3-63n^2+276n-252}}{6(n-2)}.$$
Plugging this eigenvalue in $P(x)$ we get, $P(\rho(Q))$ is monotonically decreasing in $n$ with the limit 0. Therefore $\rho(Q)>\rho(G)$ and by eigenvalue interlacing (see Theorem \ref{thm:int_sr}), $\rho(G')>\rho(G).$ The only remaining case to eliminate is when $G'$ has one vertex of degree $\frac{2n}{3}+1$, $n-4$ vertices of degree $\frac{2n}{3}$, and 3 vertices (say, $u,v,w$) of degree $\frac{2n}{3}-1.$ We proved above degree $\frac{2n}{3}-1$ vertices cannot be adjacent in a minimizer, so $u,v,w$ are pairwise non-adjacent in $G'$. Consider the partition of $G'$ into two parts : $\{u,v,w\}, V\setminus\{u,v, w\}.$ The corresponding quotient matrix is 
$$Q = \begin{bmatrix}
    0&\frac{2n}{3}-1\\
    \frac{2n-3}{n-3}&\frac{2n^2-12n+12}{3(n-3)}
\end{bmatrix}$$
and its characteristic polynomial is $$
        g(x) =x\left(x - \frac{2n^2-12n+12}{3(n-3)}\right) - 3\frac{(\frac{2n}{3}-1)^2}{n-1}$$ with the largest root
        $$\rho(Q) = \frac{n^2-6n+6+\sqrt{n^4-24n^2+63n-45}}{3(n-3)}.$$
 Plugging this eigenvalue in $P(x)$ we get, $P(\rho(Q))$ is monotonically increasing in $n$ with the $P(\rho(Q)) =0.662 $ for $n=6$. Therefore, $\rho(Q)>\rho(G)$ and by eigenvalue interlacing (see Theorem \ref{thm:int_sr}), $\rho(G')>\rho(G).$ This completes the proof.
\end{proof}

\begin{proof}[Proof of Proposition \ref{minimizer for e=n^2/3-1}]
Let $G' = (V',E')$ be a minimizer in $\mathcal{G}_{n,e}$. To prove $G'\cong G$, it is sufficient to show $G'$ contains an independent set of size $\frac{n}{3}$ as proving the remaining $\frac{2n}{3}$ vertices induce a subgraph isomorphic to $\hat{G}$ is similar to Proposition \ref{min for e=n^2/4-1}. Relabel the vertices of $G'$ so that the two vertices in $G'$ of degree $\frac{2n}{3}-1$ coincides with $\{x,y\}\in V.$ By proposition \ref{e=n^2/3-1}, we have $x\not\sim y$ in $G'$, hence $|N(x)\cap N(y)|\geq \frac{n}{3}.$ Let $S\subseteq N(x)\cap N(y)$ of cardinality $\frac{n}{3}. $ Relabel again the vertices of $G'$ other than $x,y$ such that the vertices in $S$ coincides with $V(G_1).$ If $S$ is not an independent set in $G'$, then by Theorem \ref{local switching} and Equation \ref{eigenvector entries}, we get $\rho(G')>\rho(G)$, which is a contradiction. Hence, $S$ is an independent set of size $\frac{n}{3}$ which proves $G' \cong G.$ The claim now follows from Equation \ref{char poly for e=n^2/3-1}.
\end{proof}

\section{Final Remarks}

In this paper, we proved that when $e = \frac{dn}{2}-1$, for $d\in\{2,\frac{n}{2}, \frac{2n}{3}, n-3, n-2, n-1\}$, the minimizer graph is a $d$-regular graph minus an edge. We believe the same is true for any value of $d$. We note that it is not necessarily true that when $e = \frac{dn}{2}+1$ for some $2\leq d\leq n-2$, a minimizer is obtained from a $d$-regular graph by adding an edge. For example,  when $n=7$ and $d=2$, the minimizers graphs in $\mathcal{G}_{n,\frac{dn}{2}+1}$ are in Figure \ref{bicyclic} and neither of them can be obtained by adding an edge to $C_7$.
\begin{figure}[H]
\centering
\begin{tikzpicture}[scale=0.8]\small
  \node[below] at (0,0) {$b$}; 
  \draw (0,0)--(2,0)--(1,1.5)--(0,0);
  \node[below] at (2,0) {$c$};
  \node[above] at (1,1.5) {$a$};
  \draw (2,0)--(4,0)--(6,0)--(8,0)--(7,1.5)--(6,0);
  \foreach \x in {(0,0), (1,1.5), (2,0),(4,0),(6,0),(8,0),(7,1.5)} {
  \draw[fill] \x circle[radius = 0.05cm];}
  \node[below] at (4,0) {$d$};
  \node[below] at (6,0) {$e$};
  \node[below] at (8,0) {$f$};
  \node[above] at (7,1.5) {$g$};
  \end{tikzpicture}
  \hspace{2cm}
  \begin{tikzpicture}[scale=0.8]\small
      \draw (-2,0)--(0,0)--(2,0)--(1,1)--(-1,1)--(-2,0);
      \draw(-2,0)--(-1,-1)--(1,-1)--(2,0);
      \foreach \x in {(-2,0),(0,0),(2,0),(1,1),(-1,1),(-1,-1),(1,-1)}
      {\draw[fill] \x circle[radius = 0.05cm];}
      \node[below] at (0,0) {$d$};
      \node[below] at (-1,-1) {$f$};
      \node[below] at (1,-1) {$g$};
      \node[above] at (1,1) {$b$};
      \node[above] at (-1,1) {$a$};
      \node[left] at (-2,0) {$c$};
      \node[right] at (2,0) {$e$};
  \end{tikzpicture}
  \caption{Bicyclic minimizers on $7$ vertices.}
  \label{bicyclic}
  \end{figure}
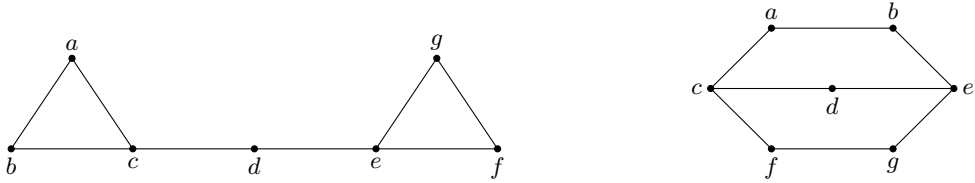

When $n$ does not divide $2e$, Hong's question asks whether the set of graphs in $\mathcal{G}_{n,e}$ with minimum spectral radius is contained in the set of graphs in $\mathcal{G}_{n,e}$ with $Ir(G)=1$ (which is the minimum possible). In general, we note that it is not true that with increase in $Ir(G)$, the spectral radius also increases. For example, consider the graphs in Figure \ref{contradiction}. The graph $G_1$ (on the left) has $\Delta(G_1)-\delta(G_1) = 4$ with $\rho(G_1) = 2.677$ and the graph $G_2$ (on the right) has $\Delta(G_2)-\delta(G_2)=3$ with $\rho(G_2) = 2.852$.


The following more general construction was suggested to us by Noga Alon\footnote{Personal communication to the authors in September 2023.}. Consider two natural numbers $t$ and $n$ such that $t\gg 1$ and $n$ $n\gg t$; (informally, $t$ is large and $n$ is much larger than $t$). Take a $t$-regular graph $H$ of order $n-t-2$ and the complete graph $K_{t+2}$ that are vertex disjoint. Construct the graph $G_1$ by adding an edge between $H$ and $K_{t+2}.$ The graph $G_1$ has size $e =\frac{(t+1)(t+2)}{2} + \frac{t(n-t-2)}{2} +1 $, $Ir(G_1) = 2$, and $\rho(G_1)\geq t+1$. Consider another graph $G_2$ with one vertex $v$ of degree $2t+4$ and all other vertices of degree $t$. Thus, $G_2\in\mathcal{G}_{n,e}$ and $Ir(G_2) = t+4>>2.$ Assume $G_2$ has high girth $2r+1$ which depends on $n$. Let $x$ be the principal eigenvector of $G_2$. By Claim 2.2 in \cite{Alon}, we have $||x||_\infty\leq \frac{1}{\sqrt{r}}.$ We denote by $N(i)$ the set of vertices at distance $i$ from vertex $v.$ Suppose $b^2 = \sum_{i\in N(0)\cup...\cup N(r)}x_i^2.$ Since $G_2$ has girth $2r$, it locally looks like a tree up to $N(r)$ from the vertex $v$.  Therefore,
\begin{equation*}
    \rho(G_2)=x^TA_{G_2}x \leq 2\sqrt{2t+3}b^2 +t\left(1-b^2+\frac{1}{r}\right)\leq t+\frac{t}{r}\leq t + o(1).
\end{equation*}
This shows although $Ir(G_1)<<Ir(G_2)$, but $\rho(G_1)>\rho(G_2)$ for large $n$.
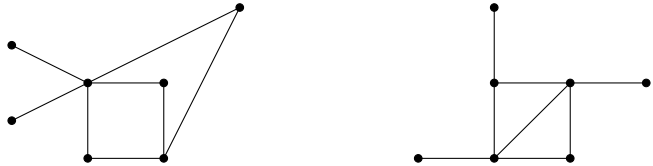
\begin{figure}[h]
\centering
\begin{tikzpicture}
  \foreach \x in {(0,0), (1,1), (2,2),(-1,0.5),(-1,1.5),(0,1),(1,0)} {
\draw[fill] \x circle[radius = 0.05cm];} 
\draw (0,0)--(1,0)--(1,1)--(0,1)--(0,0);  
\draw (0,1)--(2,2)--(1,0);
\draw (-1,0.5)--(0,1)--(-1,1.5);
\end{tikzpicture}
 \hspace{2cm}
\begin{tikzpicture}
 \foreach \x in {(0,0), (1,1), (2,1),(0,2),(-1,0),(0,1),(1,0)} {
\draw[fill] \x circle[radius = 0.05cm];} 
\draw (0,0)--(1,0)--(1,1)--(0,1)--(0,0);
\draw (-1,0)--(0,0)--(1,1)--(2,1);
\draw (0,2)--(0,1);
\end{tikzpicture}
\caption{Graphs $G_1$ (left) and $G_2$ (right) on 7 vertices with 8 edges.}
\label{contradiction}
\end{figure}

\section*{Acknowledgments}
We are grateful to the referees for several useful comments and suggestions. The research of the first author was partially supported by NSF grant DMS-2245556. The research of the second author was supported by the University of Delaware Graduate College through the Doctoral Fellowship for Excellence.
The research of the third author was partially supported by CNPq grant 405552/2023-8 and FAPERJ grant E-20/2022-284573. Part of this research was done during the Workshop on Spectral Graph Theory {\tt http://spectralgraphtheory.org/} that took place in Niteroi, Brazil in October 2023. We are grateful to the organizers and the participants to the workshop for creating a wonderful research atmosphere.

\end{document}